\documentclass[11pt,A4]{article}
\usepackage[applemac]{inputenc}
\usepackage[english]{babel}
\usepackage{amsmath,amsfonts,amssymb,amsthm,mathrsfs,bbm}
\usepackage[font=sf, labelfont={sf,bf}, margin=1cm]{caption}
\usepackage{graphicx,graphics}
\usepackage{epsfig}
\usepackage{latexsym}
 \usepackage{ae,aecompl}
\usepackage{pstricks}
\usepackage{enumerate}
\usepackage{xcolor}
\usepackage{pifont}
\usepackage[pdfpagemode=UseNone,bookmarksopen=false,colorlinks=true,urlcolor=blue,citecolor=blue,citebordercolor=blue,linkcolor=blue]{hyperref}
\pagestyle{headings}
\usepackage[margin=2.5cm]{geometry}
\usepackage{comment}
\usepackage{mathpazo} 
\usepackage{eulervm}
\DeclareGraphicsExtensions{.jpg,.pdf}
\usepackage[normalem]{ulem}
\normalem
\usepackage{enumerate,stmaryrd}
\usepackage{tikz,pgf}						
	\usetikzlibrary{arrows, patterns, calc}		
\usepackage{mathtools}					
\usepackage{microtype,etex}					
\usepackage{tabulary}
\usepackage{booktabs,longtable}

\usepackage{enumitem}

\linespread{1.15}
  \DeclareMathAlphabet{\mathpzc}{OT1}{pzc}{m}{it}

\newtheorem{thm}{Theorem}
\newtheorem{lem}[thm]{Lemma}
\newtheorem{prop}[thm]{Proposition}
\newtheorem{cor}[thm]{Corollary}

\theoremstyle{definition}

\newtheorem{rem}[thm]{Remark}

\renewcommand\Pr[1]{\mathbb{P}\left(#1\right)}

\newcommand\Prmu[1]{\mathbb{P}_{\mu}\left(#1\right)}
\newcommand\Prmuj[1]{\mathbb{P}_{\mu,j}\left(#1\right)}

\newcommand\Es[1]{\mathbb{E}\left[#1\right]}
\newcommand\Esmu[1]{\mathbb{E}_{\mu}\left[#1\right]}
\newcommand\Esmuj[1]{\mathbb{E}_{\mu,j}\left[#1\right]}

\newcommand \fl[1] {\left\lfloor #1 \right\rfloor}

\def \Card {\mathop{{\rm Card } }\nolimits}

\newcommand \Wlex {\W^{\mathsf{lex}}}
\newcommand \Wrev {\W^{\mathsf{rev}}}
\newcommand \Wbfs {\W^{\mathsf{bfs}}}

\def \P {\mathbb{P}}

\def \Pmu {\mathbb{P}_\mu}

\def \Pmuj {\mathbb{P}_{\mu,j}}

\def \N {\mathbb N}

\def \D {\mathbb D}

\def \R {\mathbb R}

\def \D {\mathbb D}

\def \Z {\mathbb Z}

\def \S {\mathcal{S}}
\def \W {\mathcal{W}}
\def \Sb {\overline{\mathcal{S}}}
\def \Znz {\Z/n\Z}
\def \I {\mathcal{I}}

\def \Tca {\mathcal{T}^{\ast}}

\def \V {\textbf{V}}

\def \bx {\textnormal{\textbf{x}}}

\def \exc {\textnormal{exc}}
\def \X {X^\textnormal{exc}}
\def \Xbr {X^\mathrm{br}}
\def \He {H^\textnormal{exc}}
\def \Xe {X^\textnormal{exc}}

\def \e {\epsilon}

\def \br {\textnormal{br}}
\def \GW {\textnormal{BGW}}

\def \t {\mathfrak{t}}

\def \| { \, | \,}

\def \br {\mathrm{br}}

\def \Z {\mathbb {Z}}

\long\def\symbolfootnote[#1]#2{\begingroup%
\def\thefootnote{\fnsymbol{footnote}}\footnote[#1]{#2}\endgroup}

\title{  \vspace {-2cm}\textbf{Sub-exponential tail bounds for \\conditioned stable Bienaymé--Galton--Watson trees}}
\date{}

\DeclareSymbolFont{extraup}{U}{zavm}{m}{n}
\DeclareMathSymbol{\varheart}{\mathalpha}{extraup}{86}
\DeclareMathSymbol{\vardiamond}{\mathalpha}{extraup}{87}

\makeatletter
\renewcommand*{\@fnsymbol}[1]{\ensuremath{\ifcase#1\or  \spadesuit \or \varheart\or \vardiamond \or \clubsuit \or
   \mathsection\or \mathparagraph\or \|\or **\or \dagger\dagger
   \or \ddagger\ddagger \else\@ctrerr\fi}}
\makeatother

\author{Igor Kortchemski\thanks{CNRS \& CMAP, \'Ecole polytechnique, Palaiseau, France. \hfill  \texttt{igor.kortchemski@normalesup.org}} 
\ 
}

\begin{document}

\maketitle

\let\thefootnote\relax\footnotetext{  \\ \emph{The  author acknowledges partial support from Agence Nationale de la Recherche, grant number ANR-14-CE25-0014 (ANR GRAAL), and from the City of Paris, grant “Emergences Paris 2013, Combinatoire à Paris”.}
\\ 

\emph{MSC2010 subject classifications}. Primary 60J80,05C05,05C07; secondary 60F05,60G52. \\
 \emph{Keywords and phrases.} Random trees, Bienaymé--Galton--Watson trees, spectrally positive stable Lévy processes, non-crossing trees.}
 
\vspace {-0.5cm}

\begin{abstract} 
We establish uniform sub-exponential tail bounds for the width, height and maximal outdegree of critical Bienaymé--Galton--Watson trees conditioned on having a large fixed size, whose offspring distribution  belongs to the domain of attraction of a stable law. This extends results obtained for the height and width by Addario-Berry, Devroye \& Janson in the finite variance case.
\end{abstract}

\section{Introduction}

We are interested in the geometric structure of large Bienaymé--Galton--Watson trees, which are an important well-studied class of random trees in probability theory. They arise as building blocks of many different models of random graphs, such as Erd\H{o}s--R\'enyi random graphs or random maps, and appear in combinatorics under the term of simply-generated trees. 
Addario-Berry, Devroye \& Janson \cite{ADS13} established sub-Gaussian tail bounds for the width and height of critical finite-variance Bienaymé--Galton--Watson trees conditioned on having a fixed size, these bounds being uniform in the size. 
Such uniform bounds are often challenging to prove, and are useful tools to establish scaling limits for various families of random graphs, see for example \cite{Ray15,CHK14,BM14,PSK14,Stu15}.
Our goal is to obtain similar sub-exponential bounds for the width, height and also maximal outdegree of critical Bienaymé--Galton--Watson trees conditioned on having a fixed size, but whose offspring distribution belongs to the domain of attraction of a stable law and may have infinite variance. 
We believe that our results should find applications in the study of scaling limits of random graphs exhibiting heavy tail phenomena (see e.g.~\cite{BHS15}).
 	
	Bienaymé--Galton--Watson trees belonging to domains of attraction of stable laws have recently appeared in a number of two-dimensional statistical physics models, in connection with scaling limits of random maps with large faces \cite{LGM11}, scaling limits of critical site-percolation clusters on infinite random triangulations \cite{CK13+} via looptrees \cite{CK14} and Liouville quantum gravity \cite{DMS14}.

\paragraph{Stable Bienaymé--Galton--Watson trees.} We consider  critical offspring distributions belonging to domains of attraction of stable laws. Specifically, we fix a parameter $\alpha \in (1,2]$ and let $\mu= (\mu(j); j \geq 0)$ be a probability distribution on the nonnegative integers satisfying the following two conditions:
\begin{enumerate}
\item[(i)] $\mu$ is critical, meaning that $ \displaystyle \sum_{j=0}^\infty j \mu(j) = 1$.
\item[(ii)]  $\mu
$ belongs to the domain of attraction of a stable law of index $\alpha
\in (1,2]$.
\end{enumerate}
By \cite[Theorem XVII.5.2]{Fel71}, assertion (ii) means that if $X$ is a random variable with distribution $\mu$, then $\textrm{Var}(X \cdot \mathbb{1}_{X \leq n})= n^{2-\alpha} L(n)$, where $ L: \R_+ \rightarrow \R_+$ is a function such that $\lim_{x
\rightarrow \infty} L(tx)/ L(x)=1$ for all $t>0$ (such a function is
called slowly varying).  Equivalently, either the variance of $\mu$ is finite, or $n^{\alpha}\mu([n,\infty))$  is another slowly varying function (see \cite[Eq.~(5.16) and Theorem 2 in Sec.~XVII.5]{Fel71}).

In addition, we always implicitly suppose that $ \mu(0)+\mu(1)<1$ to avoid degenerate cases, and always assume that $ \mu$ is aperiodic, in the sense that the additive subgroup of the integers $\mathbb{Z}$ spanned by $\{j;
\, \mu(j) \neq 0 \}$ is $\mathbb{Z}$. We let $ \Pmu$ denote the law of a (plane, rooted) Bienaymé--Galton--Watson tree with offspring distribution $\mu$, and $|\tau|$ be total number of vertices, or size, of a tree $\tau$. For every $n \geq 1$ such that $ \Prmu { |\tau|=n}>0$, $ \t_n$ will denote a $ \GW_ \mu$  tree conditioned on having $n$ vertices. The aperiodicity of $ \mu$ guarantees that $ \Prmu { |\tau|=n}>0$ for every $n$ sufficiently large (our results carry out to the periodic case with mild modifications, and we only focus on the aperiodic case for simplicity).

\paragraph{Asymptotic behavior of stable Bienaymé--Galton--Watson trees.} The asymptotic behavior of $\t_{n}$ is well understood, in particular through scaling limits of different functions coding $\t_{n}$. Specifically, if $u(0), u(1), \ldots, u(n-1)$ denote the vertices of $\t_{n}$ listed in lexicographical order (see Sec.~\ref{sec:GW} for precise definitions), define the height function $(\mathcal{H}_{i}(\t_{n}); 0 \leq i \leq n)$ by letting $\mathcal{H}_{i}(\t_{n})$ be the generation of $u(i)$ in $\t_{n}$ for $0 \leq i \leq n-1$ and setting $ \mathcal{H}_{n}(\t_{n})=0$ by convention. Define also the {\L}ukasiewicz path $(\mathcal{W}_{i}(\t_{n}); 0 \leq i \leq n)$ of $\t_{n}$ by setting $\mathcal{W}_{0}(\t_{n})=0$ and, for $0 \leq i \leq n-1$, $\mathcal{W}_{i+1}(\t_{n})-\mathcal{W}_{i}(\t_{n})+1$ to be the outdegree (i.e.~number of children) of $u(i)$.  Let $(B_{n})_{n \geq 1}$ be an increasing sequence such that if $(X_{i})_{i \geq 1}$ is a sequence of i.i.d.~random variables with distribution $\mu$, $(X_{1}+X_{2}+ \cdots+X_{n}-n)/B_{n}$ converges in distribution as $n \rightarrow  \infty$ to a random variable $Y_{\alpha}$ with Laplace exponent given by $\Es{ \exp(- \lambda Y_{\alpha})}= \exp(\lambda ^ \alpha)$ for every $\lambda>0$ (\cite[Sec.~XVII.5]{Fel71} garanties its existence). 
Duquesne \cite{Du03} (see also \cite {Kor13}) showed that the convergence
\begin{equation}
\label{eq:D}\left( \frac{1}{B_n} \cdot  \W_ { \fl {nt}}( \t_n),  \frac{n}{B_{n}} \cdot \mathcal{H}_{ \fl{nt}}(\t_{n}) \right) _{0 \leq t \leq 1}  \quad\mathop{\longrightarrow}^ {(d)}_{n \rightarrow \infty} \quad \left( \Xe_{t},\He_{t} \right)_{0 \leq t \leq 1}
\end{equation}
holds in distribution in $\D([0,1], \R)^{2}$, where $\D([0,1], \R)$ is the space of real-valued càdlàg functions on $[0,1]$ equipped with the Skorokhod $J_{1}$ topology, where $\Xe$ is the normalized excursion of a spectrally positive strictly stable Lévy process of index $\alpha$ and $\He$ its associated continuous height function (we refer to  \cite{Du03} for their construction as we will not use them) which codes the $\alpha$-stable Lévy tree introduced by Le Gall \& Le Jan \cite{LGLJ98}.  In the particular case $\alpha=2$, we have  $(\Xe,\He)= (\sqrt{2}\cdot \mathbbm{e}, \sqrt{2} \cdot\mathbbm{e})$, where $\mathbbm{e}$ is the normalized Brownian excursion. The scaling factor $B_{n}$ is of order $n^{1/\alpha}$ (more precisely, $B_{n}/n^{1/\alpha}$ is slowly varying), and  one may take $B_{n}= \sigma  \sqrt{n/2}$ when $\mu$ has finite variance $\sigma^{2}$.

\paragraph{Uniform bounds on the width and height of stable Bienaymé--Galton--Watson trees.}

If $\tau$ is a (plane, rooted) tree and $k \geq 0$, we denote by $Z_{k}(\tau)$ the number of vertices of $\tau$ at generation $k$, and let
$$W(\tau)= \sup \{Z_{k}(\tau);  \ k \geq 0\}, \qquad  H(\tau)= \sup \{k; \ Z_{k}(\tau)>0\} $$
be respectively the width and height of $\tau$. We also denote by $\Delta(\tau)$ the maximum outdegree of $\tau$. To simplify notation, we will sometimes write $Z_{k},W,H$ instead of respectively $Z_{k}(\tau), W(\tau)$ and $H(\tau)$. Let $\Delta^*( \X)= \sup_ {0 < t \leq 1} ( \X_t- \X_ {t-})$ be the maximum jump of $\Xe$. Since the maximum jump of $ \mathcal{W}(\t_{n})$ is equal to $\Delta(\t_{n})-1$ and the largest jump is a continuous functional on $\D([0,1], \R)$ (see e.g.~\cite[Prop.~2.4 in Chap.~VI]{JS03}; we emphasize that we are always working with the $J_{1}$ topology), \eqref{eq:D} immediately implies that
 $$ \left( \frac{1}{B_{n}} \cdot \Delta(\t_{n}), \frac{n}{B_{n}} \cdot  {H}(\t_{n}) \right)   \quad \mathop{\longrightarrow}^{(d)}_{n \rightarrow \infty} \quad  (\Delta^*( \X), \sup \He).$$
It is also plausible that this convergence holds jointly with that of $ W(\t_{n})/B_{n}$ to a positive random variable, see \cite{Ker} (unfortunately, we have not managed to found a published reference of this fact). As a consequence, for every $u \geq 0$, the quantities $ \Pr{H(\t_{n}) \geq  un /B_{n}}$, $ \Pr{W(\t_{n}) \geq u B_{n}}$ and $ \Pr{\Delta(\t_{n}) \geq u B_{n}}$ should converge as $n \rightarrow \infty$ to functions of $u$ that tend to $0$ as $u \rightarrow \infty$. It is therefore natural to ask if it is possible to bound  $ \Pr{H(\t_{n}) \geq  un /B_{n}}$, $ \Pr{W(\t_{n}) \geq u B_{n}}$ and $ \Pr{\Delta(\t_{n}) \geq u B_{n}}$ by functions of $u$ which do \emph{not} depend on $n$.

In the case where $\mu$ is critical and has finite positive variance, such bounds have been established by Addario-Berry, Devroye \& Janson \cite[Theorems 1.1 and 1.2]{ADS13}, who show the existence of constants $C_{1},c_{1}>0$ (depending only on $\mu$) such that the inequalities
$$ \Pr{H(\t_{n}) \geq u \sqrt{n}} \leq C_{1} e^{-c_{1} u^{2}}, \qquad \Pr{W(\t_{n}) \geq u \sqrt{n}} \leq C_{1} e^{-c_{1} u^{2}}$$
hold for every $n \geq 1$ and $u \geq 0$. Addario-Berry \cite{Add12} establishes similar bounds for uniform random trees with a given outdegree sequence satisfying a ``finite variance'' type condition. When $\mu$ is critical and belongs to the domain of attraction of a stable law,  Haas \& Miermont \cite[Lemma 33]{HM12} show that for every $p>0$, there exists a constant $C_{p}>0$ such that
$$ \Pr{H(\t_{n}) \geq  \frac{u n}{B_{n}}} \leq  \frac{C_{p}}{u^{p}}$$
for every $n \geq 1$ and $u \geq 1$ (this reference actually treats the more general case of so-called Markov-branching trees).

We are now in position to state our main results. Recall that $\t_{n}$ denotes a Bienaymé--Galton--Watson tree with a critical offspring distribution in the domain of attraction of a stable law of index $\alpha \in (1,2]$, conditioned on having $n$ vertices.

\begin{thm}[Bounds for the width] \label{thm:width}For every $ \gamma \in (0, \alpha/ ( \alpha-1))$, there exist positive constants $C_1,C_2>0$ such that for every $ u \geq 0$ and every $n \geq 1$:
$$ \Pr { W( \t_n) \geq u B_n} \leq C_1 \exp (- C_2 u^ { \gamma}).$$
\end{thm}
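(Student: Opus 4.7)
\emph{Proof plan.} Since $\Prmu{|\tau|=n} \sim c/(nB_n)$ by the Otter--Dwass identity $\Prmu{|\tau|=n} = \frac{1}{n} \Prmu{\W_n = -1}$ combined with Gnedenko's local limit theorem for the random walk $\W$ with step distribution $X-1$, $X \sim \mu$, it suffices to prove that $\Prmu{W(\tau) \geq u B_n,\ |\tau|=n} \leq C(nB_n)^{-1} \exp(-c\, u^\gamma)$. The plan is to take a union bound over the generation $k$ at which the maximal width is attained; by the Haas--Miermont polynomial height estimate quoted above, the sum may be restricted to $k \leq K\, n/B_n$ modulo a contribution decaying polynomially fast in $u$, so it remains to estimate $\Prmu{Z_k(\tau) \geq u B_n,\ |\tau|=n}$ uniformly over such $k$.

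\medskip

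For each $k$ I would use the ``cut at generation $k$'' decomposition: conditionally on the first $k$ generations $\tau_{\leq k}$, the $Z_k(\tau)$ subtrees grafted at level $k$ are i.i.d.\ copies of $\tau$. Writing $|\tau| = |\tau_{\leq k}| + \sum_{i=1}^{Z_k(\tau)} |\tau^{(i)}|$ and summing over the top forest yields
\[
\Prmu{Z_k(\tau) = j,\ |\tau|=n} \ \leq\  \Prmu{Z_k(\tau) = j} \,\cdot\, \sup_{0 \leq N \leq n} \Prmu{|\tau^{(1)}| + \cdots + |\tau^{(j)}| = N}.
\]
Since $\Prmu{|\tau|=n} \sim c\, n^{-(1+1/\alpha)}$, the integer variable $|\tau|$ belongs to the domain of attraction of a positive $1/\alpha$-stable law, whose density $g$ has the sharp stretched-exponential behaviour $g(x) \sim c_1\, x^{-\delta_1} \exp(-c_2\, x^{-1/(\alpha-1)})$ as $x \to 0^+$. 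A quantitative Gnedenko-type local limit theorem for heavy-tailed sums then yields, uniformly for $j \geq u B_n$ and $0 \leq N \leq n$,
\[
\Prmu{|\tau^{(1)}| + \cdots + |\tau^{(j)}| = N}\  \leq\  \frac{C}{j^\alpha} \exp\!\left(-c\, (j^\alpha/N)^{1/(\alpha-1)}\right).
\]
For $j \geq u B_n$ and $N \leq n$ one has $j^\alpha / N \geq u^\alpha$, producing the key exponent $c\, u^{\alpha/(\alpha-1)}$ on the right-hand side.

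\medskip

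Combining these two estimates, summing over $j \geq u B_n$ and then over $k \leq K\, n/B_n$, and using the unconditional tail bound $\Prmu{Z_k(\tau) \geq m} \leq C \min\bigl(k/m^\alpha,\, k^{-1/(\alpha-1)}\bigr)$---which balances the ``one big ancestor'' contribution $k\, \mu([m,\infty)) \asymp k/m^\alpha$ against the Kolmogorov--Yaglom survival probability $\Prmu{Z_k>0} \sim c\, k^{-1/(\alpha-1)}$---produces the announced sub-exponential bound, after absorbing polynomial prefactors in $u$ and in powers of $B_n$ into the exponential factor $e^{-c\, u^{\alpha/(\alpha-1)}}$ at the cost of slightly weakening the constant $c$. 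This absorption is precisely what prevents the exponent from reaching the critical value $\alpha/(\alpha-1)$, and explains why $\gamma$ must be taken strictly less than $\alpha/(\alpha-1)$. The main obstacle is the quantitative local limit theorem in the second display: the standard Gnedenko bulk asymptotic $j^{-\alpha} g(N/j^\alpha)$ must be upgraded to a \emph{uniform upper bound} also in the left-tail regime $N \ll j^\alpha$, where the true probability is strongly suppressed by the stretched-exponential tail of $g$ at the origin. Establishing this calls either for a saddle-point analysis of the Laplace transform of $|\tau|$, or for a careful iterative control of the generating function $F(s) = \Esmu{s^{|\tau|}}$, which satisfies the functional equation $F(s) = s\, f(F(s))$ with $f$ the offspring generating function of $\mu$.
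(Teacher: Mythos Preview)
Your approach is genuinely different from the paper's and, as written, has a real gap.

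\textbf{What the paper does.} The paper never decomposes by generation. Instead it uses the breadth-first coding $\Wbfs(\t_n)$, which satisfies $W(\t_n)\leq \max\Wbfs(\t_n)+1$ and is distributed as the random walk $(W_0,\dots,W_n)$ conditioned on $\zeta_1=n$. The width bound thus reduces to showing $\Pr{\max_{i\le n} W_i\geq uB_n\mid\zeta_1=n}\leq C_1e^{-C_2u^\delta}$. This is done by the Vervaat transform (passing from $\zeta_1=n$ to $W_n=-1$), an argument from \cite{Add12} that a large range forces a deep minimum on some half-interval, the local limit theorem to remove the conditioning on the complementary half, and finally Doob's inequality applied to the exponential submartingale $(e^{-hW_i})$ with $h=u^\eta/B_n$, $\eta<1/(\alpha-1)$. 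The spectral positivity of the walk is what makes the exponential submartingale argument go through and is really the heart of the matter; no height input is needed.

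\textbf{The gap in your plan.} Your truncation step ``restrict to $k\leq K\,n/B_n$ modulo a contribution decaying polynomially fast in $u$'' is fatal as stated: the target bound is sub-exponential, so a polynomial remainder from the Haas--Miermont height estimate cannot be absorbed. If you try to rescue this by letting $K$ grow with $u$ (say $K=e^{u^{\gamma'}}$), then the sum over $k$ acquires $e^{u^{\gamma'}}$ many terms and you must beat this with the per-term exponential; this is delicate and you have not carried it out. More seriously, the second ingredient you flag as ``the main obstacle'' --- a \emph{uniform} upper bound $\Pr{|\tau^{(1)}|+\cdots+|\tau^{(j)}|=N}\leq Cj^{-\alpha}\exp(-c(j^\alpha/N)^{1/(\alpha-1)})$ valid down into the small-deviation regime $N\ll j^\alpha$ --- is not a standard local limit theorem and you do not prove it. (The paper does control such forest-size probabilities, but only via the crude local bound $\Psi_j(k)\leq C/B'_j$, which carries no stretched-exponential factor; the exponential decay in the paper comes entirely from the random-walk side.) Finally, your route would make the width bound depend on a height bound, whereas in the paper the logic runs the other way: the coding-path bound is proved first and is then an input to the height bound.
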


The exponent $\alpha/(\alpha-1)$ is optimal. We will see this by explicitly calculating the tail of the supremum of the stable bridge (Theorem \ref{thm:Xbr}) and evaluating its asymptotic behavior (Corollary \ref{cor:Xbr}), which are results of independent interest. See also \cite{VVF07} for a study of the width of (non-conditioned) stable Bienaymé--Galton--Watson trees.

\begin{thm}[Bounds for the height]\label{thm:height}For every $ \delta \in (0, \alpha)$, there exist positive constants $C_1,C_2>0$ such that for every $ u \geq 0$ and every $n \geq 1$:
$$ \Pr { H( \t_n) \geq  u \cdot  \frac{n}{B_{n}}} \leq C_1 \exp (- C_2 u^ \delta).$$
\end{thm}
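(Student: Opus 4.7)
\noindent\emph{Plan.} I would use a layer decomposition of $\t_n$ at generation $h$, combined with sub-exponential estimates paralleling Theorem~\ref{thm:width}. The starting point is the identity $\{H(\t_n)\geq h\}=\{Z_h(\t_n)\geq 1\}$. Let $\tau^{\leq h}$ denote the sub-tree of $\t_n$ spanned by its vertices at generations $\{0,1,\dots,h\}$, and set $m=|\tau^{\leq h}|$, $k=Z_h(\t_n)$. Conditionally on $\tau^{\leq h}$, the $k$ sub-trees rooted at its generation-$h$ vertices are $k$ i.i.d.~$\mu$-BGW trees of total size $n-m+k$, so the Otter--Dwass forest identity yields $\Pmu(|\tau|=n\mid\tau^{\leq h})=\frac{k}{n-m+k}\Pr{S_{n-m+k}=-k}$, where $(S_j)_{j\geq 0}$ is the random walk with step distribution $\mu-1$.

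Combined with Kemperman's formula $\Pmu(|\tau|=n)=\frac{1}{n}\Pr{S_n=-1}$ and the uniform stable local limit estimate $\Pr{S_j=-k}\leq C/B_j$, this gives
$$\Pr{H(\t_n)\geq h}\leq C\,n\,B_n\,\Esmu{\mathbbm 1_{Z_h(\tau)\geq 1}\,\frac{Z_h(\tau)}{(n-|\tau^{\leq h}|+Z_h(\tau))\,B_{n-|\tau^{\leq h}|+Z_h(\tau)}}},$$
an expectation under the \emph{unconditioned} $\mu$-BGW law. By Slack's theorem, $\Pmu(Z_h\geq 1)\sim c/h^{1/(\alpha-1)}$, and by the $\alpha$-stable CSBP scaling limit, on $\{Z_h\geq 1\}$ one typically has $Z_h(\tau)\asymp h^{1/(\alpha-1)}$ and $|\tau^{\leq h}|\asymp h^{\alpha/(\alpha-1)}$. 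For $h=un/B_n$, this typical top mass is of order $u^{\alpha/(\alpha-1)}\cdot n$, which exceeds $n$ as soon as $u$ is large; the constraint $|\tau^{\leq h}|\leq n$ imposed by $|\tau|=n$ is then highly atypical, and this is precisely the mechanism producing the sub-exponential decay in $u$.

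To make this quantitative, I would establish a left-tail estimate of the form $\Pmu(|\tau^{\leq h}|\leq y\cdot h^{\alpha/(\alpha-1)},\,Z_h\geq 1)\leq C\exp(-c(1/y)^\delta)\cdot h^{-1/(\alpha-1)}$ for $y\in(0,1]$ and any $\delta<\alpha$, which should follow via the spectrally positive $\alpha$-stable L\'evy process coding the L\'evy tree and the representation of $|\tau^{\leq h}|$ as an integral of the population process; then, plugging $y=n/h^{\alpha/(\alpha-1)}=u^{-\alpha/(\alpha-1)}$ into the expectation above and controlling the polynomial prefactors $nB_n$ and $1/B_{n-m+k}$ yields the claimed bound. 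The main obstacle is the \emph{left}-tail estimate for $|\tau^{\leq h}|$ on $\{Z_h\geq 1\}$: it does not follow from Theorem~\ref{thm:width} (which is an \emph{upper}-tail width bound), and it is precisely the slack lost in absorbing the polynomial factors $nB_n$ into the sub-exponential that forces the restriction $\delta<\alpha$ rather than the conjecturally optimal $\delta<\alpha/(\alpha-1)$.
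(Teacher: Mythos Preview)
Your layer decomposition and Otter--Dwass reduction to an unconditioned expectation is a natural starting point, but the proposal stops short of a proof: the crucial left-tail estimate $\Pmu\bigl(|\tau^{\leq h}|\leq y\,h^{\alpha/(\alpha-1)},\ Z_h\geq 1\bigr)\leq C\exp(-c\,y^{-\delta})\,h^{-1/(\alpha-1)}$ is asserted but never established, and you yourself flag it as ``the main obstacle''. After size-biasing, $|\tau^{\leq h}|$ is a spine of length $h$ decorated by independent height-truncated $\GW_\mu$ trees, a random sum whose small-value behaviour you would have to analyse explicitly; ``should follow via the L\'evy tree coding'' is a scaling-limit heuristic and cannot by itself yield bounds uniform in $n$. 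A second, smaller gap: in your displayed inequality the integrand $Z_h/\bigl((n-m+Z_h)B_{n-m+Z_h}\bigr)$ is unbounded when $m$ is close to $n$ and $Z_h$ is small, and you do not say how this boundary contribution is absorbed against the large prefactor $nB_n$. Finally, your closing remark is mistaken: by \eqref{eq:Hinfini}, $\Pr{\sup\He>u}$ decays like $\exp(-c\,u^{\alpha})$, so $\alpha$ \emph{is} the sharp exponent for the height; the exponent $\alpha/(\alpha-1)$ governs the width.

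For comparison, the paper takes a different route. Rather than $|\tau^{\leq h}|$, it works with $M(z)$, the number of vertices branching off the ancestral line $\llbracket\emptyset,z\llbracket$ of a vertex $z$ at height $h=un/B_n$. The deterministic bound $M(z)\leq \max\Wlex+\max\Wrev$ disposes of the event $\{\max_z M(z)>u^{\delta-1}B_n\}$ via Theorem~\ref{thm:coding}. On the complement one size-biases to the Kesten tree $\Tca$ with spine $(\mathsf U^*_k)$, and the key device is to \emph{split the spine at height $h/2$}: the upper half supplies $Z^{\ast}_{h/2}$ (with $\Es{1/Z^{\ast}_{h/2}}=\Prmu{Z_{h/2}>0}$), while the lower half supplies $M^{\ast}_{h/2}$ independent $\GW_\mu$ trees, enough to apply the local estimate \eqref{eq:psi} and absorb the conditioning $\{|\tau|=n\}$. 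The remaining lower tail $\Pr{M^{\ast}_{un/(2B_n)}\leq u^{\delta-1}B_n}$ concerns a sum of i.i.d.\ size-biased increments and is handled directly by an exponential Markov inequality (Lemma~\ref{lem:inflame}) --- a much more tractable object than your $|\tau^{\leq h}|$. The spine split is precisely what manufactures independence between the generation-size factor and the local-limit factor; your single-layer decomposition has no analogous mechanism, which is why the hard estimate in your plan remains open.
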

Here, the exponent $\alpha$ is optimal. Indeed, for every $u \geq 0$, we have $ \Pr { H( \t_n) \geq  un/B_{n} } \rightarrow  \Pr{ \He \geq u}$ as $n \rightarrow  \infty$ by \eqref{eq:D}, and  it is shown in \cite[Theorem 1.5]{DW15} that
\begin{equation}
\label{eq:Hinfini} \Pr{ \sup \He >u}  \quad \mathop{\sim}_{u \rightarrow \infty} \quad \beta \cdot u^{1+ \frac{\alpha}{2}} e^{- (\alpha-1)^{ \frac{1}{\alpha-1}} u^{\alpha}},
\end{equation}
where $\beta>0$ is a positive constant depending only on $\alpha$.

As noted in \cite{ADS13}, since $H(\t_{n}) W(\t_{n}) \geq n-1$, the previous results also yield, for every  $ \gamma \in (0, \alpha/ ( \alpha-1))$ and $ \delta \in (0, \alpha)$, the existence of constants $C_{1},C_{2}>0$ such that
$$ \Pr { W( \t_n) \leq \frac{B_{n}}{u}} \leq C_1 \exp \left( - C_2 {u^ { \delta}} \right)  \qquad  \textrm{and} \qquad  \Pr { H( \t_n) \leq   \frac{1}{u} \cdot  \frac{n}{B_{n}}} \leq C_1 \exp  \left( - C_2 {u^ \gamma} \right)$$
for every $u > 0$ and $n \geq 1$. We believe that the exponent $\alpha$ is optimal for the first inequality. Also, the exponent $\alpha/(\alpha-1)$ is optimal for the second inequality, since, by \cite[Theorem 1.8]{DW15},
\begin{equation}
\label{eq:Hzero}\Pr{ \sup \He < \frac{1}{u}} \quad \mathop{\sim}_{u \rightarrow \infty} \quad \lambda \cdot \frac{1}{u^{\alpha+2+ \frac{1}{\alpha-1}}} \exp \left(  - \left(  \frac{\pi/\alpha}{\sin(\pi/\alpha)}\right) ^{ \frac{\alpha}{\alpha-1}} \cdot {u^{ \frac{\alpha}{\alpha-1}}} \right),
\end{equation} 
where $\lambda>0$ is a positive constant depending only on $\alpha$.

\paragraph{Application to the maximal outdegree of stable Bienaymé--Galton--Watson forests.} By using Theorem \ref{thm:width}, we establish the following result.

\begin{thm}[Bounds for the maximal outdegree]\label{thm:degree}For every $ \delta \in (0, \alpha/ ( \alpha-1))$, there exist positive constants $C_1,C_2>0$ such that for every $ u \geq 0$ and every $n \geq 1$:
$$ \Pr { \Delta( \t_n) \geq u B_n} \leq C_1 \exp (- C_2 u^ { \delta}).$$
\end{thm}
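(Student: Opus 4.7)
The plan is a one-step reduction to Theorem \ref{thm:width}, exploiting the deterministic comparison $\Delta(\tau) \leq W(\tau)$ valid for every plane tree $\tau$. To justify this inequality, pick any vertex $v$ of $\tau$ whose outdegree equals $\Delta(\tau)$, and let $k$ denote its generation; then the $\Delta(\tau)$ children of $v$ all lie at generation $k+1$, so $Z_{k+1}(\tau) \geq \Delta(\tau)$, and hence $W(\tau) = \sup_{j \geq 0} Z_j(\tau) \geq \Delta(\tau)$.

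Combining this pointwise bound with Theorem \ref{thm:width} immediately yields, for every fixed $\delta \in (0, \alpha/(\alpha-1))$,
$$\Pr{\Delta(\t_n) \geq u B_n} \,\leq\, \Pr{W(\t_n) \geq u B_n} \,\leq\, C_1 \exp(-C_2 u^\delta),$$
valid for every $u \geq 0$ and $n \geq 1$, where $C_1,C_2>0$ are the constants supplied by Theorem \ref{thm:width} with the choice $\gamma = \delta$.

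There is no genuine obstacle here: all of the analytic difficulty has already been absorbed into the proof of Theorem \ref{thm:width} (via the explicit tail estimate for the supremum of the stable bridge), and the only extra ingredient is the trivial inequality $\Delta \leq W$. A pleasant feature of this reduction is that it preserves the full admissible range $(0,\alpha/(\alpha-1))$ of exponents from Theorem \ref{thm:width}, which is consistent with the expected tail behaviour of the largest jump of the normalized spectrally positive $\alpha$-stable excursion, so no exponent is lost along the way.
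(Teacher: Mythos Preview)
Your proof is correct and essentially the same approach as the paper's: both reduce the maximal outdegree bound to the already-established supremum bound via a trivial deterministic inequality. The only cosmetic difference is that the paper uses $\Delta(\t_n) \leq \max \W^{\mathsf{lex}}(\t_n)+1$ and invokes Theorem~\ref{thm:borneW} directly, whereas you use $\Delta(\t_n)\leq W(\t_n)$ and invoke Theorem~\ref{thm:width}; since Theorem~\ref{thm:width} is itself an immediate corollary of Theorem~\ref{thm:coding}/\ref{thm:borneW}, the two routes are equivalent.
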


Here we believe that the exponent $\alpha/(\alpha-1)$ is also optimal. See \cite{Pak98,RaYa99,Ber11,Ber13} for results concerning the maximal degree of stable Bienaymé--Galton--Watson trees conditioned on non-extinction at high generation or for the maximal degree of forests.

In addition, we establish the following bounds, which are sharper when $\mu$ has finite variance and which also apply to forests of Bienaymé--Galton--Watson trees. For $j \geq 1$, denote by $\Pmuj$ the law of $j$ independent $\GW_{\mu}$ trees.
\begin {thm}[Bounds for the maximal outdegree of a forest]\label {thm:degree2}For every $M>0$, there exist  constants $C_{1},C_{2} >0$ such that, for every $n,k \geq 1$,
\begin{equation}
\label{eq:bd1}  \sup_{1 \leq j \leq M B_{n} }\Prmuj{ \Delta(F) \leq  k \ \big| \ |F|=n} \leq C_{1} \exp  \left( - C_{2} n \mu([k+1,\infty)) \right)\end{equation}
and
\begin{equation}
\label{eq:bd2} \sup_{1 \leq j \leq M B_{n}} \Prmuj{ \Delta(F) \geq   k \ \big| \ |F|=n}  \leq C_{1}  n \mu([k,\infty))
\end{equation}
\end {thm}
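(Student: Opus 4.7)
The plan is to reduce both bounds to estimates on a random walk via the {\L}ukasiewicz encoding of a forest and the Kemperman/cycle lemma. With $(X_i)_{i\geq 1}$ i.i.d.\ of law $\mu$ and $S_n = X_1+\cdots+X_n-n$, since both events $\{\Delta(F) \leq k\}$ and $\{\Delta(F) \geq k\}$ depend only on the multiset of outdegrees and are therefore invariant under cyclic shifts of the encoding, the cycle lemma yields
\begin{equation*}
\Prmuj{\Delta(F) \leq k \mid |F|=n} = \Pr{\max_{1\leq i\leq n} X_i \leq k \mid S_n=-j}
\end{equation*}
and the analogous identity with the inequalities reversed. The other main tool will be Gnedenko's local limit theorem under assumption (ii): it provides a uniform upper bound $\Pr{S_n = m} \leq C/B_n$ and, combined with continuity and strict positivity of the $\alpha$-stable density $p_\alpha$ on the compact interval $[-M,0]$, the matching lower bound $\Pr{S_n=-j} \geq c/B_n$ for all $1\leq j \leq MB_n$ and $n$ large.

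For \eqref{eq:bd2}, I would apply a union bound and exchangeability, and decompose according to the first step of the walk:
\begin{equation*}
\Pr{\max_i X_i \geq k,\; S_n=-j} \;\leq\; n\sum_{\ell\geq k}\mu(\ell)\,\Pr{S_{n-1}=-j-(\ell-1)} \;\leq\; \frac{Cn}{B_n}\,\mu([k,\infty)),
\end{equation*}
using the Gnedenko upper bound on $\Pr{S_{n-1}=\cdot}$ and $B_{n-1}\sim B_n$. Dividing by the lower bound $\Pr{S_n=-j}\geq c/B_n$ yields \eqref{eq:bd2}.

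For \eqref{eq:bd1}, set $a_k = \mu([0,k])$, $q_k = 1-a_k$, and let $\tilde S_n = \sum_i (\tilde X_i - 1)$ where the $\tilde X_i$ are i.i.d.\ with the law of $X$ conditioned on $\{X \leq k\}$. A direct factorisation gives
\begin{equation*}
\Pr{\max_i X_i \leq k \mid S_n=-j} \;=\; \frac{a_k^n\,\Pr{\tilde S_n = -j}}{\Pr{S_n=-j}} \;\leq\; e^{-nq_k}\cdot \frac{\Pr{\tilde S_n = -j}}{\Pr{S_n=-j}},
\end{equation*}
reducing the question to bounding the ratio by $C_1 e^{(1-C_2) n q_k}$. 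I would split on the size of $k$: (a) if $nq_k$ is bounded by an absolute constant, the target bound exceeds $1$ for $C_1$ large enough and the claim is trivial; (b) if $k$ stays below a fixed threshold $K_1$, then $\tilde m_k - 1 = -(\mathbb{E}[X;X>k]-q_k)/(1-q_k)$ is bounded away from $0$, producing a uniformly negative drift, and Cram\'er's theorem yields $\Pr{\tilde S_n=-j} \leq e^{-cn}$ for $j=o(n)$ (which holds since $j \leq M B_n$), more than enough to absorb the $B_n$-loss from the denominator; (c) in the remaining regime ($k\geq K_1$ and $nq_k$ large), Gnedenko's LLT applied to the bounded variable $\tilde X$ gives $\Pr{\tilde S_n=-j} \leq C/(\tilde\sigma_k\sqrt n)$, while the Karamata estimate $\tilde\sigma_k^2 \asymp k^{2-\alpha}L(k)$ implied by (ii) shows that $\tilde\sigma_k\sqrt n$ is comparable to $B_n$ precisely when $k$ is comparable to $B_n$, which is the range dictated by $nq_k \lesssim \log n$; outside this range $e^{-nq_k}$ swamps any polynomial loss in $n$.

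The main obstacle will be case (c): establishing LLT constants for the truncated walk that are genuinely uniform in $k$ and $n$. This is the place where the full strength of assumption (ii) enters, via Karamata's theory of regularly varying functions, to give quantitative control of both $\tilde m_k$ and $\tilde\sigma_k^2$ and to compare the natural normalising sequence of $\tilde S$ with $B_n$. Once this uniformity is in hand, the three-way split above covers all $(n,j,k)$ with $1 \leq j \leq M B_n$.
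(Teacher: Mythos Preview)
Your treatment of \eqref{eq:bd2} is correct and close in spirit to the paper's: both arguments use the cycle-lemma reduction and the local limit theorem to compare conditional and unconditional probabilities. The paper splits the walk at its midpoint and applies the Markov property there, whereas you do a union bound over the index and condition on the first step; either route works and yields the same bound.

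For \eqref{eq:bd1}, however, your proposal and the paper diverge substantially. The paper uses a one-line trick that you are missing: instead of analysing the truncated walk $\tilde S_n$ at all, it observes that
\[
\Pr{\max_{1\le i\le n} X_i \le k-1 \mid W_n=-j}
\;\le\;
\Pr{\max_{1\le i\le \lfloor n/2\rfloor} X_i \le k-1 \mid W_n=-j},
\]
and then applies the Markov property at time $\lfloor n/2\rfloor$. The conditional density $\varphi_{n-\lfloor n/2\rfloor}(W_{\lfloor n/2\rfloor}+j)/\varphi_n(j)$ is bounded above by an absolute constant, uniformly over $1\le j\le MB_n$ and over the value of $W_{\lfloor n/2\rfloor}$, by the local limit theorem (numerator $\le C/B_n$, denominator $\ge c/B_n$). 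This removes the conditioning entirely and leaves the \emph{unconditional} probability $\Pr{X_1\le k-1}^{\lfloor n/2\rfloor}=(1-\mu([k+1,\infty)))^{\lfloor n/2\rfloor}$, whence $(1-x)^a\le e^{-ax}$ finishes. No case analysis, no truncated walk, no uniform-in-$k$ LLT.

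The obstacle you correctly identify in your case (c)---a local limit theorem for $\tilde S_n$ with constants uniform in $k$---is genuine and nontrivial: the law of $\tilde X$ varies with $k$, its mean $\tilde m_k-1$ is nonzero and itself $k$-dependent, and for $\alpha<2$ with $k$ in the intermediate range the relevant scaling is neither purely Gaussian nor purely stable. One can likely push this through with enough care (drift plus Berry--Esseen plus Karamata bookkeeping), but the paper's midpoint trick bypasses the whole issue. Concretely: replace your factorisation $a_k^n\Pr{\tilde S_n=-j}/\Pr{S_n=-j}$ by the single inequality above, and the proof of \eqref{eq:bd1} becomes three lines.
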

As an application of these bounds, we obtain concentration inequalities for the maximum outdegree of a large uniform non-crossing tree (Theorem \ref{thm:nc} below), improving a result by Deutsch \& Noy \cite {DN02}.

\paragraph{Sizes of generations in  stable Bienaymé--Galton--Watson trees.}
Theorem \ref{thm:height} estimates the probability that $ \t_{n}$ has a large height, namely at least $un/B_{n}$. One may then wonder: what is the size of the generation at level $un/B_{n}$, on the event that $\t_{n}$ has height at least $un/B_{n}$? In this direction, we establish the following bounds.
\begin{thm}\label{thm:gen}For every $ \gamma \in (0, \alpha-1)$ and $\eta>0$, there exists a positive constant $C_1>0$ such that for every $ u \geq \eta$,  $v \geq 0$ and every $n \geq 1$:
$$\Pr{0<Z_{u \frac{n}{B_{n}}}(\t_n)< v B_{n}} \leq C_{1} v^{\gamma}.$$\end{thm}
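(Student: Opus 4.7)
My plan is to decompose $\t_n$ at generation $k := \lfloor un/B_n \rfloor$ using the branching property of $\Pmu$, and reduce the problem to an unconditioned estimate via the cycle lemma and a local limit theorem. Let $V = \sum_{j=0}^{k-1} Z_j(\t)$ and, for $m\ge 1$, let $F_m$ be a forest of $m$ independent $\GW_\mu$ trees. Applying the branching property at level $k$ gives
\[
\Pmu\bigl(Z_k = m,\, |\t|=n\bigr) \;=\; \Esmu{\mathbb{1}_{\{Z_k=m\}}\,\mathbb{P}\left(|F_m|=n-V\right)}.
\]
By the Kemperman cycle lemma, $\mathbb{P}(|F_m|=N) = (m/N)\,\mathbb{P}(S_N = N-m)$ with $S_N$ a sum of $N$ i.i.d.\ copies of $\mu$, and the local limit theorem in the domain of attraction of $Y_\alpha$ gives $\mathbb{P}(S_N = N-m) \le C/B_N$ uniformly in $m\in\mathbb{Z}$. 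Combined with $\Pmu(|\t|=n) \sim p_\alpha(0)/(n B_n)$, summation over $1 \le m < v B_n$ produces
\[
\Pr{0<Z_k(\t_n)<vB_n} \;\le\; C\,\Esmu{Z_k\,\mathbb{1}_{\{0<Z_k<vB_n\}}\cdot \frac{nB_n}{(n-V)B_{n-V}}}.
\]

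I would then split this expectation according to $\{V \le n/2\}$ vs.\ $\{V > n/2\}$. On $\{V \le n/2\}$, slow variation of $B_\bullet$ bounds $(nB_n)/((n-V)B_{n-V})$ by a constant, so this contribution is at most $C\,\Esmu{Z_k\,\mathbb{1}_{\{0<Z_k<vB_n\}}} \le C v B_n \,\Pmu(Z_k > 0)$. Slack's estimate $\Pmu(Z_k>0) \sim c\,k^{-1/(\alpha-1)}$, combined with $k^{1/(\alpha-1)} \asymp B_n$ (valid for $k = \lfloor un/B_n \rfloor$ and $u \ge \eta$), gives $\Pmu(Z_k>0) \le C(\eta)/B_n$, hence a bound of order $C(\eta)\,v$. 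Since $v \le v^\gamma$ for $v \in (0,1]$ and $\gamma \in (0,1)$, and the probability is trivially at most $1 \le v^\gamma$ for $v \ge 1$, this yields the claimed $v^\gamma$ bound for any $\gamma \in (0,\alpha-1)\subset(0,1)$.

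The main obstacle is the complementary event $\{V > n/2\}$, where $(nB_n)/((n-V)B_{n-V})$ is uncontrolled. On this event, pigeonhole forces $Z_j > B_n/(2u)$ for some $j < k$, so $W(\t_n) > B_n/(2u)$; plugging this into Theorem~\ref{thm:width} only yields a bound depending on $\eta$ that does not decay in $v$. To recover a $v^\gamma$-decaying bound one has to use both $\{V > n/2\}$ and $\{Z_k < v B_n\}$ jointly: the natural route is to stop at the first level $j^\star < k$ with $Z_{j^\star} > B_n/(2u)$, and then apply the branching property at level $j^\star$. The law of $Z_k$ then becomes that of the $(k-j^\star)$-th generation of a $\GW_\mu$ process started with $z = Z_{j^\star} \ge B_n/(2u)$ particles, and one must bound the probability that such a process is below $v B_n$ after at most $un/B_n$ generations. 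Via Lamperti's correspondence and the scaling of the critical $\alpha$-stable continuous-state branching process, whose one-dimensional marginals have density behaving like $x^{\alpha-2}$ near $0$, this probability is of order $v^{\alpha-1}$. The strict inequality $\gamma < \alpha-1$ in the statement accommodates the slack needed to turn this sharp continuous estimate into a uniform discrete bound; making this argument uniform in $u \ge \eta$ and $n \ge 1$ is in my view the most delicate part of the proof.
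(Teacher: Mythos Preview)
Your decomposition at level $k$ and the treatment of $\{V \le n/2\}$ are correct and yield the bound $C(\eta)\,v$, which suffices for any $\gamma<\alpha-1\le 1$. The gap is on $\{V>n/2\}$: the sketch you propose does not address the size constraint $|\t|=n$. After stopping at $j^\star$ and applying the branching property, the forest of $Z_{j^\star}$ trees grafted above still has to have total size $n-V_{j^\star}$, and you need $\{Z_{k} < vB_n\}$ \emph{jointly} with this constraint. Decomposing that forest again at its own level $k-j^\star$ brings back exactly the factor $nB_n/((n-V)B_{n-V})$ you were trying to avoid; on the other hand, dropping the size constraint and bounding only $\mathbb{P}_{\mu,z}(Z_{k-j^\star}<vB_n)$ via CSBP asymptotics costs a factor $nB_n$ (coming from $1/\Pmu(|\t|=n)$) that you have no way to recover. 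So as written the argument does not close.

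The paper's route is genuinely different and sidesteps this obstacle by size-biasing. Using \eqref{eq:sizeb} and the spine-splitting trick of Lemma~\ref{lem:sb}, one gets
\[
\Pr{0<Z_{un/B_n}(\t_n)<vB_n}\ \le\ \frac{1}{\Pmu(|\t|=n)}\,\Es{\frac{\mathbbm{1}_{Z^*_{un/(2B_n)}\le vB_n}}{Z^*_{un/(2B_n)}}}\cdot\Es{\frac{1}{B'_{M^*_{un/(2B_n)}}}}.
\]
The key point is that the forest used to absorb the size constraint via \eqref{eq:psi} consists of the $M^*_{un/(2B_n)}$ trees branching off the \emph{lower} half of the spine, which is independent of the generation size $Z^*_{un/(2B_n)}$ measured in the \emph{upper} half. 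The first expectation is then $\le Cv^\gamma/B_n$ by Proposition~\ref{prop:Z} and Lemma~\ref{lem:pB} (this is where $\gamma<\alpha-1$ enters, through \eqref{eq:Z2}), and the second is $\le C/n$ by Lemma~\ref{lem:exp}; since $nB_n\Pmu(|\t|=n)$ stays bounded away from zero, the theorem follows. The manufactured independence between the ``small $Z$'' event and the trees that handle the size constraint is precisely what your argument on $\{V>n/2\}$ lacks.
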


When $u$ takes values in a compact subset of $(0,\infty)$, the exponent $\alpha-1$ is optimal (see Remark \ref{rem:CK}). In proving this, we  crucially rely on the following uniform estimate, concerning the size $Z^{\ast}_{n}$ at generation $n$ of a $\GW_{\mu}$ tree conditioned to survive (see Sec.~\ref{sec:obs} for a definition), and which is of independent interest.

\begin {prop}\label{prop:Z}Set $p_{n}= \Prmu{H \geq n}$. For every $ \beta \in (0,\alpha)$, there exists a constant $C >0$ such that
\begin{equation}
\label{eq:Z}\textrm {for every } n \geq 1, x \geq 0, \qquad \Pr {p_{n}Z^{\ast}_{n} \leq x} \leq C \cdot x ^ {\beta}.
\end{equation}
\end {prop}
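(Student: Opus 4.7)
The plan is to combine the Kesten-tree spine identity with a Chernoff bound and a careful analysis of iterates of the offspring generating function. Writing $f_n(s)= \Es{s^{Z_n(\tau)}}$ for the generating function of $Z_n$ under $\Pmu$, the classical fact that $Z^{\ast}_n$ has the size-biased distribution of $Z_n$ (by construction of the Kesten tree along the spine) yields the identity $\Es{s^{Z^{\ast}_n}} = s f'_n(s)$. The elementary inequality $\Pr{W \leq x} \leq e^{\lambda x} \Es{e^{-\lambda W}}$ applied with $\lambda = 1/x$ therefore reduces the proposition to controlling $f'_n(e^{-p_n/x})$ uniformly in $n$.

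The key estimate I would establish is that for every $\beta \in (0,\alpha)$ there exists $C_\beta > 0$ with
$$f'_n(s) \;\leq\; C_\beta \left(\frac{1-f_n(s)}{1-s}\right)^{\beta}, \qquad \forall\, n \geq 1, \ s \in [0,1).$$
Set $u_i := 1 - f_i(s)$ and $\psi(u) := \varphi(1-u)-(1-u) \geq 0$, where $\varphi$ is the offspring generating function; then the iteration reads $u_{i+1} = u_i - \psi(u_i)$ and the chain rule gives $f'_n(s) = \prod_{i=0}^{n-1} \varphi'(1-u_i)$. The core pointwise inequality $\varphi'(1-u) \leq (1 - \psi(u)/u)^{\beta} = (u_{i+1}/u_i)^{\beta}$ for $u \in (0,\eta]$ then makes the product telescope to $(u_n/u_I)^\beta$, where $I$ is the first index with $u_I \leq \eta$. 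To obtain this pointwise inequality one uses that $\psi$ is regularly varying of index $\alpha$ at $0$ (by the assumption on $\mu$) and that $\psi'(u) = 1-\varphi'(1-u)$ is monotone (since $\varphi'$ is nondecreasing); the monotone density theorem then yields $\psi'(u) \sim \alpha\, \psi(u)/u$ as $u \to 0^+$, so $\psi'(u) \geq \beta\, \psi(u)/u$ on some $(0,\eta]$ whenever $\beta<\alpha$. A short Taylor-expansion comparison of $\log(1-\psi'(u))$ with $\beta \log(1-\psi(u)/u)$ then closes the argument on $(0,\eta]$. For the remaining ``bad'' indices $i<I$ (where $u_i>\eta$), there are at most $\lceil 1/\psi(\eta)\rceil$ of them since $\psi$ is nondecreasing; and since $\sup_{u\in[\eta,1]}\psi(u)/u<1$ by continuity together with $\mu(0)+\mu(1)<1$, one obtains $u_I \geq c\, u_0$ for a constant $c>0$, which is absorbed into $C_\beta$.

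The conclusion is then routine. Since $f_n(s)\geq f_n(0)=1-p_n$ for $s\geq 0$ we have $1-f_n(s)\leq p_n$. For $x\geq p_n$, choosing $\lambda=1/x$ gives $1-e^{-p_n/x}\geq p_n/(2x)$, hence $(1-f_n(e^{-p_n/x}))/(1-e^{-p_n/x})\leq 2x$; combining with the key estimate and the Chernoff bound yields $\Pr{p_n Z^{\ast}_n \leq x} \leq e\cdot 2^{\beta}C_\beta \cdot x^\beta$. For $x<p_n$ the probability vanishes because $Z^{\ast}_n\geq 1$ almost surely in the Kesten tree. The main obstacle is the uniformity in the key estimate: the pointwise inequality $\varphi'(1-u)\leq (1-\psi(u)/u)^\beta$ holds only for small $u$, and one must carefully bound the effect of the finitely many indices where $u_i$ stays away from $0$ so that $C_\beta$ does not grow with $n$ or with $s$.
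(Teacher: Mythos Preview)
Your argument is correct and takes a genuinely different route from the paper's. Both proofs begin from the same two ingredients --- the spine identity $\Es{s^{Z^{\ast}_n}}=sf'_n(s)=s\prod_{i=0}^{n-1}\varphi'(f_i(s))$ and a Chernoff bound reducing the tail estimate to a Laplace-transform bound of the form $\Es{e^{-\lambda p_n Z^{\ast}_n}}\leq C\lambda^{-\beta}$ --- but they diverge sharply in how they control the product. The paper replaces the product by an integral $\exp\bigl(\int_{2/n}^1 n\ln\widehat F(\Esmu{e^{-\lambda p_n Z_{\lfloor un\rfloor}}})\,du\bigr)$, invokes Slack's limit theorem for $p_nZ_n$ conditioned on survival to force $\Esmu{e^{-\lambda p_n Z_{\lfloor un\rfloor}}\mid Z_{\lfloor un\rfloor}>0}\leq\varepsilon$, and then feeds in the explicit regularly-varying asymptotics of $1-\widehat F(1-s)$ and several applications of the Potter bounds to evaluate the integral. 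Your proof sidesteps all of this via the clean pointwise inequality $\varphi'(1-u)\leq(1-\psi(u)/u)^\beta$ for small $u$, which makes the product telescope to $(u_n/u_I)^\beta$; the only regular-variation input you need is the monotone density theorem applied once to $\psi$. The handling of the finitely many indices with $u_i>\eta$ is also tighter than the paper's analysis of the range $u\in[0,C_3\lambda^{-\gamma}]$. What the paper's approach buys is that it makes the connection to the limiting Laplace transform of $Z$ (and hence the sharpness of the exponent $\alpha$) more transparent; what your approach buys is brevity and the elimination of any appeal to Slack's theorem, the explicit slowly varying function $L$, or asymptotics of $p_n$.
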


Let us mention that using a different approach, Croydon \& Kumagai \cite[Proposition 2.6]{CK08} show the weaker result that for every $ \beta \in (0,\alpha-1)$, there exists a constant $C >0$ such that \eqref{eq:Z} holds. However, in our case, it is important to be able to choose $\beta> \alpha-1$. The exponent $\alpha$ is optimal, since by \cite[Theorem 4]{Pak76}, $p_{n}Z_{n}^{\ast}$ converges in distribution to a random variable $Z^{\ast}$ with Laplace transform given by
$$\Es{e^{-\lambda Z^{\ast}}}= \frac{1}{ \left( 1+ \lambda^{\alpha-1} \right)^{ \frac{\alpha}{\alpha-1}}}, \qquad \lambda \geq 0,$$
which shows that for every $\epsilon>0$, there exists $C>0$ such that $ \Pr{Z^{*} \leq x} \geq  C x^{\alpha+\epsilon}$ for every $0 \leq x \leq 1$. It would be interesting to know whether \eqref{eq:Z} holds for $\beta=\alpha$.  

\begin{cor}\label{cor:gen}
\begin{enumerate}
\item[(i)]  For every $ \gamma \in (0, \alpha/ ( \alpha-1))$ and $ \delta \in (0, \alpha)$, there exist positive constants $C_1,C_2>0$ such that for every $ u,v \geq 0$ and every $n \geq 1$:
$$ \Pr{Z_{u \frac{n}{B_{n}}}(\t_n)> v B_{n}} \leq C_{1} \exp(-C_{2}(u^{\delta}+v^{\gamma})).$$
\item[(ii)] For every $ \gamma \in (0, (\alpha-1)/2)$ and $ \delta \in (0, \alpha)$, there exist positive constants $C_1,C_2>0$ such that  for every $ u \geq \eta,v \geq 0$ and every $n \geq 1$:
$$ \Pr{0<Z_{u \frac{n}{B_{n}}}(\t_n)< v B_{n}} \leq C_{1} v^{\gamma}\exp(-C_{2} u^{\delta})$$
\end{enumerate}
\end{cor}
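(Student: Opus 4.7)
The plan is to combine Theorems \ref{thm:width}, \ref{thm:height}, and \ref{thm:gen} via elementary set-theoretic inclusions and the inequality $\min(a,b) \leq \sqrt{ab}$ for $a,b \geq 0$, which converts a ``minimum of two tail bounds'' into a product-style bound that matches the exponential shape asked for.

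For part (i), observe that the event $\{Z_{\lfloor u n/B_n\rfloor}(\t_n) > v B_n\}$ is contained both in $\{W(\t_n) > v B_n\}$ (since $W \geq Z_k$ for every $k$) and in $\{H(\t_n) \geq u n/B_n\}$ (otherwise the generation at level $u n /B_n$ is empty). Hence
\[
\Pr{Z_{u \frac{n}{B_n}}(\t_n) > v B_n} \leq \sqrt{\Pr{W(\t_n) > v B_n}\cdot \Pr{H(\t_n) \geq u \tfrac{n}{B_n}}}.
\]
Applying Theorem \ref{thm:width} with exponent $\gamma \in (0, \alpha/(\alpha-1))$ and Theorem \ref{thm:height} with exponent $\delta \in (0,\alpha)$ to the two factors and taking the square root yields the desired bound with the constant $C_2$ halved.

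For part (ii), the same height-inclusion gives $\{0 < Z_{u n/B_n}(\t_n) < v B_n\} \subseteq \{H(\t_n) \geq u n/B_n\}$, while Theorem \ref{thm:gen} controls directly the probability of the left-hand event by $C\, v^{\gamma'}$ for any $\gamma' \in (0,\alpha-1)$. Using $\min(a,b)\leq \sqrt{ab}$ once more,
\[
\Pr{0 < Z_{u \frac{n}{B_n}}(\t_n) < v B_n} \leq \sqrt{C\, v^{\gamma'}\cdot C_1 \exp(-C_2 u^\delta)}.
\]
Given $\gamma \in (0, (\alpha-1)/2)$, we pick $\gamma' := 2\gamma \in (0,\alpha-1)$, and the square root gives the stated form $C_1\, v^{\gamma}\, \exp(-C_2 u^\delta)$ after renaming constants. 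The restriction $u \geq \eta$ is inherited from the corresponding hypothesis in Theorem \ref{thm:gen}.

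There is really no obstacle here beyond bookkeeping, since the two inclusions are immediate and the $\min \leq \sqrt{\cdot}$ trick is exactly designed to turn a pair of one-sided estimates into a product bound without losing the correct exponents in $u$ and $v$. The only mild point to watch is to introduce the floor $\lfloor un/B_n\rfloor$ (which is what is meant implicitly in the statement) and to make sure that $u n/B_n \geq 1$ when applying the height bound, absorbing the trivial small-$u$ range in the constants.
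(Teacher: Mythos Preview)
Your proof is correct and follows essentially the same argument as the paper: both parts use the two inclusions $\{Z_{un/B_n}>0\}\subseteq\{H\geq un/B_n\}$ and $\{Z_{un/B_n}>vB_n\}\subseteq\{W>vB_n\}$, together with the $\min(a,b)\leq\sqrt{ab}$ trick, and then invoke Theorems \ref{thm:width}, \ref{thm:height}, and \ref{thm:gen}. Your explicit choice $\gamma'=2\gamma$ in part (ii) is exactly what the paper has in mind when it says the second assertion is proved ``similarly''.
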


The second assertion gives a better bound than Theorem \ref{thm:gen} when $u$ is large, but we believe that the exponent $(\alpha-1)/2$ is not optimal here.

\paragraph{Techniques.} We now comment on the main tools involved in the proof of the bounds for the width and height, and in particular on their connections with   \cite{ADS13}. The main tool, as in \cite{ADS13}, is the coding of conditioned Bienaymé--Galton--Watson trees by their {\L}ukasiewicz paths, which are, roughly speaking, nonnegative spectrally positive random walks conditioned on a late return to $0$. In order to establish the bounds on the width and maximal outdegrees, we establish bounds on the supremum of such walks, following the idea of  \cite{ADS13} that since these walks are spectrally positive, reaching high values and then returning to $0$ has a sub-exponential cost. However, the implementation of this idea is different, since \cite{ADS13} uses a bound that is only known to hold in the finite variance case (see the discussion after the statement of Theorem \ref{thm:borneW}). The starting observation for proving the bounds for the height is the same as in \cite{ADS13}: if a  conditioned Bienaymé--Galton--Watson tree has a large height, then vertices at high generations will have a lot of children branching off their ancestral line to the root, resulting in a large value for the supremum of the {\L}ukasiewicz path, which we already know to have an sub-exponential cost. However, a major difference is that the proof of \cite{ADS13} crucially uses the fact that the width and height of $\t_{n}$ are of the same order $\sqrt{n}$, which breaks down when $\mu$ has infinite variance, and the proof thus requires new ideas.

\paragraph{Acknowledgments.} The author is grateful to Louigi Addario-Berry and to Svante Janson for stimulating discussions, as well as to an anonymous referee for her or his extremely careful reading and many comments that greatly improved the paper, and would like to thank the Newton Institute, where this work was finalized, for hospitality.

\section{Estimates for random walks}
\label{sec:RW}

Recall that $\mu$ is a critical offspring distribution belonging to the domain of attraction of a stable law of index $\alpha \in (1,2]$. Let $(W_n)_{n \geq 0}$ be a random walk with starting point $W_0=0$ and jump distribution given by $ \Pr{W_{1}=i}= \mu(i+1)$ for $i \geq -1$.  Observe that  $\Es{W_{1}}=0$ since $\mu$ is critical. 

In this section, we study statistics of the random walk $(W_{n})_{n \geq 0}$ under different types of conditioning. They will play an important role since we will later see that Bienaymé--Galton--Watson related are coded by such walks. It may be useful to refer to Table \ref{tab:sec2} to keep track of the main notation of this section.

\begin{table}[htbp]\caption{Table of the main notation and symbols appearing in Section \ref{sec:RW}.}
\centering
\begin{tabular}{c c p{12cm} }
\toprule
$\mu$ &  & Critical offspring distribution on $\Z_{+}$ in the domain of attraction of a stable law of index $\alpha \in (1,2]$.\\
$L(n)$ &  & Slowly varying function such that $\textrm{Var}(X \cdot \mathbb{1}_{X \leq n})= n^{2-\alpha} L(n)$, with $X$ distributed as $\mu$.\\
$(W_{n}; {n \geq 0)}$ & & Random walk  with $W_0=0$ and jump distribution $ \Pr{W_{1}=i}= \mu(i+1)$ for $i \geq -1$. \\
$\zeta_{j}$ && is  $\inf \{n \geq 1: W_{n}=-j\}$.\\
$Y_{\alpha}$ &  & Stable random variable with Laplace exponent given by $\Es{ \exp(- \lambda Y_{\alpha})}= \exp(\lambda ^ \alpha)$ for  $\lambda>0$. \\
 $d_{\alpha}(x)$ && Density of $Y_{\alpha}$ at $x \in \R$.\\
$(B_{n})_{n \geq 1}$ && Increasing sequence such that $W_{n}/B_{n} \rightarrow Y_{\alpha}$ in distribution.\\
$\X$ && Normalized excursion of a spectrally positive strictly stable Lévy process of index $\alpha$. \\
$\Xbr$ && Bridge of a spectrally positive strictly stable Lévy process of index $\alpha$.\\
\bottomrule
\end{tabular}
\label{tab:sec2}
\end{table}

\subsection{Large deviations for left-continuous random walks}

Recall  from the Introduction that $(B_{n})_{n \geq 1}$ is an increasing sequence such that $W_{n}/B_{n}$ converges in distribution as $n \rightarrow  \infty$ to the random variable $Y_{\alpha}$ with Laplace exponent given by $\Es{ \exp(- \lambda Y_{\alpha})}= \exp(\lambda ^ \alpha)$ for every $\lambda>0$. We let $d_{\alpha}(x)$ denote the density of $Y_{\alpha}$ at $x \in \R$. Note that $d_{2}(x)=e^{-x^{2}/4}/\sqrt{4 \pi}$ is the density of a centered Gaussian distribution with variance $2$, and that $d_{\alpha}(0)=|\Gamma(-1/\alpha)|^{-1}$ (see \cite[Lemma XVII.6.1]{Fel71}).

 In addition, \begin{equation}
\label{eq:cvB} \frac{n L(B_{n})}{B_{n}^{\alpha}}  \quad \mathop{\longrightarrow}_{n \rightarrow \infty} \quad \frac{1}{(2-\alpha)\Gamma(-\alpha)},
\end{equation}
where we recall that $L$ is the slowly varying function such that  $\textrm{Var}(X \cdot \mathbb{1}_{X \leq n})= n^{2-\alpha} L(n)$ with $X$  a random variable distributed according to $\mu$ (by continuity, the quantity $((2-\alpha)\Gamma(-\alpha))^{-1}$ is interpreted as equal to $2$ for $\alpha=2$).  Indeed, in the notation of \cite[Sec.~4.5.1]{Whi02},  $Y_{\alpha}$  is the stable random variable $S_{\alpha}(|\cos(\pi \alpha /2)|^{1/\alpha},1,0)$   by \cite[Eq.~(5.17)]{Whi02}. First assume that $ 1 < \alpha <2$.  It follows from  \cite[Eq.~(5.16)]{Fel71} that $\Pr{|W_{1}| \geq n}  \sim \Pr{W_{1} \geq n} \sim \frac{2-\alpha}{\alpha} L(n) n^{-\alpha}$ as $n \rightarrow \infty$. Thus, by \cite[Theorem 4.5.1]{Whi02},
$$  \frac{n L(B_{n})}{B_{n}^{\alpha}}  \quad \mathop{\longrightarrow}_{n \rightarrow \infty} \quad \  \frac{\alpha}{2-\alpha} \cdot \frac{1-\alpha}{\Gamma(2-\alpha)} = \frac{1}{(2-\alpha)\Gamma(-\alpha)}.$$
If $\alpha=2$, we have $\Es{W_{1}^{2} \mathbbm{1}_{|W_{1}| \leq n}} \sim L(n) n^{2-\alpha}$ as $n \rightarrow \infty$, since $ \sum_{i=0}^{n} i^{2} \mu(i)-1 \sim L(n) n^{2-\alpha}$  by the definition of $L$. It then follows from
\cite[Theorem XVII.5.3]{Fel71} that $nL(B_{n})/B_{n}^{2} \rightarrow 2$.

The local limit theorem  \cite[Theorem 4.2.1]{IL71} shows that
\begin{equation}
\label{eq:LL}\sup_{k \in \Z} \left| B_{n} \Pr{W_{n}=k}- d_{\alpha} \left(  \frac{k}{B_{n}} \right)  \right|  \quad \mathop{\longrightarrow}_{n \rightarrow \infty} \quad 0.
\end{equation}

 By the so-called representation theorem (see e.g.  \cite[Theorem 1.3.1]{BGT87}), we can write $$ L(x)=c(x) \exp \left( \int_1^x \frac{\eta(u)}{u} du\right), \qquad x \geq 0,$$
where $c$ is a non-negative measurable function having a finite positive
limit at infinity and $\eta$ is a measurable function tending to $0$
at infinity. It easily follows that if $(x_n, y_n)_ { n\geq 1}$ are two sequences tending to infinity, for every $ \e>0$, there exists a constant $C>1$ such that for every integer $n$ sufficiently large:\begin{equation}
\label{eq:sv2} C^ {-1} \left( \frac{\max (x_n,y_n)}{ \min (x_n,y_n)} \right)^ {-\e}
 \leq \frac {L(y_n)}{L(x_n)} \leq C  \left( \frac{\max (x_n,y_n)}{ \min (x_n,y_n)} \right)^\e,
\end{equation}
Similar bounds hold with $L(n)$ replaced by $B_{n}/n^{1/\alpha}$, since the latter quantity is slowly varying.
In the literature, these inequalities are known as the Potter bounds.

We shall establish the following estimate.

\begin{prop} \label{prop:devRW}For every $ \delta \in (0, \alpha/ ( \alpha-1))$ there exists $C_1,C_2>0$ such that for every $ u \geq 0$ and every $n \geq 1$:
$$ \Pr { \min_ {0 \leq i \leq n} W_i \leq - u B_n} \leq C_1 \exp (- C_2 u^ \delta).$$
\end {prop}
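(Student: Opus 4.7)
My plan is to first establish a Chernoff-type tail bound on the single-time value $W_n$, and then bootstrap it to the running minimum using the strong Markov property together with the left-continuity of $W$ (which has only unit downward jumps). The exponent $\alpha/(\alpha-1)$ arises naturally as the Legendre transform of $\theta \mapsto \theta^\alpha$, matching the super-exponential left-tail decay of the spectrally positive stable variable $Y_\alpha$.

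\textbf{Chernoff bound on $W_n$.} I would apply the exponential Markov inequality: for every $\lambda > 0$,
$$\Pr{W_n \leq -uB_n} \;\leq\; e^{-\lambda u B_n}\phi(\lambda)^n, \qquad \phi(\lambda) := \Es{e^{-\lambda W_1}}.$$
Since $\Es{W_1}=0$, I rewrite $\phi(\lambda)-1 = \Es{g(\lambda W_1)}$ with $g(x)=e^{-x}-1+x \geq 0$. Splitting this expectation according to whether $\lambda W_1 \leq 1$ or $>1$ (and treating the atom $W_1=-1$ separately), using the elementary bounds $g(x) \leq x^2/2$ and $g(x) \leq x$ for $x \geq 0$, and plugging in the regularly-varying estimates $\Es{W_1^2\mathbbm{1}_{0\leq W_1\leq y}} \leq Cy^{2-\alpha}L(y)$ and $\Es{W_1\mathbbm{1}_{W_1>y}} \leq Cy^{1-\alpha}L(y)$ (direct consequences of the assumptions on $\mu$), I obtain $\phi(\lambda)-1 \leq C\lambda^\alpha L(1/\lambda)$ for $\lambda \in (0,\lambda_0]$. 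Setting $\lambda=\theta/B_n$ and combining the Potter bounds \eqref{eq:sv2} with the convergence \eqref{eq:cvB} of $nL(B_n)/B_n^\alpha$ gives the uniform estimate
$$n\log\phi(\theta/B_n) \;\leq\; C'\theta^{\alpha+\epsilon}$$
for every fixed $\epsilon>0$ and $\theta$ in a suitable range starting at $1$. Optimizing $\theta \asymp u^{1/(\alpha-1+\epsilon)}$ in the Chernoff bound then yields $\Pr{W_n \leq -uB_n} \leq C_1 \exp\bigl(-c''\,u^{(\alpha+\epsilon)/(\alpha-1+\epsilon)}\bigr)$, and since $(\alpha+\epsilon)/(\alpha-1+\epsilon)\nearrow \alpha/(\alpha-1)$ as $\epsilon\downarrow 0$, any $\delta<\alpha/(\alpha-1)$ is attainable.

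\textbf{Passing to the running minimum.} Setting $k=\lceil uB_n\rceil$ and $\tau_k = \inf\{i \geq 0: W_i = -k\}$, left-continuity gives $\{\min_{0\leq i\leq n} W_i \leq -k\} = \{\tau_k \leq n\}$. For $u \geq 2$ one has $k/2 \geq B_n \geq B_m$ for every $m \leq n$, so applying the strong Markov property at $\tau_k$ together with the positive lower bound $c_0 := \inf_{m\geq 0} \Pr{W_m \leq B_m} > 0$ (positive because $\Pr{W_m \leq B_m} \to \Pr{Y_\alpha \leq 1}>0$ and each of the finitely many remaining terms is individually positive) produces
$$\Pr{W_n \leq -uB_n/2} \;\geq\; \Es{\mathbbm{1}_{\tau_k \leq n}\Pr{W'_{n-\tau_k} \leq k/2}} \;\geq\; c_0\,\Pr{\tau_k \leq n},$$
whence $\Pr{\tau_k \leq n} \leq c_0^{-1}\Pr{W_n \leq -uB_n/2}$, and together with the previous step this concludes (the range $u<2$ being trivial upon enlarging $C_1$). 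The main obstacle will be the uniform control of $\phi$ in the first step: the slowly varying factor $L(1/\lambda)$ forces the use of Potter bounds, which introduce an unavoidable $\theta^\epsilon$ loss and degrade the optimal Legendre exponent from $\alpha/(\alpha-1)$ to $(\alpha+\epsilon)/(\alpha-1+\epsilon)$, explaining the strict inequality $\delta<\alpha/(\alpha-1)$ in the statement; a secondary technicality is that the regime $u > n/B_n$ is vacuous because $W_n \geq -n$ deterministically.
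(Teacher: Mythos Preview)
Your proof is correct, but it takes an unnecessarily circuitous route compared to the paper. The paper observes that $(e^{-hW_i};\, i\geq 0)$ is a nonnegative submartingale and applies Doob's maximal inequality in one stroke:
\[
\Pr{\min_{0\leq i\leq n} W_i \leq -uB_n} = \Pr{\max_{0\leq i\leq n} e^{-hW_i} \geq e^{huB_n}} \leq e^{-huB_n}\,\Es{e^{-hW_1}}^n,
\]
which already controls the running minimum, making your entire second step (strong Markov at $\tau_k$, the skip-free argument, the constant $c_0$) superfluous. From there the paper proceeds exactly as you do in your first step: it bounds $\Es{e^{-hW_1}}-1$ by $C\,h^\alpha L(1/h)$ via the Appendix estimate \eqref{eq:equivalents}, uses the Potter bounds to absorb the slowly varying factor at the cost of an $\epsilon$-loss, and tunes the exponent. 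The only cosmetic difference in the Chernoff part is that the paper parametrizes directly by setting $h=u^{\eta}/B_n$ with $\eta=\delta-1\in(0,1/(\alpha-1))$, so the leading term is $-u^{1+\eta}=-u^{\delta}$ without further optimization, whereas you set $\lambda=\theta/B_n$ and optimize in $\theta$ afterwards; these are equivalent.

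Your two-step decomposition is a legitimate alternative and would be the natural thing to do for a process where Doob's inequality is unavailable, but here the submartingale structure makes the direct route cleaner and shorter.
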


\begin{proof} Here $C$ will stand for a positive constant which may vary from expression to expression (but that is independent of $u$ and $n$). Note that $ \Pr { \min_ {0 \leq i \leq n} W_i  \leq - u B_n} =0$ if $ u B_n > n$, so that we can suppose without loss of generality that $ 1 \leq u \leq n/B_n$.  Write, for $h>0$:
\begin{eqnarray}\Pr { \min_ {0 \leq i \leq  n} W_i \leq - u B_n} \notag 
&=&  \Pr  {  \max_ {0 \leq i \leq  n}  e^ { h (-W_i) } \geq  e^{  h u B_n}} \notag \\
& \leq &   {e^{  - h u B_n}} \Es {  e^ { -h W_n }}= {e^{  - h u B_n}} \Es {  e^ { -h W_1 }} ^n \label {eq:m3}
\end{eqnarray}
where we have used Doob's maximal inequality with the submartingale $ \left( e^ { h (-W_n)} ; n \geq 0\right)$ for the inequality.  Fix $ \eta \in (0, { 1}/({ \alpha-1}))$ and note that $ \eta  \alpha< 1+ \eta$. We shall apply the inequality  \eqref{eq:m3} with $ h= h_{n}(u)=u^ \eta / B_n$.  Observe that by the Potter bounds, $u^ \eta / B_n \rightarrow 0$  as $n \rightarrow \infty$, uniformly in $1 \leq u \leq n/B_n$. Therefore, by the estimate \eqref{eq:equivalents} of the Appendix, for every $n \geq 1$ and $1 \leq  u \leq n/B_n$,
$$\Es {  e^ { - \frac{u^ \eta}{B_n} W_1 }} ^n  \leq \exp \left(  C  n L \left( \frac{B_n}{u ^ \eta} \right) \frac{u ^ { \eta \alpha}}{ B_n^ \alpha}\right) $$
Now choose $ \epsilon>0$ such that $1 + \eta > \eta ( \alpha+ \epsilon)$.  For every $n$ sufficiently large and $u \geq 1$, we have $L(B_n/u^ \eta)\leq C u^ { \epsilon  \eta } L(B_n)$  by the Potter bounds, so that $$\exp \left(  C  n L \left( \frac{B_n}{u ^ \eta} \right) \frac{u ^ { \eta \alpha}}{ B_n^ \alpha}\right)  
\leq \exp\left(  C   \frac{n L \left( {B_n}\right)}{ B_n^ \alpha}  u ^ { \eta (\alpha+ \epsilon')}\right).$$
By \eqref{eq:cvB}, ${n L \left( {B_n}\right)}/{ B_n^ \alpha}$ is bounded as $n$ varies. It follows that for $n$ sufficiently large and $1 \leq u \leq   n/B_{n}$,
\begin{equation}
\label{eq:m4}  \Es {  e^ { - \frac{u^ \eta}{B_n} W_1 }} ^n  \leq \exp\left(  C   u ^ { \eta (\alpha+ \epsilon)}\right).
\end{equation}
Putting together \eqref{eq:m3} and \eqref{eq:m4}, we get
$$\Pr { \min_ {0 \leq i \leq  n} W_i \leq - u B_n}  \leq  \exp( -  u^ {1+ \eta}+ C  u^{ \eta ( \alpha +\epsilon)}).$$
Setting $\delta=1+\eta$, the conclusion readily follows from the choice of $\epsilon$.
\end{proof}

\subsection{Conditioned random walks and cyclic shifts}

 In view of applications for Bienaymé--Galton--Watson trees, we will need estimates on conditioned random walks. We establish in particular a conditioned version of Proposition \ref{prop:devRW}. For every $j \geq 1$,  set $\zeta_{j}= \inf \{n \geq 1: W_{n}=-j\}$.

\begin{thm} \label {thm:borneW}For every $ \delta \in (0, \alpha/ ( \alpha-1))$, there exist $C_1,C_2>0$ such that for every $ u \geq 0$ and every $n \geq 1$:
$$ \Pr { \max_ {1 \leq i \leq n} W_i \geq u B_n \ \big| \ \zeta_{1}=n} \leq C_1 \exp (- C_2 u^ \delta).$$
\end {thm}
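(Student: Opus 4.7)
The plan is to reduce the excursion conditioning $\{\zeta_1=n\}$ to the bridge conditioning $\{W_n=-1\}$ via Vervaat's cyclic lemma, then bring Proposition \ref{prop:devRW} to bear by exploiting the left-continuity of $W$ (i.e.\ jumps $\geq -1$).

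\textbf{Step 1 (Vervaat reduction).} Under the bridge measure $\Pmu[\,\cdot\,|\,W_n=-1]$, let $K^{\star}$ denote the first time the walk attains its minimum on $[0,n]$ and set $\hat W_i := W_{(i+K^\star)\bmod n} - W_{K^\star}$ for $0\leq i < n$, $\hat W_n := -1$. The cyclic lemma, combined with the exchangeability of the bridge increments, shows that $\hat W$ is distributed as $W$ conditioned on $\zeta_1=n$, and $\max_{1\leq i\leq n}\hat W_i = \overline W - \underline W$ where $\overline W = \max_{[0,n]} W$ and $\underline W = \min_{[0,n]} W$. Therefore
\begin{equation*}
  \Prmu{\max_{1\leq i\leq n} W_i \geq uB_n \mid \zeta_1 = n} = \Prmu{\overline W - \underline W \geq uB_n \mid W_n = -1}.
\end{equation*}
A union bound yields $\{\overline W - \underline W \geq uB_n\} \subseteq \{\overline W \geq uB_n/2\} \cup \{\underline W \leq -uB_n/2\}$. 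Moreover, under the bridge the time-reversed walk $W'_i := -1 - W_{n-i}$ is equal in law to $W$ (by exchangeability) and satisfies $\max W' = -1 - \underline W$, so the two events are essentially equivalent; it suffices to bound $\Prmu{\underline W \leq -uB_n/2 \mid W_n = -1}$.

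\textbf{Step 2 (First passage and Kemperman).} Left-continuity gives $\{\underline W \leq -m\} = \{\zeta_m \leq n\}$ with $m := \lceil uB_n/2\rceil$. The strong Markov property at $\zeta_m$ combined with Kemperman's identity $\Prmu{\zeta_m = k} = (m/k)\Prmu{W_k = -m}$ gives
\begin{equation*}
  \Prmu{\zeta_m \leq n,\,W_n = -1} = \sum_{k=1}^n \frac{m}{k}\,\Prmu{W_k = -m}\,\Prmu{W_{n-k} = m-1}.
\end{equation*}
I would then apply a refined local limit estimate on the left tail,
\begin{equation*}
  \Prmu{W_j = -m} \leq \frac{C}{B_j}\,\exp\!\bigl(-c(m/B_j)^{\alpha/(\alpha-1)}\bigr),
\end{equation*}
which captures the sub-exponential decay of the density $d_\alpha(-y)$ as $y\to\infty$ (inherited from the Laplace exponent $\Es{e^{-\lambda Y_\alpha}}=e^{\lambda^\alpha}$, in the same spirit as Proposition \ref{prop:devRW}). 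Since $m/B_k \geq u/2$ uniformly in $k$, a common factor $\exp(-cu^{\alpha/(\alpha-1)})$ pulls out of the sum. Controlling the remaining weight $\sum_k m/(kB_k B_{n-k})$ by splitting at $k=n/2$, using the crude LLT bound $\Prmu{W_{n-k}=m-1}\leq C/B_{n-k}$, the Potter bounds \eqref{eq:sv2}, and the scaling $n/B_n^\alpha = O(1)$ from \eqref{eq:cvB}, and dividing by $\Prmu{W_n = -1} \sim d_\alpha(0)/B_n$ from \eqref{eq:LL}, yields the stated bound for any $\delta < \alpha/(\alpha-1)$.

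\textbf{Main obstacle.} The most delicate ingredient is the refined pointwise LLT with sub-exponential decay uniformly in $j$ and $m$: the standard uniform LLT \eqref{eq:LL} only provides the envelope $C/B_j$, the decay being lost in the $o(1)$ error. Extracting an effective sub-exponential factor on the negative side requires a quantitative saddle-point analysis or an exponential tilt argument analogous to that used in Proposition \ref{prop:devRW}, and this is precisely where left-continuity of $W$ is essential; via Kemperman's identity $\Prmu{W_j=-m}=(j/m)\Prmu{\zeta_m=j}$, it relates the pointwise decay of $\Prmu{W_j=-m}$ to the sub-exponential control on first-passage times. The additional loss from the optimal exponent $\alpha/(\alpha-1)$ to an arbitrary $\delta<\alpha/(\alpha-1)$ absorbs the polynomial prefactors that appear both in the refined LLT and in the summation.
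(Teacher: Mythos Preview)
Your Step~1 (Vervaat reduction and the time-reversal symmetry of the bridge) is fine and matches the paper's opening move. The genuine gap is in Step~2: the ``refined local limit estimate''
\[
\Pr{W_j=-m}\;\leq\;\frac{C}{B_j}\,\exp\!\bigl(-c(m/B_j)^{\alpha/(\alpha-1)}\bigr)
\]
that you invoke is precisely the ingredient the paper says is \emph{not available} in this generality. The discussion following the statement of Theorem~\ref{thm:borneW} notes that such a uniform sub-exponential pointwise bound is known when $\sigma^2<\infty$ (Janson) and when $\mu(n)\sim Cn^{-(1+\alpha)}$ (Curien--Kortchemski), but not for a general offspring distribution in the domain of attraction of a stable law. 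The uniform LLT~\eqref{eq:LL} only gives $B_j\Pr{W_j=-m}=d_\alpha(-m/B_j)+o(1)$, and for $m/B_j\to\infty$ the $o(1)$ error swamps the exponentially small $d_\alpha$ term; your ``exponential tilt analogous to Proposition~\ref{prop:devRW}'' does not straightforwardly produce a pointwise bound, since Proposition~\ref{prop:devRW} controls a \emph{tail}, not a single atom. You correctly identify this as the main obstacle, but you do not resolve it.

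The paper avoids this entirely by applying the Markov property at the \emph{deterministic} midpoint $\lfloor n/2\rfloor$ rather than at the random time $\zeta_m$. Using a decomposition from \cite{Add12}, the event $\{\overline W-\underline W\geq m+3\}$ under the bridge forces the minimum of one of three half-length segments of the walk to drop below $-(m/3+1)$. Conditioning on $W_n=-1$ is then removed at cost $\sup_k \varphi_{n-\lfloor n/2\rfloor}(k)/\varphi_n(1)\leq C$ from the crude LLT (both times are of order $n$, so no refined estimate is needed), leaving an \emph{unconditioned} minimum over $[0,\lfloor n/2\rfloor]$ to which Proposition~\ref{prop:devRW} applies directly. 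The key idea you are missing is therefore: stop at a fixed time of order $n$ so that the LLT ratio is uniformly bounded, rather than summing over all first-passage times $k$, which forces you to control $\Pr{W_k=-m}$ for small $k$ where no useful uniform bound exists.
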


When $\sigma^{2}<\infty$, this result is established in \cite[Eq.~(32)]{ADS13} by using a sub-exponential upper bound due to Janson \cite{Jan06b} on $\Pr{W_{n}=-m}$ valid for every $n \geq 1$ and $m \geq 0$. In the infinite variance case, a similar bound has been established in \cite[Lemma 6.6]{CK13+} when $\mu(n) \sim C \cdot n^{-(1+\alpha)}$ as $n \rightarrow \infty$, but is not known to hold in general. For this reason, we combine Proposition \ref{prop:devRW} with results of \cite{Add12} for the proof of Theorem \ref{thm:borneW}.

A useful tool for the proof of Theorem \ref{thm:borneW} is the Vervaat transform, which we now introduce. For $\bx=(x_1, \ldots,x_n)
\in \Z^n$ and $i \in \Znz$, denote by $\bx^{(i)}$ the $i$-th
cyclic shift of $\bx$ defined by $x^{(i)}_k=x_{i+k \mod n}$ for $1
\leq k \leq n$. Let $n \geq 1$ be an integer and let $\bx=(x_1,\ldots,x_{n}) \in
\Z^n$. Set $w_j=x_1+\cdots + x_j$ for $1 \leq j \leq n$ and
let the integer $i_*(\bx)$ be defined by $i_*(\bx) = \inf \{ j \geq 1; w_j= \min_{1 \leq i \leq n} w_i \}$.
The Vervaat transform of $\bx$, denoted by $\V(\bx)$, is defined to
be $\bx^{(i_*(\bx))}$. The following fact is well known (see e.g. \cite{Pit06}):

\begin {prop}\label {prop:vervaat} Under the conditional probability distribution~$\Pr{ \, \cdot \, | \, W_{n}=-1}$, the vector $\V(W_1-W_{0}, \ldots, W_ n-W_{n-1})$ has the same distribution as $(W_1-W_{0}, \ldots, W_ n-W_{n-1})$  under the conditional probability distribution~$\Pr{ \, \cdot \, | \, \zeta_{1}=n}$.
\end {prop}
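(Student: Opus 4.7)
The plan is to combine the cyclic-shift invariance of the product increment measure with the cycle lemma.

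Write $\mathbf{X} = (W_1 - W_0, \ldots, W_n - W_{n-1})$ for the increment vector and let $\A$ denote the set of sequences $\bx = (x_1, \ldots, x_n)$ of integers $\geq -1$ summing to $-1$; let also $\mathcal{B}_n \subset \A$ be the subset consisting of those $\bx$ whose partial sums $w_j = x_1 + \cdots + x_j$ satisfy $w_j \geq 0$ for all $1 \leq j \leq n-1$. The conditional distribution of $\mathbf{X}$ under $\Pr{\,\cdot\,\mid\, W_n = -1}$ is then the normalized product measure $\bx \mapsto \prod_i \mu(x_i+1)/\Pr{W_n=-1}$ restricted to $\A$, while the conditional distribution of $\mathbf{X}$ under $\Pr{\,\cdot\,\mid\, \zeta_1 = n}$ is the analogous measure restricted to $\mathcal{B}_n$.

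The key combinatorial ingredient is the cycle lemma: for every $\bx \in \A$, the $n$ cyclic shifts $\bx^{(0)}, \ldots, \bx^{(n-1)}$ are pairwise distinct (any non-trivial period $p < n$ would force the sum over one period to equal the non-integer $-p/n$), and exactly one of them lies in $\mathcal{B}_n$, namely $\V(\bx)$. A direct verification, writing the partial sums of $\V(\bx)$ as $w_{i_* + k} - w_{i_*} \geq 0$ for $k \leq n - i_*$ and as $-1 - w_{i_*} + w_{k - n + i_*} \geq 0$ for $n - i_* < k < n$ (the positivity of the latter using that $w_{i_*}$ is the \emph{strict} minimum over $1 \leq j < i_*$), confirms that $\V(\bx) \in \mathcal{B}_n$; no other shift can belong to $\mathcal{B}_n$ because such a shift would attain a value $\leq w_{i_*}$ before step $n$.

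Since any two cyclic shifts of a sequence $\bx \in \A$ carry the same product weight $\prod_i \mu(x_i + 1)$, the previous step yields, for every $\by \in \mathcal{B}_n$,
\[
\Pr{\V(\mathbf{X}) = \by \,\big|\, W_n = -1} \;=\; \sum_{i=0}^{n-1} \Pr{\mathbf{X} = \by^{(i)} \,\big|\, W_n = -1} \;=\; \frac{n \prod_{k=1}^n \mu(y_k + 1)}{\Pr{W_n = -1}}.
\]
Summing this identity over $\by \in \mathcal{B}_n$ recovers Kemperman's hitting-time identity $\Pr{\zeta_1 = n} = \Pr{W_n = -1}/n$, after which both sides of the proposition assign the weight $\prod_k \mu(y_k + 1)/\Pr{\zeta_1 = n}$ to each $\by \in \mathcal{B}_n$, establishing the equality in distribution.

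The only genuinely subtle step is the cycle lemma itself — namely the distinctness of the $n$ cyclic shifts and the uniqueness of the one lying in $\mathcal{B}_n$ — both immediate consequences of the total drift being equal to $-1$; this is the classical argument that the paper defers to Pitman.
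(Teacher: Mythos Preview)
Your argument is correct and is precisely the classical cycle-lemma proof to which the paper defers by citing \cite{Pit06}; indeed the paper itself states and uses the same Cyclic Lemma (with $j=1$) a few lines later when proving Lemma~\ref{lem:shifts}. There is nothing to add: your identification of the preimage $\V^{-1}(\by)\cap\A$ with the full cyclic orbit of $\by$, combined with the shift-invariance of the product weight, is exactly the standard route.
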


\begin {proof}[Proof of Theorem \ref {thm:borneW}]To simplify notation, set $(\overline{X}_{1}, \ldots, \overline{X}_{n})=\V(W_1-W_{0}, \ldots, W_ n-W_{n-1})$. Noting that $$ \max_{1 \leq i \leq  n} \left( \overline{X}_{1}+ \cdots+\overline{X}_{i} \right) \leq  \max_ {1 \leq i \leq n} W_i  -  \min_ {1 \leq i \leq n} W_i \qquad  \textrm{on the event } W_{n}=-1,$$
Proposition \ref{prop:vervaat} gives that
\begin{eqnarray}
\label{eq:m0}
\Pr{ \max_ {1 \leq i \leq n} W_i \geq i+3  \ \big| \ \zeta_{1}=n} & \leq & \Pr{  \left. \max_ {1 \leq i \leq n} W_i  -  \min_ {1 \leq i \leq n} W_i \geq i+3  \, \right| \, W_n=-1}
\end{eqnarray}
for every $i \geq 1$ and $n \geq 1$.
Fix $m \geq i$. The proof of Eq.~(3) in \cite {Add12} shows that on the event $  \max_ {1 \leq i \leq n} W_i  -  \min_ {1 \leq i \leq n} W_i=m+3$, at least one of the following three events hold:
$$  \min_{0 \leq i \leq \lfloor n/2 \rfloor} (W_{\lfloor n/2 \rfloor}-W_{\lfloor n/2 \rfloor-i}) \leq -(m+3)/3, \qquad   \min_{0 \leq i \leq \lfloor n/2 \rfloor} (W_{n}-W_{n-i}) \leq -(m+3)/3$$
or $\min_{0 \leq i \leq \lfloor n/2 \rfloor} (W_i-W_{\lfloor n/2 \rfloor}) \leq -(m+3)/3$ (a close inspection indicates that condition d.~in \cite {Add12} should actually be $\max_{\lfloor n/2 \rfloor < i \leq n} S_{i}>(m+1)/3$ instead of  $\max_{\lfloor n/2 \rfloor < i \leq n} S_{i}>2(m+3)/3$). As a consequence, by monotonicity,
$$ \Pr {  \left.  \max_ {1 \leq i \leq n} W_i  -  \min_ {1 \leq i \leq n} W_i  \geq m+3  \, \right|  \, W_n=-1} \leq 3 \Pr { \min_ {0 \leq i \leq  \fl {n/2}} W_i \leq -(m/3+1) \, | \, W_n=-1}.$$
Then, setting $\varphi_n(j)=\Pr{W_{n}=-j}$ to simplify notation, the Markov property for the random walk $W$ applied at time $\fl{n/2}$ entails that 
$$\Pr { \min_ {0 \leq i \leq  \fl {n/2}} W_i \leq -( \frac{m}{3}+1) \, | \, W_n=-1} = \Es {1_ {  \left\{  \min_ {0 \leq i \leq  \fl {n/2}} W_i \leq -( \frac{m}{3}+1) \right\}}  \frac{ \varphi_ {n-  \fl {n/2}}( W_ { \fl {n/2} }+1 )}{ \varphi_n(1)}}.$$
But the local limit theorem \eqref{eq:LL} yields the existence of a constant $C>0$ such that 
$ { \varphi_{n-\fl{n/2}}(k)}/{\varphi_{n}(1)} \leq C$
 for every $n \geq 1$ and $k \in \Z$. Hence, by the previous estimates,
$$ \Pr { \max_ {1 \leq i \leq n} W_i \geq 3 u B_n  \ \big| \ \zeta_{1}=n } \leq  3C \Pr { \min_ {0 \leq i \leq  \fl {n/2}} W_i \leq -u B_n}  $$
for every $u \geq 0$ and $n \geq 1$. The conclusion then follows by an application of Proposition \ref {prop:devRW}.
\end {proof}

Recall from the Introduction that  $\X$ denotes the normalized excursion of a spectrally positive strictly stable Lévy process of index $\alpha$.  It is well known that the random walk $W$, conditionally on $\zeta_{1}=n$ and appropriately rescaled, converges  in distribution to $\X$ for the Skorokhod topology on $\D([0,1], \R)$ (see e.g.~\cite[Proof of Theorem 3.1]{Du03}):
$$ \left(  \frac{1}{B_{n}} W_{\fl{nt}} ; 0 \leq t \leq 1 \right)   \quad  \textrm{under} \quad  \Pr{ \, \cdot \, | \, \zeta_{1}=n}  \quad \mathop{\longrightarrow}^{(d)}_{n \rightarrow \infty} \quad ( \X_{t}; 0 \leq t \leq 1).
$$
Since the supremum is a continuous function on $\D([0,1], \R)$, we then get from Theorem \ref{thm:borneW} that for every $ \delta \in (0, \alpha/ ( \alpha-1))$ there exists $C_1,C_2>0$ such that for every $ u \geq 0$ and every $n \geq 1$:
$$ \Pr {\sup \X \geq u } \leq C_1 \exp (- C_2 u^ \delta).$$
It would be interesting to obtain an asymptotic expansion of $ \Pr {\sup \X \geq u }$ as $u \rightarrow  \infty$, similar to the one known for $\alpha=2$ (i.e. the Brownian excursion) involving a Theta function.

Finally, we will need a well-known result in the folklore of exchangeability (see \cite[Sec.~1]{Kin78}), for which we give a proof for completeness.
 Fix $n \geq 1$. A function $F: \R^{n} \rightarrow \R$ is said to be invariant under cyclic shifts if $F(\bx)=F(\bx^{(i)}) $ for every $\bx \in \mathbb{R}^{n}$ and $i \in \Z/ n \Z$. For $n \geq 1$, introduce $X_{n}=W_{n}-W_{n-1}$. Finally, recall that $\zeta_{j}= \inf \{n \geq 1: W_{n}=-j\}$. 

\begin{lem}\label{lem:shifts}Let $F: \R^{n} \rightarrow \R$ be a function invariant under cyclic shifts. Then
$$\Es{F(X_{1}, \ldots,X_{n}) \mathbbm{1}_{\zeta_{j}=n}}= \frac{j}{n} \Es{F(X_{1}, \ldots,X_{n}) \mathbbm{1}_{W_{n}=-j}}.$$
In particular,
\begin{equation}
\label{eq:eglaw}F(X_{1}, \ldots,X_{n})  \quad  \textrm{under} \quad  \Pr{ \, \cdot \, | \, \zeta_{j}=n}   \quad \mathop{=}^{(d)} \quad F(X_{1}, \ldots,X_{n})  \quad  \textrm{under} \quad  \Pr{ \, \cdot \, | \, W_{n}=-j}.
\end{equation}
\end{lem}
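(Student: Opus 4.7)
The natural tool is the cycle lemma (Dvoretzky--Motzkin): for any deterministic sequence $(x_1,\ldots,x_n)$ of integers $\geq -1$ with partial sums $w_k = x_1+\cdots+x_k$ satisfying $w_n = -j$ (with $1 \leq j \leq n$), the number of indices $i \in \{0,1,\ldots,n-1\}$ such that the $i$-th cyclic shift $(x_1^{(i)},\ldots,x_n^{(i)})$ has its running minimum first reach $-j$ exactly at time $n$ equals $j$. In our random walk context, $X_k = W_k - W_{k-1}$ takes values in $\{-1,0,1,2,\ldots\}$, so this applies directly. The plan is to combine the cycle lemma with the cyclic-shift invariance of both $F$ and the (i.i.d.) law of $(X_1,\ldots,X_n)$.

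The first step is to sum over all cyclic shifts. Writing $\bX^{(i)} = (X_{i+1},\ldots,X_{i+n})$ (indices mod $n$), each $\bX^{(i)}$ has the same distribution as $\bX = (X_1,\ldots,X_n)$ by exchangeability, and hence
\[
n\,\Es{F(\bX)\mathbbm{1}_{\zeta_j = n}} \;=\; \sum_{i=0}^{n-1} \Es{F(\bX^{(i)})\mathbbm{1}_{\{\zeta_j(\bX^{(i)})=n\}}}.
\]
The second step observes that on the event $\{W_n = -j\}$, the cycle lemma says the indicator $\sum_{i=0}^{n-1}\mathbbm{1}_{\{\zeta_j(\bX^{(i)})=n\}}$ equals exactly $j$, while on $\{W_n \neq -j\}$ it vanishes (all cyclic shifts have the same terminal value $W_n$). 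Using cyclic invariance $F(\bX^{(i)}) = F(\bX)$, the sum on the right-hand side above equals
\[
\Es{F(\bX)\sum_{i=0}^{n-1}\mathbbm{1}_{\{\zeta_j(\bX^{(i)})=n\}}} \;=\; j\,\Es{F(\bX)\mathbbm{1}_{W_n=-j}},
\]
yielding the identity after dividing by $n$.

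The ``in particular'' assertion \eqref{eq:eglaw} then follows by applying the first identity with $F$ replaced by $\phi \circ F$ for an arbitrary bounded measurable $\phi : \R \to \R$ (the composition remains cyclically invariant), and dividing by the normalizing identity obtained with $F\equiv 1$, namely $\Pr{\zeta_j = n} = (j/n)\Pr{W_n = -j}$. There is no real obstacle here; the only point requiring care is the verification of the cycle lemma counting statement (which is standard and relies crucially on the left-continuity of $W$, i.e.\ $X_k \geq -1$), and the observation that $\{W_n = -j\}$ is itself invariant under cyclic shifts so that the decomposition above is consistent.
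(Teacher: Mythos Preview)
Your proof is correct and follows essentially the same approach as the paper's own proof: both combine the i.i.d.\ (hence cyclic-shift-invariant) law of $(X_1,\ldots,X_n)$, the cyclic invariance of $F$, and the Cyclic Lemma (the count of ``good'' cyclic shifts equals $j$) to pass from the event $\{\zeta_j=n\}$ to the event $\{W_n=-j\}$. Your treatment of the ``in particular'' clause via composition with an arbitrary bounded measurable $\phi$ is slightly more explicit than the paper's, which simply takes $F\equiv 1$ to obtain Kemperman's formula and remarks that the distributional identity then follows.
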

This result will be later used to study the maximal outdegree of (a forest of) Bienaymé--Galton--Watson trees. Its proof uses the so-called Cyclic Lemma. Before stating it, we need to introduce some notation.
For $j \geq 1$, define:
$$\S_n^{(j)}= \left\{ (x_1,\ldots,x_n) \in \{-1,0,1,2, \ldots\}^n ; \,
\sum_{i=1}^n x_i=-j \right\}$$ and
$$\Sb_n^{(j)}=  \left\{ (x_1,\ldots,x_n) \in \S_n^{(j)} ; \, \sum_{i=1}^m x_i >-j
\textrm{ for all } m \in \{0,1,\ldots,n-1\} \right\}.$$ 
For $\bx \in \S_n^{(j)}$, finally set $\I_{\textrm{\bx}}= \{ i \in \Z/n\Z; \, \bx^{(i)} \in \Sb_n^{(j)}
\}$.
The so-called Cyclic Lemma states that we have $\Card (\I_{\textnormal{\bx}} )= j$ for every $\textnormal{\bx} \in
\S_n^{(j)}$ (see \cite[Lemma 6.1]{Pit06} for a proof). 

\begin{proof}[Proof of Lemma \ref{lem:shifts}]Set $\mathbf{{X}}_{n}=( { X}_{1}, \ldots, { X}_{n})$, and note that $W _{n}=-j$ if and only if $\mathbf{{X}}^{(i)}_{n} \in \S^{(j)}_{n}$ for a (or, equivalently, every) $i \in \Z/n \Z$, that $ \zeta_{j}=n$ if and only if $\mathbf{{X}}_{n} \in \Sb^{(j)}_{n}$ and finally that $\mathbf{{X}}^{(i)}_{n} $ has the same distribution as $\mathbf{{X}}_{n} $ for every $i \in \Z/n\Z$. Then write
\begin{eqnarray*}
\Es {F\left( { X}_{1}, \ldots, { X}_{ n} \right) \mathbbm {1}_{ \{  \zeta_{j}=n\} }} &=&\Es {F \left(\mathbf {{ X}}_{n}\right)  \mathbbm {1}_{ \left\{ \mathbf{{X}}_{n} \in \Sb^{(j)}_{n} \right\} } } = \frac{1}{n} \sum_{i=0}^{n-1}  \Es {F \left(\mathbf {{ X}}^{(i)}_{n}\right)  \mathbbm {1}_{ \left\{ \mathbf{{X}}^{(i)}_{n} \in \Sb^{(j)}_{n} \right\} } } \\
&=& \frac{1}{n} \sum_{i=0}^{n-1}  \Es {F \left(\mathbf {{ X}}_{n}\right)  \mathbbm {1}_{ \left\{ \mathbf{{X}}^{(i)}_{n} \in \Sb^{(j)}_{n} \right\} } } \\
&=& \frac{1}{n}  \Es {F \left(\mathbf {{ X}}_{n}\right)  \left( \sum_{i=0}^{n-1} \mathbbm {1}_{ \left\{ \mathbf{{X}}^{(i)}_{n} \in \Sb^{(j)}_{n} \right\} } \right)    \mathbbm {1}_{ \left\{ \mathbf{{X}}_{n} \in \S^{(j)}_{n} \right\} }}  \\
&=&  \frac{1}{n}  \Es {F \left(\mathbf{{ X}}_{n}\right) \I_{{\mathbf{{X}}_{n}}}   \mathbbm {1}_{ \left\{ \mathbf{{X}}_{n} \in \S^{(j)}_{n} \right\} }}= \frac{j}{n}  \Es {F \left(\mathbf {{ X}}_{n}\right)   \mathbbm {1}_{ \left\{ \mathbf{{X}}_{n} \in \S^{(j)}_{n} \right\} }}.
\end{eqnarray*}
For the third equality, we have used the fact that $F$ is invariant under cyclic shifts, and  for the last equality we have used the Cyclic Lemma, which tells us that $\Card(\I_{{\mathbf{{X}}_{n}}}) =j$ on the event $ \mathbf{{X}}_{n} \in \S^{(j)}_{n} $. This completes the proof of the first assertion.

The second one readily follows after noting that
\begin{equation}
\label{eq:Kemp} \Pr{\zeta_{j}=n}= \frac{j}{n} \cdot \Pr{W_{n}=-j}
\end{equation}by taking $F$ to be the constant function equal to $1$.  The identity \eqref{eq:Kemp} is often referred to as Kemperman's formula in the literature. 
\end{proof}

\subsection{Bridge estimates}

We now establish a tail estimate for the supremum of a stable Lévy bridge, which will allow us to see that the exponent $\alpha/(\alpha-1)$ is optimal in Theorem \ref{thm:borneW} and which is also of independent interest. We will in addition see that this gives a tail bound for the value of the stable excursion $\X$ evaluated at a uniform point.

Denote by $(X^{\br}_{s}; 0 \leq s \leq 1)$ the stable Lévy bridge of index $\alpha$, which is roughly speaking the $\alpha$-stable Lévy process normalized such that its law at time $1$ is $Y_{\alpha}$, and conditioned to return to $0$ at time $1$ (see \cite{Cha97} or \cite[Chapter VIII]{Ber96} for a rigorous construction). Recall that $d_{\alpha}$ denotes the density of $Y_{\alpha}$.

\begin{thm}\label{thm:Xbr}For every $u > 0$, we have
$$\Pr{ \sup_{0 \leq s \leq 1}X^{\br}_{s} \geq u}= |\Gamma(-1/\alpha)| \cdot u \int_{0}^{1} ds \frac{1}{s^{1/\alpha}(1-s)^{1+1/\alpha}} d_{\alpha} \left( \frac{u}{s^{1/\alpha}} \right)d_{\alpha} \left( - \frac{u}{(1-s)^{1/\alpha}} \right). $$ 
\end{thm}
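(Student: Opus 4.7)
My strategy is to use time-reversal to replace the supremum by an infimum and then exploit spectral positivity of $X$ together with Kendall's first-passage identity.

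The starting point is the standard time-reversal of L\'evy bridges: the processes $(X^{\br}_t)_{0 \leq t \leq 1}$ and $(-X^{\br}_{(1-t)-})_{0 \leq t \leq 1}$ have the same law. Since $X$ has only positive jumps, $X^{\br}_{s-} \leq X^{\br}_s$ pointwise, which readily gives $\inf_s X^{\br}_s = \inf_s X^{\br}_{s-}$; hence time-reversal yields
$$\Pr{\sup_{0 \leq s \leq 1} X^{\br}_s \geq u} \;=\; \Pr{\inf_{0 \leq s \leq 1} X^{\br}_s \leq -u} \;=\; \Pr{T^{\br}_{-u} \leq 1},$$
where $T^{\br}_{-u} := \inf\{s\in[0,1] : X^{\br}_s \leq -u\}$. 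The last equality uses spectral positivity once more, which forces the bridge to hit $-u$ continuously rather than overshoot, so that $\{\inf X^{\br} \leq -u\}$ coincides with $\{T^{\br}_{-u} \leq 1\}$ almost surely.

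Next, I would compute this hitting probability by removing the bridge conditioning. Write $p_t(x) := t^{-1/\alpha} d_\alpha(x/t^{1/\alpha})$ for the density of $X_t$ under the unconditioned law, and recall that $p_1(0) = d_\alpha(0) = |\Gamma(-1/\alpha)|^{-1}$. Let $T_{-u}$ denote the first-passage time to $-u$ under the unconditioned law, with density $\rho$, and let $g$ be the density of the sub-probability measure $y \mapsto \Pr{T_{-u} \leq 1, X_1 \in dy}$. Disintegrating the bridge against $X_1 = 0$ gives $\Pr{T^{\br}_{-u} \leq 1} = g(0)/p_1(0)$. The strong Markov property at $T_{-u}$ (with $X_{T_{-u}} = -u$ by spectral positivity) yields
$$g(y) \;=\; \int_0^1 \rho(s)\, p_{1-s}(y+u) \, ds,$$
and Kendall's first-passage identity for spectrally positive L\'evy processes (see e.g.~\cite[Chap.~VII]{Ber96}) states $\rho(s) = (u/s)\,p_s(-u)$. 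Specialising at $y = 0$ gives
$$g(0) \;=\; \int_0^1 \frac{u}{s}\, p_s(-u)\, p_{1-s}(u) \, ds.$$

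The change of variable $s \mapsto 1-s$ recasts the integral as $\int_0^1 \frac{u}{1-s} p_s(u) p_{1-s}(-u) \, ds$; dividing by $p_1(0)$ and expanding via the scaling relation $p_t(x) = t^{-1/\alpha}d_\alpha(x/t^{1/\alpha})$ produces exactly the formula of Theorem~\ref{thm:Xbr}. The main obstacle is implementing the disintegration of the bridge against the event $\{T_{-u} \leq 1\}$ rigorously; a regular-conditional-distribution argument, or a weak-limit argument through bridge approximations, suffices. A discrete alternative in the spirit of the earlier subsections, which sidesteps these technicalities entirely, is to work with the random-walk bridge (conditioning on $W_n = 0$, or on $\zeta_1 = n$ via Lemma~\ref{lem:shifts}), decompose by the last visit of $W$ to $\lceil u B_n \rceil$ --- well-defined because $W$ is skip-free downward --- apply Kemperman's formula~\eqref{eq:Kemp}, the discrete counterpart of Kendall's identity, to the post-visit descent, and pass to the limit using the local limit theorem~\eqref{eq:LL} and the convergence of the rescaled walk to $X^{\br}$.
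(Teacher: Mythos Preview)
Your proposal is correct, and in fact you describe two routes. Your \emph{discrete alternative}---last visit of the random-walk bridge to $\lceil u B_n\rceil$, time-reversal of the post-visit piece, Kemperman's formula \eqref{eq:Kemp}, then the local limit theorem~\eqref{eq:LL} and~\eqref{eq:cvBR}---is exactly the argument the paper carries out (the paper conditions on $W_n=-1$ and writes $T^{(n)}=\max\{i: W_i=\lfloor uB_n\rfloor\}$, truncating to $T^{(n)}\in[\eta n,(1-\eta)n]$ to control the boundary terms before letting $\eta\to 0$).

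Your primary, \emph{continuous} route is a genuinely different implementation of the same structural idea. Instead of taking limits from the discrete world, you work directly with the L\'evy bridge: time-reversal turns $\sup X^{\br}$ into $-\inf X^{\br}$, spectral positivity gives exact first passage $X^{\br}_{T^{\br}_{-u}}=-u$, the $h$-transform definition of the bridge yields $\Pr{T^{\br}_{-u}\leq t}=\int_0^t \rho(s)\,p_{1-s}(u)/p_1(0)\,ds$ for $t<1$, and Kendall's identity $\rho(s)=(u/s)p_s(-u)$ combined with $s\mapsto 1-s$ finishes. This is shorter and avoids both the discretisation and the $\eta$-truncation trick; the only care needed is the limit $t\uparrow 1$, which is harmless here since $X^{\br}_1=0>-u$ forces $T^{\br}_{-u}<1$ on $\{\inf X^{\br}\leq -u\}$. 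The paper's discrete approach, on the other hand, requires no continuum L\'evy machinery beyond~\eqref{eq:cvBR} and stays within the random-walk toolkit already developed in Section~\ref{sec:RW}. Both are valid; yours is more direct, the paper's more self-contained.
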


For $\alpha=2$, we will see in Remark \ref{rem:rayleigh} below that this quantity is actually equal to $e^{-u^{2}}$. However, for $\alpha \in (1,2)$ we have the following interesting asymptotic behavior:
\begin{cor}\label{cor:Xbr}For $ \alpha \in (1,2)$, we have
$$\Pr{ \sup_{0 \leq s \leq 1}X^{\br}_{s} \geq u}  \quad \mathop{\sim}_{u \rightarrow \infty} \quad  \frac{ |\Gamma(-1/\alpha)|}{\Gamma(-\alpha)} \cdot \frac{\alpha^{(4\alpha-1)/(2\alpha-2)}}{\sqrt{2\pi(\alpha-1)}}  \cdot u^{- \frac{(2+\alpha)(2\alpha-1)}{2(\alpha-1)}} \cdot e^{- (\alpha-1) \alpha^{-\alpha/(\alpha-1)} u^{\alpha/(\alpha-1)}}.$$
\end{cor}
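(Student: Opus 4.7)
}

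The starting point is the exact integral formula provided by Theorem \ref{thm:Xbr}. The plan is to perform a Laplace-method analysis of this integral using the precise tail asymptotics of the stable density $d_\alpha$. Specifically, I will rely on two classical facts, which I would state (and briefly justify) at the beginning:
\begin{enumerate}
\item[(a)] \emph{Right tail:} $d_\alpha(y) \sim \frac{1}{\Gamma(-\alpha)} y^{-1-\alpha}$ as $y \to +\infty$, obtained from the L\'evy measure $\nu(dx) = \Gamma(-\alpha)^{-1} x^{-1-\alpha}\,dx$ of the spectrally positive stable process with Laplace exponent $\lambda^\alpha$ (the coefficient is found by inverting $\int_0^\infty (e^{-\lambda x} - 1 + \lambda x)\nu(dx) = \lambda^\alpha$).
\item[(b)] \emph{Left tail:} by the saddle-point method applied to the Laplace inversion $d_\alpha(-y) = \frac{1}{2\pi i}\int_{c-i\infty}^{c+i\infty} e^{\lambda^\alpha - \lambda y}\,d\lambda$, with saddle at $\lambda_* = (y/\alpha)^{1/(\alpha-1)}$, one obtains
$$ d_\alpha(-y) \sim \frac{\alpha^{-1/(2(\alpha-1))}}{\sqrt{2\pi(\alpha-1)}} \cdot y^{(2-\alpha)/(2(\alpha-1))} \cdot e^{-(\alpha-1)\alpha^{-\alpha/(\alpha-1)}\, y^{\alpha/(\alpha-1)}}.$$
\end{enumerate}

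The key qualitative observation is that the exponential factor in the integrand of Theorem \ref{thm:Xbr} equals (up to $u$-independent polynomial factors) $\exp(-a_0\, u^{\alpha/(\alpha-1)} (1-s)^{-1/(\alpha-1)})$ with $a_0 = (\alpha-1)\alpha^{-\alpha/(\alpha-1)}$. As a function of $s \in (0,1)$ this is maximized at $s = 0$, and for $s$ bounded away from $0$ it produces an extra super-exponential factor $\exp(-c\, u^{\alpha/(\alpha-1)})$. Therefore, after splitting the integral at some small $\epsilon > 0$, I would show that the part on $[\epsilon, 1)$ is negligible compared to the target asymptotic, and concentrate the analysis on a neighborhood of $s = 0$.

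In that neighborhood I would perform the change of variables $s = u^{-\alpha/(\alpha-1)} t$. The expansion $(1-s)^{-1/(\alpha-1)} = 1 + s/(\alpha-1) + O(s^2)$ yields
$$\exp\!\bigl(-a_0 u^{\alpha/(\alpha-1)}(1-s)^{-1/(\alpha-1)}\bigr) = e^{-a_0 u^{\alpha/(\alpha-1)}} \cdot e^{-\alpha^{-\alpha/(\alpha-1)} t (1+o(1))},$$
uniformly for $t$ in compact subsets of $(0,\infty)$. Plugging in (a) for the factor $d_\alpha(u/s^{1/\alpha})$ (whose argument $u^{\alpha/(\alpha-1)} t^{-1/\alpha}$ tends to $+\infty$ uniformly on compacts in $t$) and (b) for the factor $d_\alpha(-u/(1-s)^{1/\alpha})$ (whose argument tends to $-u$), tracking all powers of $u$ and $t$ shows that the integrand times $ds$ converges, after factoring out the leading exponential and the correct power of $u$, to $\mathrm{const}\cdot t\, e^{-\alpha^{-\alpha/(\alpha-1)} t}\,dt$. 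A dominated convergence argument, justified by uniform Potter-type bounds on (a) and (b), then lets me pass to the limit and compute
$$\int_0^\infty t\, e^{-\alpha^{-\alpha/(\alpha-1)} t}\,dt = \alpha^{2\alpha/(\alpha-1)}.$$

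A final bookkeeping step collects the powers of $u$: the exponent is
$$1 + \frac{2-\alpha}{2(\alpha-1)} - (1+\alpha) - \frac{\alpha}{\alpha-1} = -\frac{(2+\alpha)(2\alpha-1)}{2(\alpha-1)},$$
and the constant simplifies, using $\alpha^{-1/(2(\alpha-1))} \cdot \alpha^{2\alpha/(\alpha-1)} = \alpha^{(4\alpha-1)/(2\alpha-2)}$, to the claimed
$$\frac{|\Gamma(-1/\alpha)|}{\Gamma(-\alpha)} \cdot \frac{\alpha^{(4\alpha-1)/(2\alpha-2)}}{\sqrt{2\pi(\alpha-1)}}.$$
The main technical hurdle I anticipate is establishing uniform, quantitative versions of the asymptotics (a) and (b) — in particular, dominating the integrand by an integrable function after the change of variables so that dominated convergence applies — and verifying that the $s \in [\epsilon,1)$ contribution is genuinely of smaller order than the polynomial correction $u^{-(2+\alpha)(2\alpha-1)/(2(\alpha-1))}$ that multiplies the leading exponential.
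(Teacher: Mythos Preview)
Your approach is correct and leads to the same answer, but it differs from the paper's in a way worth noting. The paper exploits the simple observation that for \emph{every} $s\in(0,1)$ one has $u/s^{1/\alpha}>u$ and $u/(1-s)^{1/\alpha}>u$, so the asymptotic expansions (a) and (b) can be substituted \emph{uniformly over the whole interval} $(0,1)$; this bypasses any splitting at $\epsilon$ and any dominated-convergence justification for the $d_\alpha$ factors. After this substitution the paper is left with an explicit integral $\int_0^1 \frac{s}{(1-s)^{\kappa}}\exp\bigl(-c_\alpha u^{\alpha/(\alpha-1)}(1-s)^{-1/(\alpha-1)}\bigr)\,ds$, which it handles via the change of variable $t=(1-s)^{-1/(\alpha-1)}$ and a Watson-lemma type estimate $\int_1^\infty t^{-1/2}(1-t^{-(\alpha-1)})e^{-xt}\,dt\sim(\alpha-1)x^{-2}e^{-x}$. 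Your localization-and-rescaling route ($s=u^{-\alpha/(\alpha-1)}t$) is the textbook Laplace method and is equally valid; the paper's route trades the dominated-convergence step for a one-line uniformity argument, which is why it reads more concisely.

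One minor slip: your displayed bookkeeping of the $u$-exponent, $1+\tfrac{2-\alpha}{2(\alpha-1)}-(1+\alpha)-\tfrac{\alpha}{\alpha-1}$, does \emph{not} actually equal $-\tfrac{(2+\alpha)(2\alpha-1)}{2(\alpha-1)}$; you are missing a second factor of $u^{-\alpha/(\alpha-1)}$ (the rescaling contributes one from $s$ and one from $ds$, for a total of $-\tfrac{2\alpha}{\alpha-1}$). With that correction the sum is right, and the final constant and exponent are as claimed.
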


\begin{proof}
For $\alpha \in (1,2)$, it is known that
$$ d_{\alpha}(x)  \quad \mathop{\sim}_{x \rightarrow \infty} \quad \frac{1}{\Gamma(-\alpha)} \cdot \frac{1}{x^{1+\alpha}}, \quad   d_{\alpha}(-x)  \quad \mathop{\sim}_{x \rightarrow \infty} \quad \frac{\alpha^{-1/(2\alpha-2)}}{\sqrt{2\pi(\alpha-1)}} x^{-1+ \frac{\alpha}{2(\alpha-1)}} e^{- (\alpha-1) \alpha^{-\alpha/(\alpha-1)} x^{\alpha/(\alpha-1)}}.$$
This follows from the first two terms of the asymptotic series \cite[Eq.~(2.5.4) and (2.5.17)]{Zol86}, since in the notation of the latter reference $d_{\alpha}(x)=g(x, \alpha,1)$ and $d_{\alpha}(-x)=g(x, \alpha,-1)$ (see Sec.~I.4. in \cite{Zol86} for the definition of $g$).

Since, for $s \in (0,1)$ we have $u/s^{1/\alpha}>u$ and $u/(1-s)^{1/\alpha}>u$, we can replace $d_{\alpha}(u/s^{1/\alpha})$ and $d_{\alpha}(-u/(1-s)^{1/\alpha})$ by their asymptotic equivalents and get that
\begin{eqnarray}
&& \Pr{ \sup_{0 \leq s \leq 1}X^{\br}_{s} \geq u}  \qquad \mathop{\sim}_{u \rightarrow \infty}  \qquad  | \Gamma(-1/\alpha)| \cdot  \frac{1}{\Gamma(-\alpha)} \cdot  \frac{\alpha^{-1/(2\alpha-2)}}{\sqrt{2\pi(\alpha-1)}} \cdot u^{ \frac{\alpha}{2(\alpha-1)}-1-\alpha} \notag \\
&& \qquad\qquad\qquad\qquad\qquad\qquad\qquad\qquad  \qquad  \qquad  \cdot \int_{0}^{1} ds \ \frac{s}{(1-s)^{ \frac{1}{2(\alpha-2)}+1}} e^{- c_{\alpha} \frac{u^{\alpha/(\alpha-1)}}{(1-s)^{1/(\alpha-1)}}}, \label{eq:calcul}
\end{eqnarray}
where $c_{\alpha}=(\alpha-1) \alpha^{-\alpha/(\alpha-1)}$. 
By making the change of variable $t=(1-s)^{-1/(\alpha-1)}$, we see that 
$$\int_{0}^{1} ds \ \frac{s}{(1-s)^{ \frac{1}{2(\alpha-1)}+1}} e^{- c_{\alpha} \frac{u^{\alpha/(\alpha-1)}}{(1-s)^{1/(\alpha-1)}}}=(\alpha-1) \int_{1}^{\infty} dt \ t^{-1/2} \left( 1- \frac{1}{t^{\alpha-1}} \right)  e^{-c_{\alpha} u^{\alpha/(\alpha-1)} t}.$$
It is a simple matter to check that 
$$ \int_{1}^{\infty} dt \ t^{-1/2} \left( 1- \frac{1}{t^{\alpha-1}} \right)  e^{-xt}  \quad \mathop{\sim}_{x \rightarrow \infty} \quad (\alpha-1) \frac{e^{-x}}{x^{2}}.$$
Hence, applying this with $x=c_{\alpha} u^{\alpha/(\alpha-1)}$,
$$\int_{0}^{1} ds \ \frac{s}{(1-s)^{ \frac{1}{2(\alpha-1)}+1}} e^{- c_{\alpha} \frac{u^{\alpha/(\alpha-1)}}{(1-s)^{1/(\alpha-1)}}} \quad \mathop{\sim}_{u \rightarrow \infty} \quad \alpha^{2\alpha/(\alpha-1)} \cdot u^{-2\alpha/(\alpha-1)}\cdot e^{- c_{\alpha} u^{\alpha/(\alpha-1)}}.$$
The desired estimate then follows from \eqref{eq:calcul}.\end{proof}

\begin{rem}\label{rem:rayleigh}Assume that $\alpha \in (1,2]$. For $0 \leq s \leq 1$, set
$$X^{\mathrm{br},(s)}_{t} = \begin{cases}  \Xbr_{s+t}-\Xbr_{s} & \textrm{ if } 0 \leq  t \leq 1-s \\
\Xbr_{s+t-1}-\Xbr_{s} & \textrm{ if } 1-s \leq t \leq 1.
\end{cases}$$
Set also $T^{\ast}= \inf \{t \in [0,1] : \Xbr_{t}= \inf  \Xbr \}$. It is well known that for every fixed $s \in [0,1]$, $X^{\mathrm{br},(s)}$ has the same law as $\Xbr$ and that  $X^{\mathrm{br},(T^{\ast})}$ has the same law as $\X$. In addition, if $U$ is a uniform random variable on $[0,1]$, independent of $X^{\br}$, then the random variable defined by
$$[T^{\ast}+U] \quad = \quad (T^{\ast}+U) \mathbbm{1}_{ \{T^{\ast}+U \leq 1\} } + (T^{\ast}+U-1) \mathbbm{1}_{T^{\ast}+U>1}$$
is uniform on $[0,1]$ and independent of $\Xbr$. Hence
$$ X^{\exc}_{U}   \,\, \mathop{=}^{(d)} \,\,  X^{\mathrm{br},(T^{\ast})}_{U} \,\, =  \,\, 
\Xbr_{[T^{\ast}+U]}- \inf \Xbr \,\,  \mathop{=}^{(d)} \,\,  \Xbr_{U}- \inf \Xbr \,\,  \mathop{=}  \,\,  -\inf X^{\mathrm{br},(U)} \,\, \mathop{=}^{(d)} \,\, - \inf \Xbr \,\,  \mathop{=}^{(d)} \,\, \sup \Xbr.$$
As a consequence, Theorem \ref{thm:Xbr} and Corollary \ref{cor:Xbr} hold with $\sup_{0 \leq s \leq 1}X^{\br}_{s}$ replaced by $\X_{U}$.

Also, in the case $\alpha=2$, we have $\X= \sqrt{2} \mathbbm{e}$, where $\mathbbm{e}$ is the normalized Brownian excursion. It is well known that $2 \mathbbm{e}_{U}$ is distributed as a Rayleigh random variable. Specifically, $\Pr{2 \mathbbm{e}_{U} \geq x}=e^{-x^{2}/2}$ for $x \geq 0$. In particular, for $\alpha=2$,
$$ \Pr{ \X_{U} \geq u}=\Pr{ \sup_{0 \leq s \leq 1}X^{\br}_{s} \geq u}= e^{-u^{2}}.$$
It is possible to check that Theorem \ref{thm:Xbr} gives indeed this expression for $\alpha=2$.

Finally,  since $X^{\mathrm{br},(T^{\ast})}$ has the same law as $\X$ and clearly $\sup X^{\mathrm{br},(T^{\ast})} \geq \sup \Xbr$, we have the inequality $\Pr{ \sup X^{\br}\geq u}  \leq \Pr{ \sup X^{\exc}\geq u}$ for every $u \geq 0$. Corollary \ref{cor:Xbr} thus  implies that the exponent $\alpha/(\alpha-1)$ is optimal in Theorem \ref{thm:borneW}.
\end{rem}

\begin{proof}[Proof of Theorem \ref{thm:Xbr}] Fix $u>0$. We will prove the result by a discrete approximation of the stable Lévy bridge. It is well-known that (see e.g.~\cite[Proposition 4.3]{Du03})
\begin{equation}
\label{eq:cvBR} \left(  \frac{1}{B_{n}} W_{\fl{nt}} ; 0 \leq t \leq 1 \right)   \quad  \textrm{under} \quad  \Pr{ \, \cdot \, | \, W_{n}=-1}  \quad \mathop{\longrightarrow}^{(d)}_{n \rightarrow \infty} \quad ( X^{\br}_{t}; 0 \leq t \leq 1).
\end{equation}
It is therefore enough to estimate $ \Pr{ \max_{0 \leq i \leq n} W_{i} \geq u B_{n} \ \big| \ W_{n}=-1 }$ for $u>0$ and $n \geq 1$. To this end, fix $ \eta \in (0,1)$, set $T^{(n)}= \max\{i \geq 0; W_{i}= \fl{u B_{n}} \}$. If $1 \leq j \leq n-1$, note that conditionally on the event $ \{T^{(n)}=j, W_{n}=-1\} $,  the sequence $(W_{n}-W_{n-i}; 1 \leq i \leq n-j)$ has the same distribution as $(W_i; 1 \leq i \leq n-j)$ conditionally on the event $ \{\zeta_{\fl{u B_{n}}+1}=n-j\} $.
Hence, using also \eqref{eq:Kemp}, we can write
\begin{eqnarray*}
&&  \Pr{ \max_{0 \leq i \leq n} W_{i} \geq u B_{n}, \eta n \leq T^{(n)} \leq (1-\eta) n \ \big| \ W_{n}=-1 }\\
&& \qquad\qquad\qquad  = \frac{1}{\Pr{W_{n}=-1}}\sum_{j=\eta n}^{(1-\eta)n} \Pr{W_{j}=\fl {u B_{n}}} \frac{1+\fl{u B_{n}}}{n-j} \Pr{W_{n-j}=-1-\fl{uB_{n}}}.\end{eqnarray*}
Leaving details to the reader (see e.g.~the proof of Theorem 3.1 in \cite{Kor12} for similar arguments), the dominated convergence theorem combined with the local limit theorem \eqref{eq:LL} yields
\begin{eqnarray*}
&&  \Pr{ \max_{0 \leq i \leq n} W_{i} \geq u B_{n}, \eta n \leq T^{(n)} \leq (1-\eta) n \ \big| \ W_{n}=-1 }\\
&& \qquad  \qquad   \mathop{\longrightarrow}_{n \rightarrow \infty} \quad \frac{u}{d_{\alpha}(0)} \int_{\eta}^{1-\eta} ds \frac{1}{s^{1/\alpha}(1-s)^{1+1/\alpha}} d_{\alpha} \left( \frac{u}{s^{1/\alpha}} \right)d_{\alpha} \left( - \frac{u}{(1-s)^{1/\alpha}} \right).
\end{eqnarray*}
In addition, by \eqref{eq:cvBR},
\begin{eqnarray*}
&&  \Pr{ \max_{0 \leq i \leq n} W_{i} \geq u B_{n}, T^{(n)} \not \in [\eta n, (1-\eta) n] \ \big| \ W_{n}=-1}  \\
&& \qquad\qquad\qquad\qquad  \qquad    \mathop{\longrightarrow}_{n \rightarrow \infty} \quad  \Pr{ \sup X^{\br} \geq u, \ \sup \{s \in [0,1]; X^{\br}_{s}=u \} \not \in [\eta n, (1-\eta) n]  }.
\end{eqnarray*}
By monotone convergence, the last probability tends to $0$ as $ \eta \rightarrow 1$. Therefore, for every fixed $\epsilon>0$, one may find $\eta\in (0,1)$ such that for every $n$ sufficiently large, 
$$ \left| \Pr{ \max_{0 \leq i \leq n} W_{i} \geq u B_{n} \ \big| \ W_{n}=-1 }-\Pr{ \max_{0 \leq i \leq n} W_{i} \geq u B_{n}, \eta n \leq T^{(n)} \leq (1-\eta) n \ \big| \ W_{n}=-1 } \right|  \leq \epsilon$$
and
$$ \frac{u}{d_{\alpha}(0)} \int_{[0,1] \backslash [\eta,1-\eta]} ds \frac{1}{s^{1/\alpha}(1-s)^{1+1/\alpha}} d_{\alpha} \left( \frac{u}{s^{1/\alpha}} \right)d_{\alpha} \left( - \frac{u}{(1-s)^{1/\alpha}} \right) \leq \epsilon.$$
Recalling that $d_{\alpha}(0)=|\Gamma(-1/\alpha)|^{-1}$, the conclusion readily follows.
\end{proof}

\section{Random walks and  Bienaymé--Galton--Watson trees}
\label{sec:GW}

We now explain how trees can be coded by different functions that allow to establish tail bounds for different statistics of large Bienaymé--Galton--Watson trees. In addition to Table \ref{tab:sec2}, it may be useful to refer to Table \ref{tab:sec3} to keep track of the main notation of this section.

\begin{table}[htbp]\caption{Table of the main notation and symbols appearing in Section \ref{sec:GW}.}
\centering
\begin{tabular}{c c p{10cm} }
\toprule
$\W^{\ast}(\tau)$ for $\ast \in \{\mathsf{lex},\mathsf{rev},\mathsf{bfs}\}$ & & The coding path of a tree $\tau$  obtained by using respectively the lexicographical, reverse-lexicographical and  breadth-first search ordering of the vertices of $ \tau$.\\
$W(\tau)$ && Width of a tree $\tau$. \\
$\Delta(\tau)$ && Maximum outdegree of a vertex of tree $\tau$.\\
 $\Delta^*( \X)$ && Maximum jump of $ \X$.\\
\bottomrule
\end{tabular}
\label{tab:sec3}
\end{table}

\subsection{Definitions}
  Denote by $\N = \{1, 2, \dots\}$  the set of the positive integers, set $\N^0 = \{\varnothing\}$ and let
\begin{equation*}
\mathcal{U} = \bigcup_{n \ge 0} \mathbb{N}^n.
\end{equation*}
For $u = (u_1, \dots, u_n) \in \mathcal{U}$, we denote by $|u| = n$ the length of $u$; if $n \ge 1$, we define $pr(u) = (u_1, \dots, u_{n-1})$ and for $i \ge 1$, we let $ui = (u_1, \dots, u_n, i)$; more generally, for $v = (v_1, \dots, v_m) \in \mathcal{U}$, we let $uv = (u_1, \dots, u_n, v_1, \dots, v_m) \in \mathcal{U}$ be the concatenation of $u$ and $v$. 
A plane rooted tree is a nonempty, finite subset $\tau \subset \mathcal{U}$ such that: (i) $\varnothing \in \tau$, (ii) if $u \in \tau$ with $|u| \ge 1$, then $pr(u) \in \tau$, (iii) if $u \in \tau$, then there exists an integer $k_u \ge 0$ such that $ui \in \tau$ if and only if $1 \le i \le k_u$. In the sequel, by tree we always mean plane rooted tree.

We will view each vertex $u$ of a tree $\tau$ as an individual of a population whose $\tau$ is the genealogical tree. The vertex $\varnothing$ is called the root of the tree and for every $u \in \tau$, $k_u=k_{u}(\tau)$ is the number of children (or outdegree) of $u$, $|u|$ is its generation, $pr(u)$ is its parent and more generally, the vertices $u, pr(u), pr \circ pr (u), \dots, pr^{|u|}(u) = \varnothing$ are its ancestors. If $u$ is an ancestor of $v$, we let $\llbracket u,v \rrbracket$ be the shortest path between $u$ and $v$. The size $|\tau|$ of a tree is its total number of vertices. If $\tau$ is a tree and
$u \in \tau$, we define the shift of $\tau$ at $u$ by $\theta_u \tau=\{v
\in U; \, uv \in \tau\}$, which is itself a tree.

Since $\mu$ is critical, the law of the Bienaymé--Galton--Watson tree with offspring distribution $\mu$ is the unique probability measure $\P_\mu$ on the set of all finite plane trees such that for every $j \geq 0$, $\Prmu{k_\emptyset=j}=\mu(j)$, and for every $j \geq 1$ with $\mu(j)>0$, the shifted trees
$\theta_1 \tau, \ldots, \theta_j \tau$ are independent under the conditional
probability $\Prmu{ \, \cdot \,|\, k_\emptyset=j}$ and their
conditional distribution is $\P_\mu$. A random tree whose distribution is $\P_\mu$ will be called a Bienaymé--Galton--Watson tree with offspring distribution $ \mu$, or in short a $\GW_\mu$ tree.

\subsection{Coding trees by left-continuous paths}

We associate with every ordering $u(0) \prec u(1) \prec \cdots \prec u(|\tau|-1)$ of the vertices of $ \tau$ a path $ \W(\tau)=( \W_n(\tau), 0 \leq n  \leq |\tau|)$  defined by $ \W_0(\tau)=0$ and for
$0 \leq n \leq |\tau|-1$:
$$ \W_{n+1}(\tau)= \W_{n}(\tau)+k_{u(n)}(\tau)-1.$$
Note that necessarily $ \W_{|\tau|}(\tau)=-1$.  As in \cite{ADS13}, we will use three different orderings of the vertices of a tree:
\begin{enumerate}
\item[(i)] the lexicographical ordering, where  $v \prec w$ if there exists $z \in \mathcal{U}$ such that $v = z(v_1, \dots, v_n)$, $w = z(w_1, \dots, w_m)$ and $v_1 < w_1$;
\item[(ii)] the reverse-lexicographical ordering, where  $v \prec w$ if there exists $z \in \mathcal{U}$ such that $v = z(v_1, \dots, v_n)$, $w = z(w_1, \dots, w_m)$ and $v_1 > w_1$;
\item[(iii)] the breadth-first search ordering, where $v \prec w$ if $|v| <|w|$ or  $v=z(v_1, \dots, v_m)$ and $w=z (w_1, \dots, w_m)$ with $v_{1}<w_{1}$.
\end{enumerate}
Denote by $\W^{\mathsf{lex}}(\tau),\W^{\mathsf{rev}}(\tau),\W^{\mathsf{bfs}}(\tau)$ the paths constructed by using respectively the lexicographical, reverse-lexicographical and  breadth-first search ordering of the vertices of $ \tau$. The path $\W^{\mathsf{lex}}(\tau)$ is commonly called the {\L}ukasiewicz path of $ \tau$.

We mention several useful elementary properties of these codings, which are left as an exercise to the reader.

\begin{lem}\label{lem:cod}Let $\tau$ be a tree of size $n$.
\begin{enumerate}
\item[(i)] For every $0 \leq i \leq n-1$, $\Wlex_{i}(\tau)$ is equal to the number of children of vertices of $\llbracket \emptyset, u(i) \llbracket$ branching to the right of $\llbracket \emptyset, u(i) \llbracket$.
\item[(ii)] For every $0 \leq i \leq n-1$, $\Wrev_{i}(\tau)$ is equal to the number of children of vertices of $\llbracket \emptyset, u(i) \llbracket$ branching to the left of $\llbracket \emptyset, u(i) \llbracket$.
\item[(iii)] For every $1 \leq i \leq n$,  setting $k=|u(i-1)|$, $\Wbfs_{i}(\tau)+1$ is equal to the number of children of vertices at generation $k$ less than or equal to $u(i-1)$ plus the number of vertices at generation $k$ greater than $u(i-1)$ (for the breadth-first search order).
\end{enumerate}
\end{lem}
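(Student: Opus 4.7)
The plan is to reduce all three parts to a single edge-counting identity and then identify the ``frontier'' of the exploration combinatorially in each case.

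The first observation is that in each of the three orderings $u(0)\prec u(1)\prec\cdots\prec u(|\tau|-1)$, the parent of any non-root vertex is visited strictly before that vertex (in the lex and reverse-lex traversals the parent is an ancestor on the current stack, and in the BFS traversal the parent lies at a strictly lower generation). Fix $i$ and set $V=\{u(0),\ldots,u(i-1)\}$. Every element of $V\setminus\{\emptyset\}$ therefore has its parent in $V$, so the number of edges of $\tau$ internal to $V$ equals $|V|-1=i-1$. Consequently, regardless of the ordering used,
$$\#\bigl\{\text{edges of }\tau\text{ from a vertex of }V\text{ to a vertex of }\tau\setminus V\bigr\}=\sum_{j=0}^{i-1}k_{u(j)}-(i-1)=\W_i(\tau)+1.$$
All that remains is to describe this frontier combinatorially in each of the three cases.

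For part (i), in the lexicographical (depth-first) exploration at time $i$ the frontier is the classical DFS stack: it consists of $u(i)$ itself, together with all children of strict ancestors of $u(i)$ whose sibling-index is strictly greater than that of the child lying on $\llbracket\emptyset,u(i)\rrbracket$---i.e.\ the children of vertices of $\llbracket\emptyset,u(i)\llbracket$ that branch strictly to the right of the ancestral chain. Subtracting the contribution of $u(i)$ from $\W_i+1$ yields the stated formula. Part (ii) follows at once by observing that the reverse-lexicographical ordering on $\tau$ coincides with the lexicographical ordering on the tree obtained from $\tau$ by reversing the child order at every vertex, which swaps ``left'' and ``right''.

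For part (iii), write $k=|u(i-1)|$ and note that $V$ consists of all vertices of generation strictly less than $k$ together with the initial segment of generation $k$ ending at $u(i-1)$. Since every vertex of generation $k+1$ has its parent at generation $k$, and every vertex of generation $k$ has its parent at generation $k-1\subset V$, the frontier splits into two disjoint pieces: (a) the vertices at generation $k$ that are strictly greater than $u(i-1)$ in the breadth-first order, and (b) the children (at generation $k+1$) of those vertices at generation $k$ that lie in $V$, i.e.\ that are $\le u(i-1)$. The sum of (a) and (b) is precisely the expression asserted to equal $\Wbfs_i+1$. No deep input is needed; the only care required is the off-by-one bookkeeping between $\W_i+1$ and the various vertex counts, and a careful verification of the boundary cases $i=0$ and $i=n$---this is the main (and only) obstacle.
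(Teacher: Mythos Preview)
Your argument is correct. The paper does not actually prove this lemma---it explicitly states that these properties ``are left as an exercise to the reader''---and your unified edge-counting approach (observing that in any parent-before-child ordering the boundary of $\{u(0),\ldots,u(i-1)\}$ has cardinality $\W_i+1$, then identifying that boundary combinatorially for each of the three traversals, with the mirror-symmetry shortcut for (ii)) is the standard way to establish such statements and is exactly the kind of verification the authors intended.
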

In particular, for every $\ast \in \{\mathsf{lex},\mathsf{rev},\mathsf{bfs}\}$, $\W^{\ast}_{k}(\tau) \geq 0$ for every $0 \leq k \leq |\tau|-1$. Also, recalling that $W(\tau)$ is the width of $\tau$, an immediate consequence of (iii) is that
\begin{equation}
\label{eq:bfs}W(\tau) \leq  \max \Wbfs(\tau)+1 \leq 2 W(\tau).
\end{equation}
Indeed, of $u(i)$ is the largest (for the breadth-first ordering) vertex at generation $k$ of $\tau$, then $\Wbfs_{i+1}(\tau)+1=Z_{k+1}(\tau)$, and if $|u(i)|=k$, then $\Wbfs_{i+1}(\tau)+1 \leq Z_{k}(\tau)+Z_{k+1}(\tau)$.

It is straightforward to adapt these codings to forests. By definition, a forest $F$ is a finite ordered collection of trees $(\tau_{1}, \ldots, \tau_{k})$. One naturally extends the different orderings to $F$, by declaring that $u\prec v$ if $u \in \tau_{i}, v \in \tau_{j}$ and $1 \leq i<j \leq k$, and the coding paths $\W^{\ast}(F)$ are obtained by concatenating the jumps of $\W^{\ast}(\tau_{1}), \ldots, \W^{\ast}(\tau_{k})$.

Recall the definition of the random walk $(W_{n})_{n \geq 0}$, the notation $\zeta_{j} =\inf\{n \geq 0; \, W_n=-j\}$ for $j \geq 1$, and that $\Pmuj$ denotes the law of a forest of $j$ independent $\GW_{\mu}$ trees. The following proposition explains the importance of the different codings.

\begin{prop}\label{prop:RW}Fix $1 \leq j \leq n$. For every $\ast \in \{\mathsf{lex},\mathsf{rev},\mathsf{bfs}\}$, under $\Pmuj$, $\W^{\ast}$ has the same distribution as  $(W_0,W_1,\ldots,W_{\zeta_{j}})$. In particular, the total progeny of a $\Pmuj$ forest has the same law as $\zeta_{j}$.\end{prop}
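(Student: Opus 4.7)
The plan is to first establish the result for a single tree ($j=1$) via a sequential exploration argument, and then extend it to general $j$ by concatenation. Since $\W^{\ast}(F)$ is, by definition, the concatenation of the coding paths of the $j$ independent $\GW_\mu$ trees constituting $F$, once the $j=1$ case is known, the strong Markov property of $W$ applied at the successive hitting times of $-1,-2,\ldots,-j$ immediately yields the general statement; the ``in particular'' assertion then follows because $|F|=\zeta_j$ under this coupling.

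For the case $j=1$, the key structural observation common to all three orderings $\ast\in\{\mathsf{lex},\mathsf{rev},\mathsf{bfs}\}$ is that $u(i)$ is a deterministic function of the outdegrees $k_{u(0)}(\tau),\ldots,k_{u(i-1)}(\tau)$ revealed so far. Indeed, at each step one maintains incrementally the ``frontier'' of not-yet-explored vertices, ordered a priori by $\prec$; the next vertex $u(i)$ is simply the $\prec$-minimal frontier vertex. For lex and rev-lex this is immediate (the frontier element added is always the leftmost, respectively rightmost, unexplored child of the current vertex), and for bfs it amounts to the standard observation that a bfs-ordered queue is updated locally as children of the just-visited vertex are appended to its tail.

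Starting from $u(0)=\varnothing$ and iterating the branching property of $\GW_\mu$ trees---conditionally on $k_\varnothing=k$, the subtrees $\theta_1\tau,\ldots,\theta_k\tau$ are i.i.d.\ $\GW_\mu$---applied successively to the subtrees rooted at frontier vertices, an induction on $i$ shows that, conditionally on $k_{u(0)}(\tau),\ldots,k_{u(i-1)}(\tau)$ and on the event that the frontier is nonempty, $k_{u(i)}(\tau)$ has law $\mu$ and is independent of the past. Consequently the increments $\W^\ast_{i+1}(\tau)-\W^\ast_i(\tau)=k_{u(i)}(\tau)-1$ are i.i.d.\ with law $\mu(\cdot+1)$, which matches the jump distribution of $W$. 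Finally, Lemma \ref{lem:cod} (and the subsequent remark) yields $\W^\ast_k(\tau)\geq 0$ for $0\leq k\leq|\tau|-1$ together with $\W^\ast_{|\tau|}(\tau)=-1$, so that under this coupling the exploration terminates exactly when the walk first reaches $-1$, i.e.\ at time $\zeta_1$.

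This is essentially a bookkeeping statement and I do not expect a substantive obstacle; the only delicate point is verifying that each of the three orderings is genuinely adapted to the filtration generated by the sequentially revealed outdegrees. This is transparent for lex and rev-lex, and for bfs it requires only a brief sanity check that the queue of frontier vertices, ordered by bfs, can be updated using solely the outdegree of the vertex just dequeued.
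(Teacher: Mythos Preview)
Your argument is correct and is precisely the standard sequential-exploration proof the paper has in mind: the paper itself gives no proof, simply stating ``We leave the proof to the reader (see \cite[Proposition 1.5]{LG05} for the case of the lexicographical ordering).'' Your sketch---adaptedness of each ordering to the filtration of revealed outdegrees, i.i.d.\ $\mu$ increments by the branching property, termination at $\zeta_1$ via Lemma~\ref{lem:cod}, then concatenation plus the strong Markov property for general $j$---is exactly what that deferred proof amounts to.
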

We leave the proof to the reader (see \cite[Proposition 1.5]{LG05} for the case of the lexicographical ordering).

\subsection{Bounds on the supremum of coding paths}
 \label{sec:boundluka}
 
  Recall that $\t_{n}$ is a $\GW_{\mu}$ tree conditioned on having $n$ vertices, with $\mu$ being a critical offspring distribution belonging to the domain of attraction of a stable law of index $\alpha \in (1,2]$. 

Since  for every $\ast \in \{\mathsf{lex},\mathsf{rev},\mathsf{bfs}\}$, $\W^{\ast}( \t_{n})$  has the same distribution as $(W_{0},W_{1}, \ldots,W_{n})$ under the conditional probability distribution~$\Pr{ \, \cdot \, | \, \zeta_{1}=n}$, the following tail bound on the maximum of $\W^{\ast}(\t_{n})$ is an immediate consequence of Theorem \ref{thm:borneW}.

\begin{thm}\label{thm:coding}For every $ \delta \in (0, \alpha/ ( \alpha-1))$, there exist $C_1,C_2>0$ such that for every $\ast \in \{\mathsf{lex},\mathsf{rev},\mathsf{bfs}\}$,  for every $ u \geq 0$ and every $n \geq 1$: 
$$ \Pr { \max_ {1 \leq i \leq n} \W^{\ast}_i( \t_n) \geq u B_n} \leq C_1 \exp (- C_2 u^ \delta).$$
\end{thm}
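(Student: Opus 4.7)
The plan is to observe that Theorem \ref{thm:coding} is essentially a direct translation of Theorem \ref{thm:borneW} via the coding of trees by left-continuous paths, and that the choice of ordering (lexicographical, reverse-lexicographical, or breadth-first search) does not matter because all three codings produce random walks with the same distribution.

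First I would invoke Proposition \ref{prop:RW} with $j=1$: under $\Pmu$, and for any $\ast \in \{\mathsf{lex}, \mathsf{rev}, \mathsf{bfs}\}$, the coding path $\W^{\ast}(\tau)$ has the same distribution as the killed random walk $(W_0, W_1, \ldots, W_{\zeta_1})$. In particular, the total progeny $|\tau|$ corresponds to $\zeta_1$. Conditioning on $|\tau|=n$ therefore yields that
$$\bigl(\W^{\ast}_0(\t_n), \W^{\ast}_1(\t_n), \ldots, \W^{\ast}_n(\t_n)\bigr) \quad \mathop{=}^{(d)} \quad (W_0, W_1, \ldots, W_n) \quad \text{under } \Pr{\,\cdot\,|\,\zeta_1 = n}.$$
In particular the distribution of $\max_{1 \le i \le n} \W^{\ast}_i(\t_n)$ does not depend on $\ast$, and it is equal to the distribution of $\max_{1 \le i \le n} W_i$ under the bridge-type conditioning $\Pr{\,\cdot\,|\,\zeta_1=n}$.

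Second, I would apply Theorem \ref{thm:borneW} directly to this conditioned random walk: for any $\delta \in (0, \alpha/(\alpha-1))$, there exist constants $C_1, C_2 > 0$ such that
$$\Pr{\max_{1 \le i \le n} W_i \ge u B_n \;\big|\; \zeta_1 = n} \le C_1 \exp(-C_2 u^{\delta})$$
for every $u \ge 0$ and $n \ge 1$, yielding the claimed estimate simultaneously for all three orderings.

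There is no real obstacle here: all the work has been done upstream. The nontrivial content is already packaged in Theorem \ref{thm:borneW}, whose proof relied on the Vervaat transform (Proposition \ref{prop:vervaat}) to pass from $\Pr{\,\cdot\,|\,\zeta_1=n}$ to $\Pr{\,\cdot\,|\,W_n=-1}$, followed by Addario-Berry's three-event decomposition and the unconditioned large-deviation bound of Proposition \ref{prop:devRW}, together with the local limit theorem \eqref{eq:LL} to compare $\varphi_{n-\lfloor n/2 \rfloor}(k)/\varphi_n(1)$. Once that is in hand, Proposition \ref{prop:RW} performs the translation from random walks to trees for free, and the agnosticism with respect to the ordering $\ast$ is built into Proposition \ref{prop:RW}.
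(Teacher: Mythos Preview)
Your proof is correct and matches the paper's own argument exactly: the paper states that Theorem~\ref{thm:coding} is an immediate consequence of Theorem~\ref{thm:borneW} via the distributional identity from Proposition~\ref{prop:RW} (with $j=1$), which is precisely what you do. The recap of the ingredients of Theorem~\ref{thm:borneW} in your last paragraph is accurate but unnecessary for the proof itself.
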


 In particular, Theorem \ref{thm:width}  follows from this result, since $W(\t_{n}) \leq  \Wbfs(\t_{n})+1$ by \eqref{eq:bfs}. Also note that \eqref{eq:D} entails that,  for every $\ast \in \{\mathsf{lex},\mathsf{rev},\mathsf{bfs}\}$, \begin{equation}
\label{eq:D2}\left( \frac{ \W^{\ast}_ { \fl {nt}}(\t_{n})} {B_n}; 0 \leq t \leq 1\right)  \quad\mathop{\longrightarrow}^ {(d)}_{n \rightarrow \infty} \quad \X.
\end{equation}
In addition, by \eqref{eq:bfs},  $ \lim_{n \rightarrow  \infty}\Pr{W(\t_{n}) \geq u B_{n}} \geq \Pr{ \sup \X \geq 2u}$ for every $u \geq 0$,
which, combined with the last part of Remark \ref{rem:rayleigh} shows that the exponent $\alpha/(\alpha-1)$ is optimal in Theorem \ref{thm:width}.

\subsection{Bounds on the maximum outdegree of Bienaymé--Galton--Watson trees}

We are now interested in the maximum outdegree  $ \Delta( \t_n)$ of $\t_{n}$, and establish Theorems \ref{thm:degree} and \ref{thm:degree2}. The first one is a corollary of the results we have just established. Indeed, it is clear that $ \Delta(\t_{n}) \leq  \max_ {1 \leq i \leq n} \W_i( \t_n)+1$, so Theorem \ref{thm:borneW}  entails Theorem \ref{thm:degree}.

In the case $ \alpha<2$, since the maximum jump $\Delta^*( \X)$ of $ \X$ is almost surely positive, we in addition get:

\begin{prop}Assume that $ \alpha \in (1,2)$. Then, for every $p \geq 1$, $$\Es {\frac{ \Delta ( \t_n)^p}{B_n^p} }  \quad\mathop{\longrightarrow}_{n \rightarrow \infty} \quad \Es{\Delta^*( \X)^p}.$$
\end{prop}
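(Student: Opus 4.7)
The plan is to combine the already-established convergence in distribution $\Delta(\t_n)/B_n \to \Delta^*(\X)$ with a uniform integrability argument driven by the sub-exponential tail bound of Theorem \ref{thm:degree}.

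First I would recall that $\Delta(\t_n)-1$ is exactly the largest jump of $\W^{\mathsf{lex}}(\t_n)$, so after rescaling by $B_n$ it is the largest jump of the c\`adl\`ag process $(\W^{\mathsf{lex}}_{\fl{nt}}(\t_n)/B_n)_{0 \le t \le 1}$. The largest jump is a continuous functional on $\D([0,1],\R)$ equipped with the Skorokhod $J_1$ topology (\cite[Prop.~2.4, Chap.~VI]{JS03}), so from \eqref{eq:D2} we deduce
$$\frac{\Delta(\t_n)}{B_n} \quad \mathop{\longrightarrow}^{(d)}_{n \to \infty} \quad \Delta^*(\X).$$
Note that since $\alpha \in (1,2)$, the limit $\Delta^*(\X)$ is almost surely positive and has a nondegenerate distribution.

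Next I would upgrade this convergence in distribution to convergence of the $p$-th moments by showing that the family $\bigl((\Delta(\t_n)/B_n)^p\bigr)_{n \ge 1}$ is uniformly integrable. Pick any $\delta \in (0, \alpha/(\alpha-1))$ and apply Theorem \ref{thm:degree}: for some constants $C_1, C_2 > 0$,
$$\Es{\left( \frac{\Delta(\t_n)}{B_n} \right)^{p+1}} = (p+1)\int_0^\infty u^p \Pr{\Delta(\t_n) \ge u B_n}\, du \le (p+1) C_1 \int_0^\infty u^p \exp\bigl(-C_2 u^\delta\bigr)\, du < \infty,$$
where the last integral is finite and does not depend on $n$. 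Thus $\sup_{n \ge 1} \Es{(\Delta(\t_n)/B_n)^{p+1}} < \infty$, which implies that $(\Delta(\t_n)/B_n)^p$ is uniformly integrable.

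Combining uniform integrability with convergence in distribution yields convergence of expectations, which is exactly the desired statement. The argument is essentially routine once Theorem \ref{thm:degree} is in hand; no genuine obstacle arises here, since the sub-exponential tail bound is strong enough to control all polynomial moments simultaneously (in fact the same proof would give convergence of any moment, including non-integer ones).
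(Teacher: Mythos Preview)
Your proof is correct and follows the same approach as the paper: establish the convergence in distribution $\Delta(\t_n)/B_n \to \Delta^*(\X)$ via continuity of the largest-jump functional applied to \eqref{eq:D2}, then invoke the tail bound of Theorem~\ref{thm:degree} to get uniform integrability and hence convergence of moments. The paper's proof is terser (it simply says ``the claim then follows from Theorem~\ref{thm:degree}''), but you have spelled out exactly the uniform integrability argument that is implicit there.
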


\begin{proof}Observe that $\Delta ( \t_n)-1$ is equal to the maximum jump of $\W(\t_{n})$. Since the largest jump of a càdlàg function is a continuous functional on $\D([0,1], \R)$, the convergence \eqref{eq:D2} implies 
\begin{equation}
\label{eq:cvd}\frac{ \Delta ( \t_n)}{B_n}  \quad\mathop{\longrightarrow}^ {(d)}_{n \rightarrow \infty} \quad  \Delta^*( \X).
\end{equation} 
The claim then follows from Theorem \ref{thm:degree}.
\end{proof}

For $p=1$, it is known \cite[Prop.~3.10]{CK14} that $ \Es {\Delta^*( \X)}=  \Gamma\left(1-  \frac{1}{ \alpha} \right) \beta$, where $ \beta>0$ is the unique solution to the equation
$$ \sum_{n=0}^ \infty \frac{(-1)^n  \beta^n}{(n- \alpha) n!}=0.$$
It would be interesting to calculate the value of $ \Es{\Delta^*( \X)^p}$ for $p>1$ and also to obtain  asymptotic expansions of $ \Pr{\Delta^*( \X)>u}$ and  $ \Pr{\Delta^*( \X)<1/u}$ as $u \rightarrow  \infty$.

We now establish Theorem \ref{thm:degree2}.

\begin {proof}[Proof of Theorem \ref{thm:degree2}.] In this proof, $C>0$ will denote a constant which may change from line to line (but does not depend on $n$ or $k$). Fix $1 \leq j \leq n$.
For $\bx=(x_1, \ldots,x_n)
\in \Z^n$, set $ \max( \bx)= \max_ {1 \leq i \leq n} x_i$ and note that $\max$ is invariant under cyclic shifts. Recall that $X_{n}=W_{n}-W_{n-1}$ for $n \geq 1$. Finally, to simplify notation, set $M_i^j= \max(X_i,X_ {i+1}, \ldots, X_j)$  for every $1 \leq i \leq j \leq n$. If $F$ is a forest, observe that $\Delta(F)-1$ is equal to the maximum jump of the associated path $\W^{\ast}(F)$ with $\ast \in  \{\mathsf{lex},\mathsf{rev},\mathsf{bfs}\}$. It follows from by \eqref{eq:eglaw} and Proposition \ref {prop:RW} that
\begin{equation}\label{eq:eg}
\Esmuj{ G( \Delta(F)) \ \big| \ |F|=n}  = \Es{ G \left( M_{1}^{n} +1\right) \, \big| \, W_n=-j}
\end{equation}
for every measurable function $G: \R \rightarrow \R_{+}$.

We start by proving \eqref{eq:bd1}.  Note that $ \Pr { M_1^n \leq k-1  \, | \, W_n=-j} \leq  \Pr { M_1^ { \fl{n/2}} \leq k-1  \, | \, W_n=-j}$.
Recalling that  $\varphi_n(j)=\Pr{W_{n}=-j}$, the Markov property for the random walk $W$ applied at time $ \fl {n/2}$ yields:
$$\Prmuj{ \Delta(F) \leq  k \ | \ |F|=n} \leq  \Pr { M_1^ { \fl{n/2}} \leq k-1  \, | \, W_n=-j}= \Es{  \mathbbm {1}_ {  \{  M_1^ { \fl{n/2}} \leq k-1\}}  \frac{ \varphi_ {n-  \fl {n/2}}(W_ { \fl {n/2}}+j)}{ \varphi_n(j)}}.$$
By the local limit theorem \eqref{eq:LL}, there exists a constant $C>0$ such that
\begin{equation}
\label{eq:ll}\sup_{1 \leq j \leq M B_{n}} \frac{ \varphi_ {n-  \fl {n/2}}(W_ { \fl {n/2}}+j)}{ \varphi_n(j)} \leq C.
\end{equation}
Hence
\begin{eqnarray*}
\sup_{1 \leq j \leq M B_{n}} \Prmuj { \Delta( F) \leq  k \ | \ |F|=n}  \leq  C  \Pr { M_1^ { \fl{n/2}} \leq k-1} &=&  C  \Pr {X_1 \leq k-1}^ { \fl {n/2}} \\
&=&C_1 (1-  {\mu}([k+1,\infty)))^  { \fl {n/2}}.
\end{eqnarray*}
The inequality $ (1-x)^a \leq  \exp(-ax)$ valid for every $a \geq 0$ and $ x \in[0,1]$ yields \eqref{eq:bd1}. 

To check \eqref{eq:bd2}, write:
\begin{eqnarray*}
\Pr {M_1^n \geq  k-1  \, | \, W_n=-j} 
& \leq& \Pr { M_1^ { \fl{n/2}} \geq  k-1  \, | \, W_n=-j} +  \Pr { M_ { \fl {n/2}+1}^{n} \geq  k-1  \, | \, W_n=-j} \\
 & \leq & 2  \Pr {M_1^{ \fl{n/2}+1} \geq  k-1  \, | \, W_n=-j}
\end{eqnarray*}
As before, using the Markov property for the random walk $W$ applied at time $ \fl{n/2}+1$ combined with \eqref{eq:ll}, we get that
$$ \sup_{1 \leq j \leq M B_{n}} \Prmuj{ \Delta(F) \geq   k \ | \ |F|=n} \leq C \Pr { M_1^ { \fl{n/2}+1} \geq k-1}= C \left( 1-(1-  {\mu}([k,\infty)))^ { \fl {n/2}+1} \right).$$
The inequality $ 1-(1-x)^ a \leq   a x$ valid for every $ x \in [0,1]$ and $ a \geq 1$ shows \eqref{eq:bd2}. This completes the proof.
\end {proof}

\begin{cor} \label {cor:cor1}Let $(p_n)_ { n \geq 1}$ and $ (q_n)_ {n \geq 1}$ and be two sequence of real numbers such that $n {\mu}([p_n,\infty)) \rightarrow \infty$ and  $ n  {\mu}([ q_n,\infty)) \rightarrow 0$ as $ n \to \infty$. Then:
$$ \Pr{ p_n \leq \Delta ( \t_n) \leq   q_n}  \quad\mathop{\longrightarrow}_{n \rightarrow \infty} \quad 1$$
\end {cor}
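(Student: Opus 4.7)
The plan is to deduce both inequalities directly from Theorem \ref{thm:degree2} applied with $j=1$, which is legitimate since $\t_n$ under $\Pmu(\,\cdot\,|\,|\tau|=n)$ has the same law as a $\Pmuj$-forest conditioned on total size $n$ when $j=1$, and the hypothesis $1\le j\le MB_n$ holds for any fixed $M>0$ once $n$ is large enough.

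First I would handle the lower bound. Splitting $\{\Delta(\t_n)<p_n\}=\{\Delta(\t_n)\le \lceil p_n\rceil-1\}$ and applying \eqref{eq:bd1} with $k=\lceil p_n\rceil-1$ gives
\[
\Pr{\Delta(\t_n)<p_n}\;\le\;C_1\exp\!\left(-C_2\, n\,\mu([\lceil p_n\rceil,\infty))\right)\;\le\;C_1\exp\!\left(-C_2\, n\,\mu([p_n,\infty))\right),
\]
which tends to $0$ by the assumption $n\mu([p_n,\infty))\to\infty$.

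Next I would handle the upper bound. Applying \eqref{eq:bd2} with $k=\lfloor q_n\rfloor+1$ yields
\[
\Pr{\Delta(\t_n)>q_n}\;\le\;\Pr{\Delta(\t_n)\ge \lfloor q_n\rfloor+1}\;\le\;C_1\, n\,\mu([\lfloor q_n\rfloor+1,\infty))\;\le\;C_1\, n\,\mu([q_n,\infty)),
\]
which tends to $0$ by the assumption $n\mu([q_n,\infty))\to 0$. Combining the two estimates gives $\Pr{p_n\le \Delta(\t_n)\le q_n}\to 1$.

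There is no serious obstacle here: the only mild care needed is to justify that $j=1$ falls inside the range $1\le j\le MB_n$ required by Theorem \ref{thm:degree2} (trivially true for all $n\ge 1$ as soon as $M\ge 1$, since $B_n\to\infty$), and to pass from integer arguments in \eqref{eq:bd1}--\eqref{eq:bd2} to arbitrary real thresholds $p_n,q_n$ via the monotonicity of $k\mapsto\mu([k,\infty))$, as done above.
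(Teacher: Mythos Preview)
Your proof is correct and is precisely the argument the paper intends: the corollary is stated immediately after Theorem~\ref{thm:degree2} and follows from it with $j=1$ exactly as you describe (the paper does not spell out the details). The only additional content in the paper is the remark that the same conclusion can alternatively be deduced from the distributional convergence \eqref{eq:cvd}, by checking via the Potter bounds that $p_n/B_n\to 0$ and $q_n/B_n\to\infty$; your route via \eqref{eq:bd1}--\eqref{eq:bd2} is the more direct one.
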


This partially answers  \cite[Problem 19.30] {Jan12}. Note also that \eqref{eq:cvd} is stronger than Corollary \ref {cor:cor1}. Indeed, using the Potter bounds, it is possible to verify that if  $(p_n)_ {n \geq 1}$ and  $(q_n)_ {n \geq 1}$ are two sequences of positive real numbers such that $n {\mu}([p_n,\infty)) \rightarrow \infty$ and  $ n  {\mu}([ q_n,\infty)) \rightarrow 0$ as $ n \to \infty$, then $B_n/p_n \rightarrow \infty$ and $B_n/q_n \rightarrow 0$ as $n \rightarrow \infty$. By \eqref{eq:cvd}, this indeed implies that $ \Pr{ p_n \leq \Delta ( \t_n) \leq   q_n}  \rightarrow 1$.

\subsection{Application to non-crossing trees}

Let $P_{n}$ be the convex polygon inscribed in the unit disk of the complex plane whose vertices are the $n$-th roots of unity. By definition,
a \emph{non-crossing
tree} of $P_n$ is a tree drawn on the plane whose vertices are the of $P_n$ and whose edges are non-crossing line
segments. See Fig.\,\ref{fig1} for an example.

\begin{figure}[h!]
\begin {center}
\definecolor{qqttcc}{rgb}{0,0.2,0.8}
\definecolor{ccqqcc}{rgb}{0.8,0,0.8}
\definecolor{ttqqzz}{rgb}{0.2,0,0.6}
\begin{tikzpicture}[line cap=round,line join=round,>=triangle 45,x=1.0cm,y=1.0cm,scale=0.5,rotate=15]
\clip(-4.38,-3.53) rectangle (4.8,5.21);
\draw [dotted,color=ccqqcc] (4,0)-- (4,2);
\draw [dotted,color=ccqqcc] (4,2)-- (3,3.73);
\draw [dotted,color=ccqqcc] (3,3.73)-- (1.27,4.73);
\draw [dotted,color=ccqqcc] (1.27,4.73)-- (-0.73,4.73);
\draw [dotted,color=ccqqcc] (-0.73,4.73)-- (-2.46,3.73);
\draw [dotted,color=ccqqcc] (-2.46,3.73)-- (-3.46,2);
\draw [dotted,color=ccqqcc] (-3.46,2)-- (-3.46,0);
\draw [dotted,color=ccqqcc] (-3.46,0)-- (-2.46,-1.73);
\draw [dotted,color=ccqqcc] (-2.46,-1.73)-- (-0.73,-2.73);
\draw [dotted,color=ccqqcc] (-0.73,-2.73)-- (1.27,-2.73);
\draw [dotted,color=ccqqcc] (1.27,-2.73)-- (3,-1.73);
\draw [dotted,color=ccqqcc] (3,-1.73)-- (4,0);
\draw [line width=1.2pt,color=qqttcc] (4,2)-- (-2.46,-1.73);
\draw [line width=1.2pt,color=qqttcc] (-2.46,-1.73)-- (-0.73,-2.73);
\draw [line width=1.2pt,color=qqttcc] (-2.46,-1.73)-- (3,-1.73);
\draw [line width=1.2pt,color=qqttcc] (3,-1.73)-- (1.27,-2.73);
\draw [line width=1.2pt,color=qqttcc] (3,-1.73)-- (4,0);
\draw [line width=1.2pt,color=qqttcc] (-3.46,2)-- (-2.46,-1.73);
\draw [line width=1.2pt,color=qqttcc] (-3.46,2)-- (-3.46,0);
\draw [line width=1.2pt,color=qqttcc] (-3.46,2)-- (3,3.73);
\draw [line width=1.2pt,color=qqttcc] (3,3.73)-- (1.27,4.73);
\draw [line width=1.2pt,color=qqttcc] (3,3.73)-- (-0.73,4.73);
\draw [line width=1.2pt,color=qqttcc] (3,3.73)-- (-2.46,3.73);
\begin{scriptsize}
\fill [color=ttqqzz] (4,0) circle (2.5pt);
\fill [color=ttqqzz] (4,2) circle (2.5pt);
\fill [color=ttqqzz] (3,3.73) circle (2.5pt);
\fill [color=ttqqzz] (1.27,4.73) circle (2.5pt);
\fill [color=ttqqzz] (-0.73,4.73) circle (2.5pt);
\fill [color=ttqqzz] (-2.46,3.73) circle (2.5pt);
\fill [color=ttqqzz] (-3.46,2) circle (2.5pt);
\fill [color=ttqqzz] (-3.46,0) circle (2.5pt);
\fill [color=ttqqzz] (-2.46,-1.73) circle (2.5pt);
\fill [color=ttqqzz] (-0.73,-2.73) circle (2.5pt);
\fill [color=ttqqzz] (1.27,-2.73) circle (2.5pt);
\fill [color=ttqqzz] (3,-1.73) circle (2.5pt);
\end{scriptsize}
\end{tikzpicture}
 \caption{ \label{fig1} A non-crossing tree of $P_ {12}$.}
 \end{center}
\end {figure}
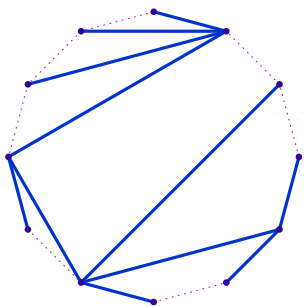
Different combinatorial properties of non-crossing trees have been studied in the literature.  Let $\mathcal {T}_n$ be  uniformly distributed over the set of all non-crossing trees of $P_n$. Deutsch \& Noy \cite {DN02} have established that, for every $c>0$, $  \Pr {   | \Delta (\mathcal {T}_n) - \log_3 n| \leq  (1+c) \log_3 \log_3 n }$ tends to $1$ as $n \rightarrow \infty$.
Using Theorem \ref {thm:degree}, we improve these bounds:

\begin{thm} \label {thm:nc}For every $c>0$, there exists constants $C_{1},C_{2}>0$ such that for every $n \geq 1$,
$$ \Pr{\Delta (\mathcal {T}_n) \geq \log_3 n + \log_3 \log_3 n +  c \log_3 \log_3 n} \leq \frac{C_{1} }{(\log_{3} n)^{c}}$$
and
$$\Pr{\Delta (\mathcal {T}_n) \leq  \log_3 n + \log_3 \log_3 n -  c \log_3 \log_3 n} \leq C_{1} e^{-C_{2} (\log_{3}n)^{c}}.$$
\end{thm}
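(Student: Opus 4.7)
The plan is to identify $\mathcal{T}_n$, once suitably rooted, with a conditioned Bienaym\'e--Galton--Watson tree and then apply Theorem \ref{thm:degree2}. Building on the generating function analysis of \cite{DN02}, the uniform non-crossing tree $\mathcal{T}_n$ of $P_n$, viewed as a plane rooted tree via a canonical rooting procedure, has the same law as a $\GW_\mu$ tree conditioned to have $n$ vertices, for some critical offspring distribution $\mu$ of finite variance whose probability generating function has a double-pole singularity at $s=3$. In particular $\alpha=2$, $B_n=\Theta(\sqrt{n})$, and
\[
\mu\bigl([k,\infty)\bigr) \;\sim\; \gamma\cdot k\cdot 3^{-k} \qquad \text{as } k\to\infty
\]
for some constant $\gamma>0$. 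The extra factor of $k$ (beyond a purely geometric tail) is precisely what produces the matching $\log_3\log_3 n$ corrections appearing in both bounds.

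With this identification, Theorem \ref{thm:degree2} applies with $M=1$ and $j=1$ (legitimate since $B_n\to\infty$). For the upper bound, set $k_n=\lceil \log_3 n+(1+c)\log_3\log_3 n\rceil$; then
\[
n\cdot\mu\bigl([k_n,\infty)\bigr) \;\sim\; \gamma\cdot n\cdot k_n\cdot 3^{-k_n} \;\sim\; \frac{\gamma\,\log_3 n}{(\log_3 n)^{1+c}} \;=\; \frac{\gamma}{(\log_3 n)^{c}},
\]
so \eqref{eq:bd2} yields $\Pr{\Delta(\mathcal{T}_n)\geq k_n}\leq C_1/(\log_3 n)^c$. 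For the lower bound, set $k_n=\lfloor \log_3 n+(1-c)\log_3\log_3 n\rfloor$; a symmetric computation gives $n\cdot\mu([k_n+1,\infty))\sim(\gamma/3)(\log_3 n)^c$, and \eqref{eq:bd1} yields $\Pr{\Delta(\mathcal{T}_n)\leq k_n}\leq C_1\exp(-C_2(\log_3 n)^c)$.

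The main obstacle is to pin down the precise tail $\mu([k,\infty))\asymp k\cdot 3^{-k}$: both the exponential rate $\log 3$, which is forced by the exponential growth rate $27/4$ of $t_n=\frac{1}{2n-1}\binom{3n-3}{n-1}$ paired with the universal $n^{-3/2}$ prefactor of critical finite-variance GW trees, and the polynomial prefactor $k$, which reflects the double-pole nature of the singularity of the PGF of $\mu$. Without the $k$-factor, the two exponents in the theorem would differ by one; everything else is a routine asymptotic computation once Theorem \ref{thm:degree2} is in hand.
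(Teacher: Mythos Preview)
Your overall strategy matches the paper's: identify the offspring tail $\mu([k,\infty))\asymp k\cdot 3^{-k}$ and feed the two thresholds into Theorem~\ref{thm:degree2}. The asymptotic computations you sketch for $n\mu([k_n,\infty))$ are correct, and the paper in fact gives the closed form $\mu(k)=4(k+1)/3^{k+2}$, hence $\mu([k,\infty))=(3+2k)/3^{1+k}$, so there is no need to argue indirectly via singularity analysis.

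There is, however, a genuine gap in your identification step. By \cite{MP02} (not \cite{DN02}), the rooted uniform non-crossing tree is a \emph{modified} $\GW_\mu$ tree conditioned on size $n$: the root has offspring distribution $\lambda(k)=2/3^k$ for $k\ge 1$, while all other vertices reproduce according to $\mu$. It is \emph{not} a pure $\GW_\mu$ tree conditioned on its size, so you cannot simply invoke Theorem~\ref{thm:degree2} with $j=1$. The paper handles this by conditioning on the root degree $k_\emptyset=j$: given this event and $|\widetilde{\mathcal T}|=n$, the subtrees pendant at the root form a $\P_{\mu,j}$ forest conditioned on having $n-1$ vertices. One then bounds $\Pr{k_\emptyset\ge\sqrt{n}\mid |\widetilde{\mathcal T}|=n}$ separately (it decays faster than any of the target bounds, via Kemperman's formula and the local limit theorem), and on the complementary event applies Theorem~\ref{thm:degree2} uniformly over $1\le j\le\sqrt{n}$. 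This is precisely why the theorem is stated for forests with $j$ up to $MB_n$, and your proposal to take $M=1$, $j=1$ does not engage with the actual structure of the problem.
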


In particular, for every $c>0$, we have $$  \Pr {|\Delta (\mathcal {T}_n) - \log_3 n - \log_3 \log_3 n| \leq  c \log_3 \log_3 n }  \quad\mathop{\longrightarrow}_{n \rightarrow \infty} \quad 1.$$

\begin{proof} In this proof, $C>0$ will denote a constant that may change from expression to expression (and which may only depend on $c$). By \cite{MP02}, $ \mathcal{T}_{n}$, viewed as a plane tree rooted at vertex $1$, is a modified Bienaymé--Galton--Watson tree $ \widetilde{\mathcal{T}}$, conditioned on having $n$ vertices, where the root has offspring distribution $\lambda(k)=2/3^{k}$ for $k \geq 1$, and all the other vertices have offspring distribution $\mu(k)=4(k+1)/3^{k+2}$ for $k \geq 0$.  Note that ${\mu}([k,\infty))=(3+2k)/3^{1+k}$ with the notation of Theorem \ref{thm:degree2}.  Also, by \cite{MP02}, $ \Pr{|\widetilde{\mathcal{T}}|=n} \sim \sqrt{3/(4\pi)} \cdot n^{-3/2}$ as $n \rightarrow \infty$. Hence, for every $k \geq 1$,
\begin{eqnarray}
 \Pr{ k_{\emptyset}( \widetilde{\mathcal{T}}) \geq k \ \big | \ |\widetilde{\mathcal{T}}|=n}&=&  \frac{1}{\Pr{ |\widetilde{\mathcal{T}}|=n}} \sum_{j \geq k}  \lambda(j) \Prmuj{|F|=n-1} \notag\\
 &=&  \sum_{j \geq k}  \lambda(j) \cdot \frac{j }{n-1} \frac{\Pr{W_{n-1}=-j}}{\Pr{ |\widetilde{\mathcal{T}}|=n}}  \leq  C \sum_{j=k}^{\infty} j \lambda(j) \leq C \frac{k}{3^{k}}. \label{eq:k}
\end{eqnarray}
For the second equality, we have used the well-known fact that $\Prmuj{|F|=n-1}=j/(n-1) \cdot \Pr{W_{n-1}=-j}$, which is for instance a consequence of Proposition \ref{prop:RW} and \eqref{eq:Kemp}, and the first inequality uses the bound $\Pr{W_{n-1}=-j} \leq C/ \sqrt{n}$ which is a consequence of the local limit theorem.
Also,  conditionally on the event $\{ k_{\emptyset}(\widetilde{\mathcal{T}}) = j, |\widetilde{\mathcal{T}}|=n\}$ the forest of trees grafted on the root of $\widetilde{\mathcal{T}}$ is a $\Pmuj$ forest conditioned on having $n-1$ vertices. Therefore, for every nonnegative measurable function $f$, we have
\begin{eqnarray*}
\Es{f(\Delta(\widetilde{\mathcal{T}})), k_{\emptyset}(\widetilde{\mathcal{T}}) \leq k \ \big| \ |\widetilde{\mathcal{T}}|=n } & \leq & \sup_{1 \leq j \leq k} \Prmuj{f(\Delta (F)) \, | \, |F|=n-1}.
\end{eqnarray*}

To prove the first inequality in Theorem \ref{thm:nc}, set $b_{n}=\log_3 n + \log_3 \log_3 n +  c \log_3 \log_3 n$ to simplify notation, and take $k= \sqrt{n} $. Then, on  one hand,  by Theorem \ref{thm:degree2} (which we may apply because $B_{n}/\sqrt{n}$ converges to a finite positive value since $\mu$ has finite variance), 
$$\sup_{1 \leq j \leq   \sqrt{n}} \Prmuj{\Delta (F) \geq b_{n} \, | \, |F|=n-1} \leq C n {\mu}( [a_{n},\infty)) \leq \frac{C}{(\log_{3} n)^{c}},$$
and, on the other hand, $\Pr{ k_{\emptyset}( \widetilde{\mathcal{T}}) \geq \sqrt{n}\ \big | \ |\widetilde{\mathcal{T}}|=n}= o \left({ (\log_{3} n)^{-c}} \right)$ by \eqref{eq:Kemp}. 

Similarly, for the second inequality, set $a_{n}=\log_3 n + \log_3 \log_3 n -  c \log_3 \log_3 n$ and also take $k= \sqrt{n} $. Then, on  one hand, by Theorem \ref{thm:degree2} 
$$\sup_{1 \leq j \leq  \sqrt{n}} \Prmuj{\Delta (F) \leq a_{n} \, | \, |F|=n-1} \leq C_{1} e^{-C_{2} n {\mu}( [a_{n}+1,\infty))} \leq  C_{1} e^{-C_{2} (\log_{3}n)^{c}},$$
and, on the other hand,  $\Pr{ k_{\emptyset}( \widetilde{\mathcal{T}}) \geq \sqrt{n}\ \big | \ |\widetilde{\mathcal{T}}|=n}= o \left( e^{-C_{2} (\log_{3}n)^{c}}\right)$ by \eqref{eq:Kemp}. 
This completes the proof.
\end{proof}

See also \cite{CKdissections,KM15} for a study of geometric properties of random non-crossing trees.

\section{Sub-exponential bounds on the height of BGW trees}
\label{sec:subexp}

Our goal is now to establish Theorem \ref{thm:height}.  Recall that $\t_{n}$ denotes a Bienaymé--Galton--Watson tree with a critical offspring distribution $\mu$ in the domain of attraction of a stable law of index $\alpha \in (1,2]$, conditioned on having $n$ vertices.

Fix $ \delta \in (1, \alpha)$. Since $ \Pr{ H( \t_n) \geq  u \cdot  {n}/{B_{n}}}=0$ for $u > B_{n}$, we shall implicitly always restrict our attention to values of $u$ such that $2 \leq u \leq B_{n}$. In addition, by monotonicity of $ x \mapsto \Prmu{H \geq x}$, we may and will always assume without loss of generality that $u n/B_{n}$ and $un /(2B_{n})$ are both integers.

 In addition to Tables \ref{tab:sec2} and \ref{tab:sec3}, it may be useful to refer to Table \ref{tab:sec4} to keep track of the main notation used and introduced in this section.

\begin{table}[htbp]\caption{Table of the main notation and symbols appearing in Section \ref{sec:subexp}.}
\centering
\begin{tabular}{c c p{12cm} }
\toprule
$H(\tau)$  & & Height of a tree $ \tau$.\\
$M(z)$ &&  Number of vertices branching off $\llbracket \emptyset, z \llbracket$ for $z \in \tau$.\\
$\mathrm{Cut}_{z}(\tau)$ && Tree obtained by keeping only $z$ and its descendance in $\tau$.\\
$Z_{k}=Z_{k}(\tau)$ && Number of vertices of height $k$ in $\tau$.\\
$\mathsf{U}_{k}=\mathsf{U}_{k}(\tau)$ && Vertex chosen uniformly at random at height $k$ in $\tau$ when $Z_{k}(\tau)>0$. \\
$\t_{n}$ && $\GW_{\mu}$ tree conditioned on having $n$ vertices.\\
$\mathcal{T}^{\ast}$ && $\GW_{\mu}$ tree conditioned to survive.\\
$\mathsf{U}_{n}^{\ast}$  &&   Vertex of the spine of $ \mathcal{T}^{\ast}$ at generation $n$.\\
$Z^{\ast}_{k}$ &&  Number of vertices of height $k$ in $ \mathcal{T}^{\ast}$.\\
$M^{\ast}_{k}$  && Number of children branching off the spine of $\mathcal{T}^{\ast}$ up to generation $k-1$.\\
$(B'_{n})_{n \geq 1}$ && An increasing scaling sequence such that $B'_{B_{n}}  \sim B_{B'_{n}}\sim n$ as $n \rightarrow \infty$.\\
\bottomrule
\end{tabular}
\label{tab:sec4}
\end{table}

\subsection{Preliminary observations}
\label{sec:obs}

We first introduce some notation. If $\tau$ is a tree and $z \in \tau$, as in \cite{ADS13}, a key quantity will be $M(z)$, the number of vertices branching off $\llbracket \emptyset, z \llbracket$, that is the number of children of vertices of $\llbracket \emptyset, z \llbracket$ that do not belong to  $\llbracket \emptyset, z \llbracket$ (the dependence in $\tau$ is implicit). Observe that  by Lemma \ref{lem:cod}, for every $z \in \tau$,
$M(z) \leq \max \W^ \textrm {lex}(\tau)+\max \W^ \textrm {rev}(\tau)$.
In addition, by Theorem \ref{thm:coding},
$$\Pr { \max \W^ \textrm {lex}( \t_n) + \max \W^ \textrm {rev}( \t_n) \geq  u^{\delta-1} B_n} \leq  2 \Pr{  \max \W^{ \textrm {lex}}( \t_n) \geq u^{\delta-1} B_{n} /2} \leq C_{1} \exp(-C_{2}u^{\delta}).$$
To establish Theorem \ref{thm:height}, it is therefore enough to show the existence of $C_{1},C_{2}>0$ such that
$$\Pr { H( \t_n) \geq  u \cdot  \frac{n}{B_{n}} \quad  \textrm{and} \quad \max_{z \in \t_{n}} M(z) \leq  u^{\delta-1} B_n } \leq C_1 \exp (- C_2 u^ \delta)$$
for every $n \geq 1$ and $2 \leq u \leq B_{n}$.

\paragraph{Size-biasing.} A key ingredient will be the notion of \emph{size-biasing}. We denote by $ \mathcal{T}^{\ast}$ a random variable having the law of the $\GW_{\mu}$ tree conditioned to survive, which we will also call the size-biased tree. This tree was introduced by Kesten \cite{Kes86}, and may informally be  described as follows. Let $X$ be a random variable with distribution $\mu$, and let $X^{\ast}$ be a random variable with the size-biased distribution $\Pr{X^{\ast}=i}=i\mu(i)$ for $i \geq 1$. In the tree $ \mathcal{T}^{\ast}$, vertices are either normal, or mutant, and the root is a mutant vertex. Normal nodes have outdegree distributed according to independent copies of $X$, while mutant nodes have outdegree distributed according to independent copies of $X^{\ast}$. All children of a normal vertex are normal, while for each mutant node, all of its children are normal, except one, selected uniformly at random, which is mutant. In particular, the tree $ \mathcal{T}^{\ast}$ has a unique infinite simple path starting from the root, called the spine. Vertices on the spine have outdegree distributed according to $X^{\ast}$, while all the other vertices  have outdegree distributed according to $\mu$. For every $n \geq 0$, we let $\mathsf{U}_{n}^{\ast}$ be the vertex of the spine of $ \mathcal{T}^{\ast}$ at generation $n$.

We now introduce some notation. Let $\tau$ be a tree and $z \in \tau$. Recall that $\theta_z \tau=\{w
\in U; \, zw \in \tau\}$. Set $\mathrm{Cut}_{z}(\tau)= \tau \backslash \{zw ; w \in \theta_{z} \tau \backslash \{\emptyset\} \}$, which is a tree such that $z \in \mathrm{Cut}_{z}(\tau)$. If $n \geq 0$ is such that $Z_{n}(\tau)>0$, we let $\mathsf{U}_{n}$ be a vertex chosen uniformly at random among all those at generation $n$ of $ \tau$. The term \emph{size-biasing} comes from the following result, see \cite{LPP95b}: for any nonnegative functions $G_{1},G_{2}$:
\begin{equation}
\label{eq:sizeb} \Esmu{\mathbbm{1}_{Z_{n}>0} F(Z_{n}) G_{1}(\mathrm{Cut}_{\mathsf{U}_{n}}( \tau),\mathsf{U}_{n}) G_{2}( \theta_{\mathsf{U}_{n}} \tau)  }= \Es{ \frac{F(Z_{n}( \Tca ))}{Z_{n}( \Tca) }  G_{1}(\mathrm{Cut}_{\mathsf{U}_{n}^{\ast}}( \Tca),\mathsf{U}_{n}^{\ast})} \cdot \Esmu{G_{2}( T)}.
\end{equation}
This identity is usually written and used with $F$ replaced by $ x \mapsto x F(x)$, but we have written it in this form in view of future use. 

\paragraph{Two technical estimates.} We first establish an estimate describing the asymptotic behavior of $B_{n}$.

\begin{lem}\label{lem:pB}We have $B_{n} \cdot  \Prmu{Z_{n/B_{n}}>0} \rightarrow (\alpha-1)^{-1/(\alpha-1)} $ as $n \rightarrow  \infty$.
\end{lem}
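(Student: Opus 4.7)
The plan is to combine Slack's classical asymptotic for the extinction time of a critical stable Bienaymé--Galton--Watson process with the defining relation \eqref{eq:cvB} for $B_n$.

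First, I would extract from the domain-of-attraction hypothesis the refined expansion
\[
1-f(1-u) \;=\; u \;-\; c\,u^{\alpha}L(1/u)(1+o(1)) \qquad (u\to 0^+),
\]
where $c=(2-\alpha)\Gamma(-\alpha)$ (interpreted by continuity as $1/2$ when $\alpha=2$), with the \emph{same} slowly varying function $L$ as in \eqref{eq:cvB}. This is obtained by comparing $\E[e^{-\lambda W_n/B_n}]\to e^{\lambda^{\alpha}}$ with the identity $\E[e^{-\lambda W_1}]=e^{\lambda}f(e^{-\lambda})$ at $u=\lambda/B_n$, and identifying the constant by matching \eqref{eq:cvB}.

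Next, with $p_n:=\Prmu{Z_n>0}$ satisfying $p_{n+1}=1-f(1-p_n)$ and $p_0=1$, the above expansion yields the discrete ODE-type estimate $p_n-p_{n+1}\sim c\,p_n^{\alpha}L(1/p_n)$. Passing to $q_n:=p_n^{-(\alpha-1)}$ and summing (a standard Karamata-style integration) gives Slack's asymptotic
\[
p_n^{-(\alpha-1)} \;\sim\; (\alpha-1)\,c\,n\,L(1/p_n) \qquad (n\to\infty).
\]
Setting $k_n:=\lfloor n/B_n\rfloor$ and substituting $n\to k_n$, then multiplying through by $B_n^{\alpha-1}$ and inserting $B_n^{\alpha}\sim c\,n\,L(B_n)$ from \eqref{eq:cvB}, the powers of $cn$ cancel and we obtain the key identity
\begin{equation*}
(B_n\,p_{k_n})^{-(\alpha-1)} \;\sim\; (\alpha-1)\,\frac{L(1/p_{k_n})}{L(B_n)}.
\end{equation*}

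The final step is to prove $L(1/p_{k_n})\sim L(B_n)$, which will force $B_n\,p_{k_n}\to(\alpha-1)^{-1/(\alpha-1)}$. I would proceed by a bootstrap. Writing $B_n=n^{1/\alpha}\ell(n)$ and $1/p_n=n^{1/(\alpha-1)}\ell_1(n)$ for slowly varying $\ell,\ell_1$ (the existence of $\ell_1$ being part of Slack's theorem), one checks $1/p_{k_n}$ is of order $n^{1/\alpha}$ times a slowly varying factor; the Potter bounds \eqref{eq:sv2} then yield $x_n:=B_n\,p_{k_n}\in[m,M]$ for positive constants $m,M$ and $n$ large. Writing $1/p_{k_n}=B_n/x_n$, uniform convergence of slowly varying functions on compact subsets of $(0,\infty)$ gives $L(B_n/x_n)/L(B_n)\to 1$ along any convergent subsequence of $x_n$, and the displayed identity forces every subsequential limit to equal $(\alpha-1)^{-1/(\alpha-1)}$.

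The main obstacle is this last step. When $\alpha<2$, $L$ need not be asymptotically constant, and $L(1/p_{k_n})/L(B_n)\to 1$ is \emph{not} automatic, since slowly varying functions need not be stable under scaling by another unbounded slowly varying function. The delicate point is therefore to establish the two-sided a priori bound on $x_n$ via Potter bounds before concluding; in the finite variance case this issue disappears because $L$ is asymptotically constant.
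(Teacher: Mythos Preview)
Your proposal is correct and follows essentially the same route as the paper's own proof. Both arguments combine Slack's asymptotic $p_n^{\alpha-1}L(1/p_n)\sim \mathrm{const}/n$ with \eqref{eq:cvB} to arrive at the key relation $(B_n p_{k_n})^{\alpha-1}\sim(\alpha-1)^{-1}L(B_n)/L(1/p_{k_n})$, and then close by a Potter-bound argument ruling out $B_n p_{k_n}\to 0$ or $\infty$ along subsequences; the only cosmetic difference is that you rederive Slack's estimate from the generating-function expansion whereas the paper quotes it, and you phrase the final step as a compactness/bootstrap rather than as a direct contradiction via \eqref{eq:sv2}.
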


\begin{proof}By \eqref{eq:equiv0} and \cite[Lemma 2]{Sla68},
\begin{equation}
\label{eq:equivp} \Prmu{Z_{n}>0}^{\alpha-1} L \left( {\Prmu{Z_{n}>0}} ^{-1}\right)   \quad\mathop{\sim}_{n \rightarrow \infty} \quad  \frac{\alpha}{\Gamma(3-\alpha) n}.
\end{equation}
To simplify notation, in this proof we set $R_{n}={\Prmu{Z_{n/B_{n}}>0}} ^{-1}$, so that $R_{n} \rightarrow \infty$, and we aim at showing that $B_{n}/R_{n} \rightarrow (\alpha-1)^{-1/(\alpha-1)}$ as $n \rightarrow \infty$. By replacing $n$ with $n/B_{n}$ in \eqref{eq:equivp}  and using \eqref{eq:cvB}, we get that
\begin{equation}
\label{eq:RB}\left(  \frac{B_{n}}{R_{n}} \right) ^{\alpha-1}  \quad \mathop{\sim}_{n \rightarrow \infty} \quad (\alpha-1)^{-1}\cdot \frac{L(B_{n})}{L(R_{n})}.
\end{equation}
Since $R_{n}$ and $B_{n}$ tend to $\infty$ as $n \rightarrow \infty$, this implies that $(B_{n}/R_{n})^{\alpha-1} \rightarrow  (\alpha-1)^{-1}$. Indeed, if $(B_{n}/R_{n})^{\alpha-1} $ converges to a positive limit along a subsequence, then $L(B_{n})/L(R_{n}) \rightarrow 1$ along this subsequence by properties of slowly varying functions. If $B_{n}/R_{n} \rightarrow  \infty$ along a subsequence, then it is a simple matter to see that a contradiction arises from \eqref{eq:RB} by the Potter bounds. One similarly treats the case $B_{n}/R_{n} \rightarrow  0$ along a subsequence, and this completes the proof.
\end{proof}

 Let $(S_{i})_{i \geq 1}$ be a sequence of i.i.d.~random variables having the distribution of the total size of a $\GW_{\mu}$ tree. Recall that $d_{\alpha}$ is the density of a random variable $Y_{\alpha}$ with Laplace exponent given by $\Es{ \exp(- \lambda Y_{\alpha})}= \exp(\lambda ^ \alpha)$ for every $\lambda>0$. It is a standard fact that 
\begin{equation}
\label{eq:Tn} \Prmu{|\tau|=n }  \quad \mathop{\sim}_{n \rightarrow \infty} \quad  \frac{d_{\alpha}(0)}{n B_{n}}.
\end{equation}
This can for instance be seen by combining the equality $ \Prmu{|\tau|=n } =\Pr{W_{n}=-1}/n$ with the local limit theorem. In particular, $ \Prmu{|\tau| \geq n }  \sim  \alpha d_{\alpha}(0) /B_{n} $ as $n \rightarrow \infty$. Hence under $\Pmu$, $|\tau|$ belongs to the domain of attraction of a stable law of index $1/\alpha$.
Therefore, setting
\begin{equation}
\label{eq:Bprime}B'_n =  \inf \left\{ x \geq 0; \,  \Prmu{|\tau| \geq x} \leq
\frac{\alpha d_{\alpha}(0)}{n}\right\},
\end{equation}
the proof of Theorem 2.6.1 in \cite{IL71} (in particular, our $B'_{n}$ plays the role of $B_{n}$ that is defined in \cite[Eq.~(2.6.6)]{IL71})  shows that  $(S_{1}+ \cdots+S_{n})/B'_{n}$ converges in distribution to a stable law of index $1/\alpha$ as $n \rightarrow \infty$ (since $1/\alpha<1$, by \cite[Theorem 3 (i) in Chap.~XVII.5]{Fel71} no centering procedure is required). Hence, if we set $\Psi_{n}(k)= \Pr{S_{1}+ \cdots+S_{n}=k}$, by the local limit theorem  \cite[Theorem 4.2.1]{IL71},
$$ \lim_{n \rightarrow \infty} \sup_{k \in \Z} \left| B'_{n} \Psi_{n}(k)-g \left(  \frac{k}{B'_{n}} \right) \right| =0,$$
where $g$ is the density of a stable random variable with index $1/\alpha$.  By \cite[Eq.~(I.20)]{Zol86}), $g$ is bounded. As a consequence, there exists a 
constant $C>0$ such that
\begin{equation}
\label{eq:psi} \textrm{for every } n \geq 1, \quad  \textrm{ for every } k \geq 1,  \qquad \Psi_{n}(k) \leq  \frac{C}{B'_{n}}.
\end{equation}

We will also later use the following asymptotic estimate involving both $B_{n}$ and $B'_{n}$:
\begin{equation}
\label{eq:BB} B'_{B_{n}}  \quad \mathop{\sim}_{n \rightarrow \infty} \quad n.
\end{equation}
To see this, note that, by \eqref{eq:Bprime}, we have  $ \Prmu{|\tau| \geq B'_{n} }  \sim  \alpha d_{\alpha}(0) / n$ as $n \rightarrow \infty$. Since $ \Prmu{|\tau| \geq  m  }  \sim  \alpha d_{\alpha}(0) /B_{m} $ as $m \rightarrow \infty$, we also get that $ \Prmu{|\tau| \geq B'_{n} }  \sim  \alpha d_{\alpha}(0) / B_{B'_{n}}$ as $n \rightarrow \infty$. This implies that $B_{B'_{n}} \sim n$ as $n \rightarrow \infty$. The estimate \eqref{eq:BB} then immediately follows from \cite[p.~21]{Sen76}.

\subsection{Proof of Theorem \ref{thm:height}}

We start by explaining the main steps for proving Theorem \ref{thm:height}. Recall that we assume that $u n/B_{n}$ and $un /(2B_{n})$ are both integers. First, let $Z^{\ast}_{k}=Z_{k}( \mathcal{T}^{\ast})$ be the size of the $k$-th generation of the size biased tree, let also $M^{\ast}_{k}=M(\mathsf{U}^{\ast}_{k})$ be the number of children branching off the spine of $\mathcal{T}^{\ast}$ up to generation $k-1$. By using a cutting and size-biasing argument, we shall prove the following bound.

\begin{lem}\label{lem:sb} For every $n \geq 1$ and $2 \leq u \leq B_{n}$, we have
\begin{eqnarray}
&&  \Pr { H( \t_n) \geq  u \cdot  \frac{n}{B_{n}} \quad  \textrm{and} \quad \max_{z \in \t_{n}} M(z) \leq u^{\delta-1} B_n } \notag\\
&& \qquad  \qquad  \leq   \frac{1}{ \Prmu{ |\tau|=n }}  \cdot \Es{ \frac{1}{Z^{\ast}_{un/(2B_{n})}}} \cdot \Es{ \frac{1}{B'_{M^{\ast}_{un/(2B_{n})}}}   \mathbbm{1}_{M^{\ast}_{un/(2B_{n})} \leq u^{\delta-1} B_{n}}   }. \label{eq:infame}
\end{eqnarray}
\end{lem}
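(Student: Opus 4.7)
Plan. With $k := un/(2B_{n})$, conditioning on $|\tau|=n$ yields the factor $1/\Prmu{|\tau|=n}$, so it suffices to bound the numerator $\Prmu\{H\geq 2k,\ |\tau|=n,\ \max_{z\in\tau} M(z)\leq u^{\delta-1}B_{n}\}$. The argument combines a union bound at level $k$, the branching property of $\GW_\mu$ trees, the size-biasing identity \eqref{eq:sizeb}, and the local limit estimate \eqref{eq:psi}.

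If $H(\tau)\geq 2k$, some $v\in\tau$ at level $k$ satisfies $H(\theta_{v}\tau)\geq k$ (take the level-$k$ ancestor of any level-$2k$ vertex), and $\max_{z\in\tau} M(z)\geq M(v)$ on $\{v\in\tau\}$. Using the union bound $\mathbbm{1}_{H\geq 2k}\leq \sum_{v:|v|=k}\mathbbm{1}_{v\in\tau,\,H(\theta_{v}\tau)\geq k}$, the branching property (given $v\in\tau$ with $|v|=k$, $\mathrm{Cut}_{v}(\tau)$ and $\theta_{v}\tau$ are independent, with $\theta_{v}\tau$ an independent $\GW_\mu$ tree $T$), and splitting the size constraint via $a=|\mathrm{Cut}_{v}(\tau)|$, I arrive at
$$\sum_{a}\Esmu{Z_{k}\mathbbm{1}_{Z_{k}>0}\mathbbm{1}_{|\mathrm{Cut}_{\mathsf{U}_{k}}(\tau)|=a,\,M(\mathsf{U}_{k})\leq u^{\delta-1}B_{n}}}\cdot\Prmu\{|T|=n+1-a,\,H(T)\geq k\}.$$
Applying \eqref{eq:sizeb} with $F(Z_{k})=Z_{k}$ (so that $F(Z_{k})/Z_{k}\equiv 1$) and noting that $\mathrm{Cut}_{\mathsf{U}_{k}^{\ast}}(\Tca)$ is finite iff $\mathsf{U}_{k}^{\ast}$ coincides with the spine vertex $v_{k}$ (for otherwise the infinite spine remains)---an event of conditional probability $1/Z^{\ast}_{k}$ on which $M(\mathsf{U}_{k}^{\ast})=M^{\ast}_{k}$---rewrites the first factor as $\Es{(1/Z^{\ast}_{k})\mathbbm{1}_{|\mathrm{Cut}_{v_{k}}(\Tca)|=a,\,M^{\ast}_{k}\leq u^{\delta-1}B_{n}}}$.

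Bounding $1/Z^{\ast}_{k}\leq 1$ and conditioning on the spine (which determines $M^{\ast}_{k}$ and yields the representation $|\mathrm{Cut}_{v_{k}}(\Tca)|-(k+1)=S_{1}+\cdots+S_{M^{\ast}_{k}}$ with $S_{i}$ iid copies of $|T|$), the local limit bound \eqref{eq:psi} gives $\Pr\{|\mathrm{Cut}_{v_{k}}(\Tca)|=a\mid M^{\ast}_{k}\}=\Psi_{M^{\ast}_{k}}(a-k-1)\leq C/B'_{M^{\ast}_{k}}$. Combined with $\sum_{a}\Prmu\{|T|=n+1-a,\,H(T)\geq k\}=\Prmu\{H(T)\geq k\}=p_{k}$, the double sum collapses to $C\,p_{k}\,\Es{\mathbbm{1}_{M^{\ast}_{k}\leq u^{\delta-1}B_{n}}/B'_{M^{\ast}_{k}}}$. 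Finally, invoking the identity $p_{k}=\Prmu{Z_{k}>0}=\Es{1/Z^{\ast}_{k}}$ (a direct consequence of \eqref{eq:sizeb} applied with $F\equiv G_{1}\equiv G_{2}\equiv 1$) gives the two-factor product form claimed, the constant $C$ being absorbed.

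The main obstacle is the legitimate use of \eqref{eq:psi}: the factor $1/B'_{M^{\ast}_{k}}$ materialises only once the spine-decomposition of $\mathrm{Cut}_{v_{k}}(\Tca)$ into a deterministic spine plus $M^{\ast}_{k}$ iid $\GW_\mu$ subtrees has been exploited so that the local limit theorem for their total size applies; this structural independence---enabled by size-biasing---is what allows the two expectations $\Es{1/Z^{\ast}_{k}}$ and $\Es{\mathbbm{1}_{M^{\ast}_{k}\leq u^{\delta-1}B_{n}}/B'_{M^{\ast}_{k}}}$ to appear as separate factors in the bound.
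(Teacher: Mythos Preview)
Your overall strategy is correct and genuinely different from the paper's. The paper applies size-biasing directly at level $un/B_{n}$, obtaining an expectation in which $1/Z^{\ast}_{un/B_{n}}$, the indicator $\{M^{\ast}_{un/B_{n}}\le u^{\delta-1}B_{n}\}$ and the size constraint are entangled; it then \emph{splits the spine in two halves} to manufacture independence, using the upper half $\mathcal{T}^{\ast}_{\uparrow}$ to supply $1/Z^{\ast}_{k}$ and the lower half $\mathcal{T}^{\ast}_{\downarrow}$ to supply both $M^{\ast}_{k}$ and the forest of $M^{\ast}_{k}$ trees on which \eqref{eq:psi} is applied. You instead decompose the \emph{unconditioned} tree at level $k$ via the branching property: the subtree above contributes the factor $p_{k}=\Prmu{H\ge k}=\Es{1/Z^{\ast}_{k}}$, while size-biasing the cut tree at level $k$ yields $M^{\ast}_{k}$ and the forest needed for \eqref{eq:psi}. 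Independence is then free from the branching property, and no spine-splitting is needed --- this is a cleaner route to the same bound.

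There is, however, one confused step. In the paper's notation (Table~\ref{tab:sec4} and the text after \eqref{eq:sizeb}), $\mathsf{U}^{\ast}_{k}$ is \emph{defined} to be the spine vertex of $\mathcal{T}^{\ast}$ at generation $k$; it is \emph{not} a uniform vertex. Hence your sentence ``$\mathrm{Cut}_{\mathsf{U}^{\ast}_{k}}(\mathcal{T}^{\ast})$ is finite iff $\mathsf{U}^{\ast}_{k}$ coincides with the spine vertex $v_{k}$ \dots\ an event of conditional probability $1/Z^{\ast}_{k}$'' is based on a misreading: $\mathrm{Cut}_{\mathsf{U}^{\ast}_{k}}(\mathcal{T}^{\ast})$ is \emph{always} finite, and applying \eqref{eq:sizeb} with $F(x)=x$ gives directly
\[
\Esmu{Z_{k}\mathbbm{1}_{Z_{k}>0}\,\mathbbm{1}_{|\mathrm{Cut}_{\mathsf{U}_{k}}(\tau)|=a,\;M(\mathsf{U}_{k})\le u^{\delta-1}B_{n}}}
=\Es{\mathbbm{1}_{|\mathrm{Cut}_{\mathsf{U}^{\ast}_{k}}(\mathcal{T}^{\ast})|=a,\;M^{\ast}_{k}\le u^{\delta-1}B_{n}}},
\]
with \emph{no} factor $1/Z^{\ast}_{k}$ inside the expectation. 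Your subsequent ``bounding $1/Z^{\ast}_{k}\le 1$'' therefore corrects an error you should not have made; simply delete the offending sentence and proceed from the display above to your Steps~5--7, which are all valid. The final product form, with one factor $\Es{1/Z^{\ast}_{k}}=p_{k}$ coming from the upper tree and one factor $\Es{\mathbbm{1}_{M^{\ast}_{k}\le u^{\delta-1}B_{n}}/B'_{M^{\ast}_{k}}}$ from the cut tree, is then obtained exactly as you describe. (Both your argument and the paper's pick up the constant $C$ from \eqref{eq:psi}; the lemma's statement suppresses it, harmlessly.)
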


The most technical part of the proof consists in bounding the second expectation appearing in \eqref{eq:infame}:

\begin{lem}\label{lem:exp} There exist $C_{1},C_{2}>0$ such that
$$n \cdot \Es{ \frac{1}{B'_{M^{\ast}_{un/(2B_{n})}}}   \mathbbm{1}_{M^{\ast}_{un/(2B_{n})} \leq u^{\delta-1} B_{n}}   }  \leq C_1 \exp (- C_2 u^ \delta). $$
for every $n \geq 1$ and $2 \leq u \leq B_{n}$.
\end{lem}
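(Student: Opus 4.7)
The strategy combines a Chernoff bound on the left tail of $M^{\ast}_k$ with a sub-polynomial lower bound on $B'_m$ coming from the asymptotic inversion $B'_{B_n}\sim n$ given by \eqref{eq:BB} and the Potter bounds. Observe first that $M^{\ast}_k=\sum_{i=0}^{k-1}(X^{\ast}_i-1)$, where the $X^{\ast}_i$ are i.i.d.\ with the size-biased law $\Pr{X^{\ast}=j}=j\mu(j)$. A summation by parts on the tail $\mu([n,\infty))\sim\frac{2-\alpha}{\alpha}L(n)n^{-\alpha}$ yields $\Pr{X^{\ast}-1\geq n}\sim c L(n)n^{1-\alpha}$, placing $X^{\ast}-1$ in the domain of attraction of a one-sided stable law of index $\alpha-1$. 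A Tauberian-type estimate then furnishes $c_1>0$ and $\lambda_0>0$ such that, for $0<\lambda\leq\lambda_0$,
\[
\phi(\lambda):=\Es{e^{-\lambda(X^{\ast}-1)}}\leq\exp\bigl(-c_1\lambda^{\alpha-1}L(1/\lambda)\bigr).
\]
The case $\alpha=2$ is handled analogously and is simpler since in the finite-variance situation $X^{\ast}-1$ has finite mean.

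Combining this with the standard Chernoff inequality gives, for $0<\lambda\leq\lambda_0$ and $m\geq 0$,
\[
\Pr{M^{\ast}_k\leq m}\leq e^{\lambda m}\phi(\lambda)^k\leq\exp\bigl(\lambda m-c_1 k\lambda^{\alpha-1}L(1/\lambda)\bigr).
\]
I optimize by choosing $\lambda=\lambda_m\sim(k/m)^{1/(2-\alpha)}$ (up to slowly varying corrections). After processing the slowly varying factors via \eqref{eq:cvB} and the Potter bounds, the $L$-factors coming from $\phi$ collapse against the $L$ implicit in $n/B_n^\alpha$, producing a clean bound of the form
\[
\Pr{M^{\ast}_k\leq m}\leq\exp\bigl(-c_2\,u^{1/(2-\alpha)-\epsilon'}(B_n/m)^{(\alpha-1)/(2-\alpha)}\bigr)
\]
for any $\epsilon'>0$ fixed. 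At the endpoint $m=t=u^{\delta-1}B_n$, a short algebraic check shows the exponent equals $c_2 u^{(\alpha-\delta(\alpha-1))/(2-\alpha)-\epsilon''}$, which is strictly greater than $u^\delta$ because $(\alpha-\delta(\alpha-1))/(2-\alpha)-\delta=(\alpha-\delta)/(2-\alpha)>0$; hence for some $\delta'\in(\delta,\alpha)$, $\Pr{M^{\ast}_k\leq t}\leq\exp(-c_3 u^{\delta'})$.

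For the factor $1/B'_{M^{\ast}_k}$, the inversion $B_{B'_n}\sim n$ combined with $B_n=n^{1/\alpha}\ell(n)$ (with $\ell$ slowly varying) yields $B'_m=m^\alpha\tilde\ell(m)$, with $\tilde\ell$ slowly varying and $\tilde\ell(B_n)\sim 1/L(B_n)$. The Potter bounds then give, for every $\epsilon>0$, a constant $C_\epsilon$ such that $n/B'_m\leq C_\epsilon(B_n/m)^{\alpha-\epsilon}$ (using precisely the cancellation between $\tilde\ell$ and $L$ noted above). Decomposing dyadically with $x_j=2^j/B_n$ and $x_J\sim u^{\delta-1}$, one gets
\[
n\cdot\Es{\frac{\mathbbm{1}_{M^{\ast}_k\leq t}}{B'_{M^{\ast}_k}}}\leq C_\epsilon\sum_{j=0}^{J}x_j^{-(\alpha-\epsilon)}\exp\bigl(-c_3\,u^{1/(2-\alpha)-\epsilon''}x_j^{-(\alpha-1)/(2-\alpha)}\bigr).
\]
Since the function $\phi(x):=x^{-(\alpha-\epsilon)}\exp(-c_3 u^{1/(2-\alpha)-\epsilon''}x^{-(\alpha-1)/(2-\alpha)})$ attains its maximum on $(0,\infty)$ at $x_{\ast}\sim u^{1/(\alpha-1)}$, and $x_{\ast}>u^{\delta-1}$ precisely because $\delta<\alpha<\alpha/(\alpha-1)$, $\phi$ is increasing on the relevant range $[1/B_n,u^{\delta-1}]$; a standard comparison of the dyadic sum with an integral yields that it is dominated by the endpoint term $\phi(u^{\delta-1})=u^{-(\delta-1)(\alpha-\epsilon)}\exp(-c_3 u^{\delta'})$. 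Since $\delta'>\delta$, the polynomial prefactor $u^{-(\delta-1)(\alpha-\epsilon)}$ is absorbed into the sub-exponential, giving the desired $C_1\exp(-C_2 u^\delta)$.

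The main obstacle is the careful bookkeeping with slowly varying functions, in particular the cancellation between $L(B_n)$ appearing in $n/B_n^\alpha$ and the $\tilde\ell(m)$ appearing in $B'_m$, which is essential for obtaining bounds that are uniform both in $n\geq 1$ and over the full range $2\leq u\leq B_n$. One also has to verify that small $u$ (say $u$ bounded) does not cause trouble: there the exponential factor $\exp(-c_3 u^{\delta'})$ is merely a constant, and the boundedness of $n\cdot\Es{\mathbbm{1}_{M^{\ast}_k\leq t}/B'_{M^{\ast}_k}}$ as $n\to\infty$ reflects the finiteness of $\Es{Z^{-\alpha}\mathbbm{1}_{Z\leq c}}$ for $Z$ a positive $(\alpha-1)$-stable random variable, which holds because the density of $Z$ decays sub-exponentially at $0$, absorbing any polynomial singularity.
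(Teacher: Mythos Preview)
Your strategy---a Chernoff bound on $\Pr{M^\ast_k\leq m}$ combined with $n/B'_m\lesssim(B_n/m)^{\alpha+\epsilon}$ and a dyadic decomposition---is close in spirit to the paper's, but there are two genuine gaps.

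First, the case $\alpha=2$ is not handled. Your optimized choice $\lambda\sim(k/m)^{1/(2-\alpha)}$ and all the exponents $1/(2-\alpha)$, $(\alpha-1)/(2-\alpha)$, $\delta'=(\alpha-\delta(\alpha-1))/(2-\alpha)$ blow up as $\alpha\to 2$, and your one-line dismissal (``the finite-variance situation, $X^\ast-1$ has finite mean'') is incorrect: the paper explicitly covers $\alpha=2$ with $\sigma^2=\infty$, where $\Es{X^\ast}=\Es{X^2}=\infty$. The paper's Lemma~\ref{lem:inflame} avoids optimization entirely, taking instead $\lambda=ut^\gamma/B_n$ for a fixed exponent $\gamma<\min\bigl(1/(\alpha_+(2-\delta)),1/(\alpha_1(2-\alpha))\bigr)$ and checking directly (using the restriction $c_0un/B_n\leq u^{\delta-1}B_n$) that $\lambda\to 0$ uniformly; the resulting bound works for every $\alpha\in(1,2]$.

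Second, the bottom of your dyadic range is not controlled. For $m$ bounded, the Potter bound you invoke (which needs both arguments of $B'$ to tend to infinity) is unavailable, and your optimized $\lambda$ exceeds $\lambda_0$, so neither ingredient applies; and your final paragraph, which appeals to the limiting $(\alpha-1)$-stable law of $M^\ast_k/B_n$ to treat bounded $u$, is circular, since the uniformity in $n$ is exactly what must be proved. The paper removes this regime first: a crude Bernoulli minorant $M^\ast_k\geq\#\{i\leq k:X^\ast_i>1\}$ gives $\Pr{M^\ast_k\leq c_0k}\leq e^{-Ck}$, and a short concavity argument over $u\in[2,B_n]$ shows $n\,e^{-Cun/B_n}\leq C_1e^{-u^\delta}$. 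On the remaining range $c_0un/B_n\leq M^\ast_k\leq u^{\delta-1}B_n$ the paper uses the identity
\[
\Es{\tfrac{1}{X}\,\mathbbm{1}_{x\leq X\leq y}}=\tfrac{1}{y}\int_1^{y/x}\Pr{x\leq X\leq y/t}\,dt+\tfrac{1}{y}\Pr{x\leq X\leq y}
\]
with $X=B'_{M^\ast_k}$, which pulls the factor $n/B'_{u^{\delta-1}B_n}\leq Cu^\eta$ out \emph{before} integrating; the remaining integral $\int_1^\infty e^{-C_2u^\delta t^\gamma}\,dt\leq Cu^{-\delta}e^{-C_2u^\delta}$ is then bounded independently of $n$, giving the uniform estimate without any endpoint-domination or limit argument.
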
 

We now explain how Theorem \ref{thm:height} follows from these two results.

\begin{proof}[Proof of Theorem \ref{thm:height}] Recall that it is enough to  show the existence of $C_{1},C_{2}>0$ such that
\begin{equation}
\label{eq:amontrer2} \Pr { H( \t_n) \geq  u \cdot  \frac{n}{B_{n}} \quad  \textrm{and} \quad \max_{z \in \t_{n}} M(z) \leq  u^{\delta-1} B_n } \leq C_1 \exp (- C_2 u^ \delta).
\end{equation}
for every $n \geq 1$ and $2 \leq u \leq B_{n}$. By \eqref{eq:sizeb},  $\Es{1/Z^{\ast}_{un/(2B_{n})}}=  \Prmu{Z_{un/(2B_{n})}>0} \leq  \Prmu{Z_{n/B_{n}}>0}$ since $u \geq 2$.  Hence, by Lemmas \ref{lem:sb} and \ref{lem:exp},
$$\Pr { H( \t_n) \geq  u \cdot  \frac{n}{B_{n}} \quad  \textrm{and} \quad \max_{z \in \t_{n}} M(z) \leq u^{\delta-1} B_n } \leq \frac{ \Prmu{Z_{n/B_{n}}>0}}{n \cdot \Prmu{ |\tau|=n }} \cdot C_1 \exp (- C_2 u^ \delta).$$
But by \eqref{eq:Tn} and Lemma \ref{lem:pB}, the quantity $ \Prmu{Z_{n/B_{n}}>0}/(n \cdot \Prmu{ |\tau|=n }) $ converges to a positive limit as $ n \rightarrow \infty$. The conclusion follows.
\end{proof}

We now prove Lemma \ref{lem:sb}.

\begin{proof}[Proof of Lemma \ref{lem:sb}]By using the size-biasing relation \eqref{eq:sizeb}, write 
\begin{eqnarray*}
&&  \Pr { H( \t_n) \geq  u \cdot  \frac{n}{B_{n}} \quad  \textrm{and} \quad \max_{z \in \t_{n}} M(z) \leq u^{\delta-1} B_n } \\
&& \qquad  \qquad\qquad  \leq \quad   \Pr { H( \t_n) \geq  u \cdot  \frac{n}{B_{n}} \quad  \textrm{and} \quad M(\mathsf{U}_{ un/B_{n}}) \leq u^{\delta-1} B_n } \\
&& \qquad  \qquad \qquad   = \quad \frac{1}{ \Prmu{ |\tau|=n }} \Es{ \frac{1}{Z^{\ast}_{un/B_{n}}}    \mathbbm{1}_{M^{\ast}_{un/B_{n}} \leq u^{\delta-1} B_{n}}  \mathbbm{1}_{ |\mathrm{Cut}_{\mathsf{U}^{\ast}_{un/B_{n}}}( \mathcal{T}^{\ast}) |+| \mathcal{T}|=n+1}},
\end{eqnarray*}
where $ \mathcal{T}$ is an independent $ \GW_{\mu}$ tree. Observe that the random variables $Z^{\ast}_{un/B_{n}}$ and $M^{\ast}_{un/B_{n}}$ are of course not independent, which is a major issue. Also, forgetting the other terms, note that one should  find a good number of independent trees in order to bound $\Pr{ |\mathrm{Cut}_{\mathsf{U}^{\ast}_{un/B_{n}}}( \mathcal{T}^{\ast})|+| \mathcal{T}|=n+1}$ by using \eqref{eq:psi}. 

The main idea is to introduce independence: roughly speaking, we cut the spine up to generation $un/B_{n}$ in half and, denoting by respectively $ \mathcal{S}_{\downarrow}$ and $ \mathcal{S}^{\uparrow}$ its lower and upper part, we bound from below $Z^{\ast}_{un/B_{n}}$ by the number of vertices at generation $un/B_{n}$ in $ \mathcal{T}^{\ast}$ that have an ancestor belonging to $\mathcal{S}^{\uparrow}$,  we bound from below $M^{\ast}_{un/B_{n}}$ by the number of children branching off $ \mathcal{S}_{\downarrow}$, and the collection of independent trees we use for applying the local limit theorem are those branching off $ \mathcal{S}_{\downarrow}$.

Specifically, define  $ \mathcal{T}^{\ast}_{\downarrow}=\mathrm{Cut}_{\mathsf{U}_{un/(2B_n)}^{\ast}}( \Tca)$ and  $ \mathcal{T}^{\ast}_{\uparrow}= \theta_{\mathsf{U}_{un/(2B_n)}^{\ast}}\mathrm{Cut}_{\mathsf{U}_{un/B_n}^{\ast}}( \Tca) $ (to simplify notation, we keep the dependence in $u$ and $n$ implicit).  Note that $ \mathcal{T}^{\ast}_{\downarrow}$ and $ \mathcal{T}^{\ast}_{\uparrow}$ are independent and have same distribution, and also that $M^{\ast}_{un/(2B_{n})}$ is a measurable function of $ \mathcal{T}^{\ast}_{\downarrow}$. Finally, note that 
$$ \left |\mathrm{Cut}_{\mathsf{U}^{\ast}_{un/B_{n}}}( \mathcal{T}^{\ast}) \right|=|\mathcal{T}^{\ast}_{\downarrow}| +| \mathcal{T}^{\ast}_{\uparrow}|-1$$
and that $|\mathcal{T}^{\ast}_{\downarrow}|$ is equal to $1+un/(2B_{n})$ plus the total size of a forest of $M^{\ast}_{un/(2B_{n})}$ independent $\GW_{\mu}$ trees. Observing that $Z^{\ast}_{un/B_{n}} \geq Z_{un/(2B_{n})}(\mathcal{T}^{\ast}_{\uparrow})$ and $M^{\ast}_{un/B_{n}} \geq M^{\ast}_{un/(2B_{n})}$, we therefore have
\begin{eqnarray*}
&& \Es{ \frac{1}{Z^{\ast}_{un/B_{n}}}    \mathbbm{1}_{M^{\ast}_{un/B_{n}} \leq u^{\delta-1} B_{n}}  \mathbbm{1}_{ |\mathrm{Cut}_{\mathsf{U}^{\ast}_{un/B_{n}}}( \mathcal{T}^{\ast})|+| \mathcal{T}|=n+1}}\\
&& \qquad  \qquad     \leq \Es{ \frac{1}{Z_{un/(2B_{n})}(\mathcal{T}^{\ast}_{\uparrow})}    \mathbbm{1}_{M^{\ast}_{un/(2B_{n})} \leq u^{\delta-1} B_{n}}   \Psi_{M^{\ast}_{un/(2B_{n})}
}(n+1-|\mathcal{T}^{\ast}_{\uparrow}|- | \mathcal{T}|-{un/(2B_{n})}) }, 
\end{eqnarray*}
where we recall that  $\Psi_{n}(k)$ be the probability that a forest of $n$ independent $ \GW_{\mu}$ trees has total size $k$. The desired result then follows by using \eqref{eq:psi} and noting that   $Z_{un/(2B_{n})}(\mathcal{T}^{\ast}_{\uparrow})$ is independent of $M^{\ast}_{un/(2B_{n})}$ and has the same distribution as $Z^{\ast}_{un/(2B_{n})}$.\end{proof}

At this point, we make several comments concerning Lemma \ref{lem:sb}. First, in its proof, we chose to use \eqref{eq:psi} with the forest of trees branching off the lower half of the spine. It would perhaps been more natural to choose the forest of trees above generation $un/(2B_{n})$ in the tree $\mathcal{T}^{\ast}_{\uparrow}$, and in this way get that for every $n \geq 1$ and $2 \leq u \leq B_{n}$,
\begin{eqnarray*}
&&  \Pr { H( \t_n) \geq  u \cdot  \frac{n}{B_{n}} \quad  \textrm{and} \quad \max_{z \in \t_{n}} M(z) \leq u^{\delta-1} B_n } \notag\\
&& \qquad  \qquad  \leq   \frac{1}{ \Prmu{ |\tau|=n }}  \cdot \Es{ \frac{1}{Z^{\ast}_{un/(2B_{n})} B'_{Z^{\ast}_{un/(2B_{n})}}} } \cdot \Pr{M^{\ast}_{un/(2B_{n})} \leq u^{\delta-1} B_{n}}.
\end{eqnarray*}
Unfortunately, this does not allow to conclude, since
$$\Es{ \frac{n B_{n}}{Z^{\ast}_{un/(2B_{n})} B'_{Z^{\ast}_{un/(2B_{n})}}} }  \quad \mathop{\longrightarrow}_{n \rightarrow \infty} \quad \infty.$$
The reason is that $Z^{\ast}_{n/B_{n}}/B_{n}$ converges in distribution to a positive random variable whose density has polynomial decay near $0$ (see the discussion after Proposition \ref{prop:Z}), while $M^{\ast}_{n/B_{n}}/B_{n}$ converges in distribution to a positive random variable whose density has exponential decay near $0$.

Also, it would have been slightly simpler to bound from below $M^{\ast}_{un/(2B_{n})}+1$ by the maximal outdegree of a vertex of the lower half of the spine. One may indeed follow this path when $ \alpha \in (1,2)$. However, in the case $\alpha=2$, these two quantities are not of the same order.

It thus remains to establish Lemma \ref{lem:exp}. The main technical estimate is the following.

\begin{lem}\label{lem:inflame}Fix $c_{0},C_{0}>0$ and $\alpha_{1}>\alpha$. There exist $\gamma>0$ and constants $C_{1},C_{2}>0$ such that
$$\Pr{{M^\ast_{un/(2B_{n})}} \leq C_{0} \frac{{u^{\delta-1}B_{n}}}{t^{1/\alpha_{1}}} } \leq C_{1}e^{- C_{2}u^{\delta} t^{\gamma}}$$
for every $n \geq 1$, $2 \leq u \leq B_{n}$ such that $ c_{0} u n/B_{n} \leq u^{\delta-1} B_{n}$ and $1 \leq t \leq {B'_{u^{\delta-1}B_{n}}}/{B'_{c_{0}un/B_{n}}}$.
\end{lem}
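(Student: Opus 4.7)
Let $Y := X^\ast - 1 \ge 0$, where $X^\ast$ is a random variable with the size-biased law of $\mu$. Then $M^\ast_{K}$ (with $K := un/(2B_n)$) is a sum of $K$ i.i.d.~copies of $Y$. A direct computation from $\mu([j,\infty))\sim\tfrac{2-\alpha}{\alpha}L(j)j^{-\alpha}$ yields $\Pr{X^\ast \geq j} \sim \tfrac{2-\alpha}{\alpha-1}L(j)j^{1-\alpha}$, so $Y$ lies in the domain of attraction of a positive $(\alpha-1)$-stable law. The starting point would be a Tauberian argument combined with $\log(1-z)\leq -z$, giving constants $c,h_0>0$ with
\begin{equation*}
-\log\Es{e^{-hY}} \;\geq\; c\,h^{\alpha-1}L(1/h), \qquad h\in(0,h_0],
\end{equation*}
while $-\log\Es{e^{-hY}}\geq c'>0$ for all $h\geq h_0$ (since $\mu(0)+\mu(1)<1$ makes $Y$ non-trivial).

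The strategy is then the Cram\'er/Markov inequality $\Pr{M^\ast_K\leq a}\leq e^{ha}\Es{e^{-hY}}^K$ applied at the formal minimizer $h^\ast\sim(K/a)^{1/(2-\alpha)}$ (up to slowly varying factors). When $h^\ast\leq h_0$, the Tauberian estimate gives $\Pr{M^\ast_K\leq a}\leq \exp\!\bigl(-c_1 K^{1/(2-\alpha)}/a^{(\alpha-1)/(2-\alpha)}\bigr)$. Substituting $K=un/(2B_n)$ and $a=C_0u^{\delta-1}B_n/t^{1/\alpha_1}$ and using \eqref{eq:cvB} to absorb $n/B_n^\alpha$ into a slowly varying function, one finds that the exponent is, up to slowly varying corrections, a constant times $u^{\gamma_u}t^{\gamma_t}$, where
\begin{equation*}
\gamma_u \;:=\; \frac{\alpha+\delta-\alpha\delta}{2-\alpha}, \qquad \gamma_t \;:=\; \frac{\alpha-1}{\alpha_1(2-\alpha)}.
\end{equation*}
An elementary manipulation shows $\gamma_u>\delta$ iff $\delta<\alpha$, which is our hypothesis; hence $u^{\gamma_u}\geq u^\delta$ for $u\geq 1$, and Potter's inequality \eqref{eq:sv2} allows all slowly varying factors to be absorbed by taking $\gamma$ strictly below $\gamma_t$.

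In the complementary case $h^\ast>h_0$ (which corresponds to $a$ being much smaller than $K$, via the equivalence $K/a>h_0^{2-\alpha}/(c(\alpha-1))$), the same Cram\'er bound with $h=h_0$ gives $\Pr{M^\ast_K\leq a}\leq\exp(h_0 a-c'K)\leq\exp(-c'K/2)$, since taking $h_0$ large forces $h_0 a\leq c'K/2$. It then suffices to verify $c'K/2\geq C_2 u^\delta t^\gamma$, i.e.\ $n/B_n\gtrsim u^{\delta-1}t^\gamma$, for $t$ in the stated range. Using $t\leq B'_{u^{\delta-1}B_n}/B'_{c_0 un/B_n}$ together with $B'_m\sim m^\alpha$ (from \eqref{eq:BB} and the definition of $B'_m$), this inequality reduces, after plugging in the largest admissible $t$ and $u\leq B_n$, to $(\alpha-\delta)(1/\alpha+\gamma)\geq 0$, which holds for every $\gamma>0$ since $\delta<\alpha$.

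The main obstacle I anticipate is the uniform matching of the two regimes of $h^\ast$, together with the precise bookkeeping of slowly varying corrections in both the Tauberian Laplace estimate and in the scaling sequences $B_n,B'_n$. The hypotheses $c_0 un/B_n\leq u^{\delta-1}B_n$ and $t\leq B'_{u^{\delta-1}B_n}/B'_{c_0 un/B_n}$ are precisely what ensure that the Cram\'er bound and the backup bound $\exp(-c'K/2)$ combine to give the same sub-exponential form with a common $\gamma>0$, while $\alpha_1>\alpha$ guarantees that after absorbing the slowly varying losses via \eqref{eq:sv2}, the exponent $\gamma$ remains strictly positive.
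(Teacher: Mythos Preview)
Your approach is close in spirit to the paper's: both use the Cram\'er--Chernoff bound $\Pr{M^\ast_K\leq a}\leq e^{\lambda a}\,\Es{e^{-\lambda Y}}^K$ together with the Tauberian estimate \eqref{eq:equiv3} for $1-\Es{e^{-\lambda Y}}$. The essential difference is that the paper does not attempt to optimize over $\lambda$; it makes the explicit suboptimal choice $\lambda = ut^\gamma/B_n$ (with a fixed $\gamma<\min\bigl(1/(\alpha_+(2-\delta)),\,1/(\alpha_1(2-\alpha))\bigr)$), verifies directly that this $\lambda$ tends to $0$ uniformly over the admissible range of $(u,t)$ using the constraints $c_0 un/B_n\leq u^{\delta-1}B_n$ and $t\leq B'_{u^{\delta-1}B_n}/B'_{c_0un/B_n}$, and then applies \eqref{eq:equiv3} once. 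Your optimized choice $h^\ast\sim(K/a)^{1/(2-\alpha)}$ would buy sharper constants in the exponent, but it forces a case split according to the size of $h^\ast$.

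There are two genuine gaps. First, your argument collapses at $\alpha=2$: the formal minimizer $h^\ast$ and the exponents $\gamma_u=(\alpha+\delta-\alpha\delta)/(2-\alpha)$ and $\gamma_t=(\alpha-1)/(\alpha_1(2-\alpha))$ are all undefined there. The lemma is needed for the full range $\alpha\in(1,2]$ (it feeds into Theorem~\ref{thm:height}), and the Laplace behaviour of $Y=X^\ast-1$ is qualitatively different at $\alpha=2$; in particular the tail asymptotic $\mu([j,\infty))\sim \tfrac{2-\alpha}{\alpha}L(j)j^{-\alpha}$ you start from is not available in the finite-variance case (compare \eqref{eq:equivfini} and \eqref{eq:equivalents2}). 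The paper's explicit choice of $\lambda$ avoids this singularity entirely.

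Second, your treatment of the complementary regime $h^\ast>h_0$ is not correct as stated. The threshold $h_0$ is determined by where the Tauberian asymptotic \eqref{eq:equiv3} holds and cannot simply be ``taken large''; in fact, since $c'(h_0)=-\log\Es{e^{-h_0 Y}}$ is bounded, enlarging $h_0$ shrinks $c'(h_0)/h_0$ and makes the desired inequality $h_0 a\leq c'K/2$ \emph{harder}, not easier, to satisfy. What is needed is to match the condition $K/a>h_0^{2-\alpha}$ (coming from $h^\ast>h_0$) with $a/K\leq c'(h_0)/(2h_0)$, which amounts to $h_0^{\alpha-1}\leq c'(h_0)/2$; this is a non-trivial constraint that you have not verified. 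The paper sidesteps this entirely: by choosing $\lambda$ explicitly and proving $\lambda\to 0$ uniformly on the stated domain, only the small-$\lambda$ regime ever arises and no case split is required.
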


We postpone its proof and explain how Lemma \ref{lem:exp} follows from Lemma \ref{lem:inflame}.

\begin{proof}[Proof of Lemma  \ref{lem:exp}] We start with a preliminary result. We claim that there exists $c_{0},C_{1},C_{2}>0$ such that
$$ n \Pr{M^{\ast}_{un/(2B_{n})} \leq  c_{0} un/B_{n}} \leq  C_{1}e^{-C_{2}u^{\delta}}$$
for every $n \geq 1$ and $2 \leq  u \leq  B_{n}$. To show this, we will use the following simple fact, which follows from Markov's exponential inequality. Let $S_{n}$ be a sum of $n$ i.i.d~Bernoulli random variables of parameter $p \in (0,1)$. Then, for every $p_{0} \in (0,p/2)$, there exists a constant $C>0$ such that $\Pr{S_{n} \leq 2 p_{0} n} \leq \exp(-C n)$
for every $n \geq 1$. 

Now write $M^{\ast}_{un/(2B_{n})}= \sum_{i=1}^{un/(2B_{n})} (X_{i}^{\ast}-1)$, where $(X_{i}^{\ast})_{i \geq 1}$ are i.i.d.~random variables having the same distribution as the size-biased random variable $X^{\ast}$, so that $S^{\ast}_{n}= \# \{1 \leq i \leq n ; {X}^{*}_{i}>1\}$, which is a sum of $n$ i.i.d.~Bernoulli random variables of parameter $1-\mu(1)$, and note that $M^{\ast}_{un/(2B_{n})} \geq S^{\ast}_{un/(2B_{n})}$. Therefore, choosing $ c_{0} \in (0,(1-\mu(1))/2)$, by the previous discussion we get
$$ \Pr{M^{\ast}_{un/(2B_{n})} \leq  c_{0} un/B_{n}} \leq \Pr{S^{\ast}_{un/(2B_{n})} \leq  c_{0} un/B_{n}} \leq \exp(-C u n/B_{n})$$
Then observe that 
$$n  \exp(-C u n/B_{n}) \leq C_{1}\exp(- u^{\delta}),$$
for a certain $C_{1}>0$, for every $n \geq 1$ and $2 \leq  u \leq  B_{n}$.  Indeed, set $f_{n}(u)= C un/B_{n}-u^{\delta}- \ln(n)$. Since $\delta \in (1,\alpha)$, $f_{n}$ is concave so that $ \inf_{[2,B_{n}]} f_{n} \geq \min(f_{n}(2),f_{n}(B_{n}))$. An application of the Potter bounds show that $f_{n}(2)$ and $f_{n}(B_{n})$ both tend to infinity as $n \rightarrow  \infty$.  This completes the proof of the claim.

As a consequence, Lemma \ref{lem:exp} will follow if we manage to show  the existence of $C_{1},C_{2}>0$ such that
$$\Es{ \frac{n}{B'_{M^{\ast}_{un/(2B_{n})}}}   \mathbbm{1}_{ c_{0} u n/B_{n} \leq  M^{\ast}_{un/(2B_{n})} \leq u^{\delta-1} B_{n}}   }  \leq C_1 \exp (- C_2 u^ \delta). $$
for every $n \geq 1$ and $2 \leq u \leq B_{n}$ such that $ c_{0}u n/B_{n} \leq u^{\delta-1} B_{n}$. To this end, using the formula
$$ \Es{ \frac{1}{X} \mathbbm{1}_{x \leq X \leq y}}= \frac{1}{y} \int_{1}^{y/x} dt \ \Pr{x \leq X \leq y/t}+ \frac{1}{y} \cdot \Pr{x \leq X \leq y}$$
valid for every $0 \leq x \leq y$ and every nonnegative real valued random variable $X$,
 write
\begin{eqnarray*}
&&  \Es{ \frac{n}{B'_{M^{\ast}_{un/(2B_{n})}}}   \mathbbm{1}_{ c_{0} u n/B_{n} \leq  M^{\ast}_{un/(2B_{n})} \leq u^{\delta-1} B_{n}}   } \\
&&   \quad   \leq  \frac{n}{B'_{u^{\delta-1}B_{n}}} \cdot \int_{1}^{
\frac{B'_{u^{\delta-1}B_{n}}}{B'_{c_{0}un/B_{n}}}} dt \ \Pr{B'_{M^\ast_{un/(2B_{n})}} \leq  \frac{B'_{u^{\delta-1}B_{n}}}{t} }+  \frac{n}{B'_{u^{\delta-1}B_{n}}} \Pr{M^{\ast}_{un/(2B_{n})} \leq u^{\delta-1} B_{n}  }  \\
\end{eqnarray*}
If $ \alpha_{1}>\alpha$ is fixed, by the Potter bounds there exists a constant $C>0$ such that  $ t^{-1} \cdot {B'_{u^{\delta-1}B_{n}}} \leq B'_{ C {u^{\delta-1}B_{n}} \cdot{t^{-1/\alpha_{1}}}}$ for every $t \geq 1$, $n \geq 1$ and $2 \leq u \leq B_{n}$ such that $ c_{0}u n/B_{n} \leq u^{\delta-1} B_{n}$. Therefore, using the fact that $B'$ is increasing, we get that 
$$\Pr{B'_{M^\ast_{un/(2B_{n})}} \leq  t^{-1} \cdot {B'_{u^{\delta-1}B_{n}}} } \leq \Pr{{M^\ast_{un/(2B_{n})}} \leq C \frac{{u^{\delta-1}B_{n}}}{t^{1/\alpha_{1}}} }.$$
In addition, by  \eqref{eq:BB} and the Potter bounds, there exist $C,\eta>0$ such that ${n} \cdot ({B'_{u^{\delta-1}B_{n}}})^{-1} \leq C u^{\eta}$ for every $n \geq 1$ and $u \geq 2$. Therefore, by Lemma \ref{lem:inflame},
$$\Es{ \frac{n}{B'_{M^{\ast}_{un/(2B_{n})}}}   \mathbbm{1}_{ c_{0} u n/B_{n} \leq  M^{\ast}_{un/(2B_{n})} \leq u^{\delta-1} B_{n}}  } \leq   C u^{\eta}  \left( \int_{1}^{\infty} dt \, e^{- C_{2}u^{\delta} t^{\gamma}}+e^{-C_{2}u^{\delta}} \right) .$$
Finally note that, for every $A>0$,
$$ \int_{1}^{\infty} dt \, e^{-A t^{\gamma}} \leq \int_{1}^{\infty} dt \, \frac{A \gamma t^{\gamma-1}}{\gamma A} e^{-A t^{\gamma}}= \frac{1}{\gamma A} \left[- e^{-A t^{\gamma}} \right]_{1}^{\infty}= \frac{e^{-A}}{\gamma A}.$$
Therefore
$$\Es{ \frac{n}{B'_{M^{\ast}_{un/(2B_{n})}}}   \mathbbm{1}_{ c_{0} u n/B_{n} \leq  M^{\ast}_{un/(2B_{n})} \leq u^{\delta-1} B_{n}}  }  \leq C u^{\eta-\delta} e^{-C_{2} u^{\delta}},$$
and the proof is complete.
\end{proof}

It remains to establish Lemma \ref{lem:inflame}.

\begin{proof}[Proof of Lemma \ref{lem:inflame}] In this proof, $C$ denotes a positive constant that may change from line to line (and that does not depend on $u$ and $n$). We mention that the condition $ c_{0} u n/B_{n} \leq u^{\delta-1} B_{n}$ restricts $u$ to take smaller values than possible and will play a crucial role.

 First, fix $ \alpha_{+} \in (\alpha, \alpha_{1})$ so that  by the Potter bounds, if $a_{n} \leq b_{n}$ are positive sequences tending to infinity, $B'_{b_{n}}/B'_{a_{n}} \leq  C (b_{n}/a_{n})^{\alpha_{+}}$ for every $n \geq 1$. Then let $\gamma>0$ be such that
$$0< \gamma < \min \left( \frac{1}{\alpha_{+}(2-\delta)}, \frac{1}{\alpha_{1}(2-\alpha)} \right) .$$
Since we may write $M^{\ast}_{un/(2B_{n})}= \sum_{i=1}^{un/(2B_{n})} (X_{i}^{\ast}-1)$, where $(X_{i}^{\ast})_{i \geq 1}$ are i.i.d.~random variables having the same distribution as the size-biased random variable $X^{\ast}$, by Markov's exponential inequality, for $\lambda>0$,
$$\Pr{{M^\ast_{un/(2B_{n})}} \leq C_{0} \frac{{u^{\delta-1}B_{n}}}{t^{1/\alpha_{1}}} }  \leq \exp \left( \lambda C_{0} \frac{{u^{\delta-1}B_{n}}}{t^{1/\alpha_{1}}}  +  \frac{un}{2 B_{n}} \ln \Es{e^{-\lambda (X^{*}-1) }}\right).$$
We take $ \lambda= u t^{ \gamma}/B_{n}$ (the dependence in $n$ and $u$ of $ \lambda$ is implicit).  

We  check that $ \lambda \rightarrow  0$. First, since $c_{0} un/B_{n} \leq u^{\delta-1} B_{n}$ and $t \leq {B'_{u^{\delta-1}B_{n}}} \cdot ( {B'_{c_{0}un/B_{n}}})^{-1}$, we have
\begin{equation}
\label{eq:bornes}2 \leq u \leq C \frac{B_{n}^{2/(2-\delta)}}{n^{1/(2-\delta)}}, \qquad  1 \leq t \leq \frac{B_{n}^{2\alpha_{+}}}{u^{\alpha_{+}(2-\delta)} n^{\alpha_{+}} }.
\end{equation}
By combining the previous two estimates, we get that
$$ \frac{u t^{\gamma}}{B_{n}} \leq u^{1-\alpha_{+}(2-\delta) \gamma} \cdot \frac{B_{n}^{2\alpha_{+}\gamma-1}}{n^{\alpha_{+}\gamma}} \leq  \left( C \frac{B_{n}^{2/(2-\delta)}}{n^{1/(2-\delta)}} \right)^{1-\alpha_{+}(2-\delta) \gamma} \cdot \frac{B_{n}^{2\alpha_{+}\gamma-1}}{n^{\alpha_{+}\gamma}}= C  \left( \frac{B_{n}^{\delta}}{n} \right) ^{ \frac{1}{2-\delta}},$$
where we have used the fact that $1-\alpha_{+}(2-\delta) \gamma>0$ for the second inequality. Since $\delta<\alpha$, $B_{n}^{\delta}/n \rightarrow 0$ as $ n \rightarrow  \infty$ by the Potter bounds. This shows that $ \lambda \rightarrow 0$. Note that the convergence $\lambda \rightarrow 0$ does not hold without the restriction $ c_{0} u n/B_{n} \leq u^{\delta-1} B_{n}$ (take e.g.~$u=B_{n}$ and $t=1$).

Now let $ \epsilon>0$ be such that $ \delta+ \epsilon< \alpha$ and $\gamma(\alpha-1)-\epsilon\gamma>\gamma-1/\alpha_{1}$. By the Potter bounds, $L(B_{n}/(ut^{\gamma})) \geq C L(B_{n}) (ut^{\gamma})^{-\epsilon}$. Thus, using \eqref{eq:equiv3} and \eqref{eq:cvB},
\begin{eqnarray*}
 \frac{un}{2 B_{n}} \ln \Es{e^{-\lambda (X^{*}-1)}} \leq -C u^{\alpha} t^{\gamma(\alpha-1)} \frac{n}{B_{n}^{\alpha}} \cdot L \left(  \frac{B_{n}}{u t^{\gamma}} \right)  &=&  -C u^{\alpha} t^{\gamma(\alpha-1)} \frac{n L(B_{n})}{B_{n}^{\alpha}} \cdot \frac{L \left(  \frac{B_{n}}{u t^{\gamma}} \right)}{L(B_{n})}\\
 & \leq &  -C u^{\alpha-\epsilon} t^{\gamma(\alpha-1)-\epsilon\gamma}.
\end{eqnarray*}
Hence
$$\Pr{{M^\ast_{un/(2B_{n})}} \leq C \frac{{u^{\delta-1}B_{n}}}{t^{1/\alpha_{1}}} } \leq \exp \left( C u^{\delta} t^{\gamma-1/\alpha_{1}} -C u^{\alpha-\epsilon} t^{\gamma(\alpha-1)-\epsilon\gamma}\right).$$
Since $ \alpha-\epsilon>\delta$ and $\gamma(\alpha-1)-\epsilon \gamma>\gamma-1/\alpha_{1}$, the proof is complete.
\end{proof}

\subsection {Bounds on generation sizes}

Our goal is now to establish Theorem \ref{thm:gen}  by using Proposition \ref{prop:Z}, whose proof is postponed to the end of this section. We will use the following simple consequence of Proposition \ref{prop:Z}: For every $ \beta \in (0,\alpha)$, there exists a constant $C >0$ such that
\begin{equation}
\label{eq:Z2}\textrm {for every } n \geq 1, x \geq 0, \qquad \Es { \frac{1}{p_{n}Z^{\ast}_{n}}  \mathbbm{1}_{p_{n}Z^{\ast}_{n} \leq x}} \leq C \cdot x ^ {\beta-1}.
\end{equation}

\begin{proof}[Proof of Theorem \ref{thm:gen}] Fix $ \gamma \in (0, \alpha-1)$  and $\eta>0$. The same size-biasing argument that led us to Lemma \ref{lem:sb}
shows that $$ \Pr{0<Z_{u \frac{n}{B_{n}}}(\t_n)< v B_{n}} \leq    \frac{1}{ \Prmu{ |\tau|=n }} \Es{ \frac{\mathbbm{1}_{Z^{\ast}_{un/(2B_{n})} \leq v B_{n}}}{Z^{\ast}_{un/(2B_{n})}}} \cdot \Es{ \frac{1}{B'_{M^{\ast}_{un/(2B_{n})}}}
}.$$
Lemma  \ref{lem:pB} combined with \eqref{eq:Z2} shows that the first expectation in the previous product is bounded above by $ Cv^{\gamma}/B_{n}$ (for every $v \geq 0$, $u \geq \eta$ and $n \geq 1$). Also, using  \eqref{eq:BB}, observe that 
$$ \Es{ \frac{1}{B'_{M^{\ast}_{un/(2B_{n})}}}   \mathbbm{1}_{M^{\ast}_{un/(2B_{n})} \geq  u^{\delta-1} B_{n}}} \leq \frac{1}{B'_{ \eta^{\delta-1} B_{n}}} \leq \frac{C}{n}$$
for every $v \geq 0$, $u \geq \eta$ and $n \geq 1$. Hence, by  Lemma \ref{lem:exp}, $ \Es{ {1}/{B'_{M^{\ast}_{un/(2B_{n})}}}
} \leq C/n$  for every $v \geq 0$, $u \geq \eta$ and $n \geq 1$ (actually Lemma \ref{lem:exp} is stated for $u \geq 2$, but the arguments carry through). The conclusion the follows since $ n B_{n}\Prmu{ |\tau|=n }$ converges to a positive limit as $n \rightarrow  \infty$.
\end{proof}

By adapting the arguments of this proof in order to control the estimates for small values of $u$, it is possible to establish an upper bound for $\Pr{0<Z_{u \frac{n}{B_{n}}}(\t_n)< v B_{n}} $ valid for every $u,v \geq 0$. For brevity, we shall not enter such considerations.

\begin{proof}[Proof of Corollary \ref{cor:gen}]
For the first assertion, note that $$ \Pr{Z_{u \frac{n}{B_{n}}}(\t_n)> v B_{n}} \leq \Pr{H(\t_{n}) \geq u n/B_{n}},  \qquad  \Pr{Z_{u \frac{n}{B_{n}}}(\t_n)> v B_{n}} \leq \Pr{W(\t_{n}) \geq v B_{n}}.$$
Hence $$  \Pr{Z_{u \frac{n}{B_{n}}}(\t_n)> v B_{n}} \leq \sqrt{\Pr{H(\t_{n}) \geq u n/B_{n}} \cdot  \Pr{W(\t_{n}) \geq v  B_{n}}}.$$
The desired result then follows from Theorems \ref{thm:width} and \ref{thm:height}.

The second assertion is established similarly, by combining Theorem \ref{thm:gen} with the observation that
$$ \Pr{0<Z_{u \frac{n}{B_{n}}}(\t_n)< v B_{n}} \leq\Pr{H(\t_{n}) \geq u n/B_{n}}.$$
This completes the proof.
\end{proof}

\begin{rem}\label{rem:CK}There is an analog of Proposition \ref{prop:Z} for $Z_{n}$ instead of $Z^{\ast}_{n}$. In the proof of Proposition 2.6 in \cite{CK08}, Croydon \& Kumagai show that for every  $ \beta \in (0,\alpha-1)$, there exists a constant $C >0$ such that $\Esmu{ p_{n} Z_{n} \leq x \ \big| \ Z_{n}>0} \leq x^{\beta}$ for every $n \geq 1, x \geq 0$. In this case,  the exponent $\alpha-1$ is  optimal. Indeed, by \cite[Theorem 1]{Sla68}, 
\begin{equation}
\label{eq:cvZ0}\Esmu{e^{-\lambda p_{n}Z_{n}} \ \big| \ Z_{n}>0}  \quad \mathop{\longrightarrow}_{n \rightarrow \infty} \quad  1- \frac{\lambda}{ \left( 1+ \lambda^{\alpha-1} \right)^{ \frac{1}{\alpha-1}}}, \qquad \lambda \geq 0.
\end{equation}
As before, if $Z$ is a random variable having this Laplace transform,  for every $\epsilon>0$, there exists $C>0$ such that $ \Pr{Z \leq x} \geq  C x^{\alpha-1+\epsilon}$ for every $0 \leq x \leq 1$. In particular, combined with Lemma \ref{lem:pB}, this shows that the exponent $\alpha-1$ is optimal in Theorem \ref{thm:gen} when $u$ takes there values in a compact subset of $(0,\infty)$.
\end{rem}

We finally establish Proposition \ref{prop:Z}.

\begin {proof}[Proof of Proposition \ref{prop:Z}] Note that $p_{n}= \Prmu{Z_{n}>0}$.
It is clear that we may assume that $x \geq p_{n}$. In turn, it is sufficient to check the existence of $ \lambda_{0}>0$, $N=N(\lambda_{0})$ and a constant $C=C(\lambda_{0})>0$ such that
\begin{equation}
\label{eq:lapl} \Es{e^{-\lambda p_{n} Z_{n}^{\ast}}} \quad \leq \quad  \frac{C}{\lambda^{\beta}}
\end{equation}
holds for every $ \lambda_{0} \leq  \lambda \leq 1/p_{n}$ and $n \geq N$.
Indeed, if \eqref{eq:lapl} holds, then we have, for every $x \in [p_{n},1/\lambda_{0}]$ and $n \geq N$,
$$ \Pr {p_{n}Z^{\ast}_{n} \leq x} = \Pr {e^{-p_{n} Z^{\ast}_{n}/x} \geq 1} \leq \Es {e^{-p_{n} Z^{\ast}_{n}/x}} \leq C x^{\beta}.$$
From now on, we assume that $ 0 \leq  \lambda \leq 1/p_{n}$.

By definition of $Z^{\ast}_{n}$,  if $f_{m}(s)= \Esmu{s^{Z_{m}}}$, we have $\Es{s^{Z^{\ast}_{n}}}=sf'_{n}(s)=s \prod_{i=0}^{n-1}f_{1}'(f_{i}(s))$. In particular, 
\begin{equation}
\label{eq:sb} \Es {e^{-\lambda p_{n} Z^{\ast}_{n}}}= e^{-\lambda p_{n}} \prod_{i=0}^{n-1} \widehat{F} \left( \Esmu {e^{-\lambda p_{n} Z_{i}}} \right)  \leq  \exp \left( \int_{2/n}^{1} du \ n \ln \left( \widehat{F} \left( \Esmu {e^{-\lambda p_{n} Z_{\fl{un}}}} \right) \right)  \right),
\end{equation}
where we set $ \widehat{F}(s)= \sum_{i=1}^{\infty} i \mu(i) s^{i-1}$.

Fix $ \epsilon \in (0,1)$ and choose $ \lambda_{0}>0$ such that $1- {\lambda_{0}}{(1+ \lambda_{0}^{\alpha-1})^{-1/(\alpha-1)}}< \epsilon$.  By \eqref{eq:cvZ0} and since $\lambda \mapsto \Esmu{e^{-\lambda p_{n}Z_{n}} \ \big| \ Z_{n}>0}$ is decreasing, there exists $N=N(\lambda_{0})$ such that
\begin{equation}
\label{eq:lapl2}\textrm {for every } n \geq N,  \quad \lambda \geq \lambda_{0}, \qquad  \Esmu{e^{-\lambda p_{n}Z_{n}} \ \big| \ Z_{n}>0} \leq \epsilon.
\end{equation}
From now on, we assume that $ \lambda_{0} \leq \lambda \leq 1/p_{n} \leq C n^{ \frac{1}{\alpha-1}+\epsilon}$, where the last inequality follows from  \eqref{eq:equivp} and the Potter bounds. To simplify notation, we set $ \gamma= 1/\left(\frac{1}{\alpha-1}+2\epsilon \right)$. In particular, notice that $ n/\lambda^{\gamma} \rightarrow \infty$ as $ n \rightarrow \infty$. Indeed, since $ \lambda \leq C n^{ \frac{1}{\alpha-1}+\e}$, we have
\begin{equation}
\label{eq:bor2} \frac{n}{ \lambda  ^{ \gamma}} \geq C^{-\gamma}  \cdot {n} \cdot {n^{ - \gamma \left( \frac{1}{\alpha-1}+\epsilon \right) } }   \quad\mathop{\longrightarrow}_{n \rightarrow \infty} \quad  \infty
\end{equation}
because $\gamma \left( {1}/(\alpha-1)+\epsilon \right)<1$.

Again by  \eqref{eq:equivp} and the Potter bounds, there exist two constants a constant $C_{1}=C_{1}(\epsilon), C_{2}=C_{2}(\epsilon)>0$ such that $$ \lambda \frac{p_{n}}{p_{\fl {un}}} \geq C_{1}  \lambda   \left(  \frac{\fl {un}}{n} \right) ^{ \frac{1}{\alpha-1}+2\epsilon } \geq  C_{2} { \lambda}{u^{\frac{1}{\alpha-1}+2\epsilon}}.$$
for every $2/n \leq u \leq 1$ and $n \geq 2$. Set $C_{3}=(\lambda_{0}/C_{2})^\gamma$, so that in particular by \eqref{eq:lapl2}, for every $n$ sufficiently large,
$$u \geq   \frac{C_{3}}{\lambda^{\gamma}} \quad \Longrightarrow \quad  \lambda \frac{p_{n}}{p_{\fl {un}}} \geq \lambda_{0}  \quad \Longrightarrow \quad  \Esmu { \left. e^{-  \lambda\frac{p_{n}}{p_{\fl {un}}} \cdot p_{\fl {un}} Z_{\fl {un}}}  \right | Z_{\fl {un}}>0} \leq \epsilon. 
$$
Thus,  by observing that $\Esmu {e^{-c Z_{i}}}=1- \Prmu{Z_{i}>0}+\Prmu{Z_{i} > 0} \cdot \Esmu {e^{-  c Z_{i}} | Z_{i}>0}$ for every $c>0$ and $i \geq 0$,  for every $n$ sufficiently large and for every $ u \in [C_{3} \lambda^{-\gamma},1]$, we get that 
\begin{equation}
\label{eq:bor3}\Esmu {e^{-\lambda p_{n} Z_{\fl {un}}}}= 1-p_{\fl {un}}  \left(  1-\Esmu {\left. e^{-  \lambda\frac{p_{n}}{p_{\fl {un}}} \cdot p_{\fl {un}} Z_{\fl {un}}}  \right | Z_{\fl {un}}>0} \right) \leq 1-p_{\fl {un}} (1-\epsilon).
\end{equation}
For every $n$ sufficiently large, we have $C_{3} \lambda^{-\gamma} \geq 2/n$ by \eqref{eq:bor2}, so that \eqref{eq:sb} and  \eqref{eq:bor3} yield
\begin{equation}
\label{eq:bor4} \Es {e^{-\lambda p_{n} Z^{\ast}_{n}}} \leq  \exp \left( \int_{C_{3} \lambda^{-\gamma}}^{1} du \ n \ln \left( \widehat{F} \left(  1-p_{\fl {un}} (1-\epsilon)) \right) \right)  \right).
\end{equation}

Now, if $X^{\ast}$ is a random variable with distribution given by $\Pr{X^{\ast}=i}=i\mu(i)$ for $i \geq 0$, we have $ \widehat{F}(s)= \Es{s^{X^{\ast}-1}}$. Hence, by \eqref{eq:equiv3},
$$1-\widehat{F}(1-s)  \quad\mathop{\sim}_{s \downarrow 0} \quad \frac{\Gamma(3-\alpha)}{\alpha-1} \cdot s^{\alpha-1} L( 1/s),$$
so that
$$\ln \left( \widehat{F} \left( 1- s \right) \right)   \quad\mathop{\sim}_{s \downarrow 0} \quad -  \frac{\Gamma(3-\alpha)}{\alpha-1} \cdot s^{\alpha-1} L( 1/s).
$$
In particular,  we may choose $ \eta>0$ sufficiently small in such a way that
$$s \in (0, \eta) \quad \Longrightarrow \quad \ln \left( \widehat{F} \left( 1- s \right) \right) \leq  -(1-\epsilon)  \frac{\Gamma(3-\alpha)}{\alpha-1} \cdot s^{\alpha-1} L( 1/s).$$
 For every $n$ sufficiently large and for every $ u \in [C_{3} \lambda^{-\gamma},1]$, we have $ p_{\fl{un}} \leq  \eta$ by \eqref{eq:bor2}, so \eqref{eq:bor4} implies that 
$$\Es {e^{-\lambda p_{n} Z^{\ast}_{n}}} \leq   \exp \left(  -(1-\epsilon)  \frac{\Gamma(3-\alpha)}{\alpha-1} \cdot  \int_{C_{3} \lambda^{-\gamma}}^{1} du \ n   \left( p_{\fl {un}} (1-\epsilon) \right) ^{\alpha-1} L  \left(  \left(  p_{\fl {un}} (1-\epsilon) \right) ^{-1} \right)  \right).$$
By \eqref{eq:equivp}, we have   $$ L  \left(  \left(  p_{\fl {un}} (1-\epsilon) \right) ^{-1} \right)   \quad\mathop{\sim}_{n \rightarrow \infty} \quad  \frac{\alpha}{\Gamma(3-\alpha) \fl {un} p_{ \fl {un}}^{\alpha-1}},$$
and by \eqref{eq:bor2} this estimate is uniform in $C_{3}\lambda^{-\gamma} \leq u \leq 1$. Hence, for every $n$ sufficiently large, 
\begin{eqnarray*}
\Es {e^{-\lambda p_{n} Z^{\ast}_{n}}} & \leq &   \exp \left(  - (1-\epsilon)^{\alpha+1} \frac{\alpha}{\alpha-1} \cdot  \int_{C_{3} \lambda^{-\gamma}}^{1} du  \frac{n}{\fl{un}}  \right)  \\
& \leq & \exp \left(  - (1- \e)^{\alpha+1}  \frac{\alpha}{\alpha-1} \cdot  \int_{C_{3} \lambda^{-\gamma}}^{1} du \  \frac{1}{u}  \right)  \\
&=&  \exp \left(   (1- \e)^{ \alpha+1}  \frac{\alpha}{\alpha-1} \cdot  \ln \left( C_{3} \lambda^{-\gamma} \right) \right)  = {C_{4}} \cdot {\lambda^{- (1- \e)^{ \alpha+1}   \cdot  \frac{\alpha}{\alpha-1} \cdot \gamma}}\end{eqnarray*}
with $C_{4}=  \exp \left(   (1- \e)^{ \alpha+1}  \frac{\alpha}{\alpha-1} \cdot  \ln( C_{3}) \right) $.
Finally, observe that
$$(1- \e)^{ \alpha+1}   \cdot  \frac{\alpha}{\alpha-1} \cdot \gamma=  \alpha \cdot \frac{(1- \e)^{ \alpha+1}  }{1+ 2 \epsilon(\alpha-1)}.$$
 This completes the proof of \eqref{eq:lapl}, since by choosing $ \e \in (0,1)$ small enough, the quantity $((1- \e)^{ \alpha+1}  )/(1+ 2 \epsilon(\alpha-1))$ will be as close to $1$ as desired.
 \end {proof}

\section{Tail estimates for the stable excursion}

Recall that the tail behavior of the supremum of the associated height process $\He$ has been obtained by Duquesne \& Wang \cite{DW15}, see \eqref{eq:Hinfini} and \eqref{eq:Hzero}. Recall also that we evaluated the asymptotic behavior of the supremum of the stable bridge (Corollary \ref{cor:Xbr}). Here we gather several open questions concerning tail estimates for statistics of the stable bridge $X^{\mathrm{br}}$ and stable excursion $\X$  which have appeared throughout the text:

What is the asymptotic behavior of $\Pr{\sup \Xbr \leq 1/u}$,  $\Pr{\sup \X \geq u}$, $\Pr{ \sup \X \leq 1/u}$, $\Pr { \Delta(X^{*}) \geq u}$,  and $\Pr { \Delta(X^{*}) \leq  1/u}$ as $u \rightarrow  \infty$? What are the values $\Es{(\sup \Xbr)^{p}}$, $ \Es{ (\sup \X)^{p}}$ and $\Es{ \Delta(X^{*})^{p}}$ for $p \geq 1$? In the case $\alpha=2$, $\X$ and $\He$ are multiples of the normalized Brownian excursion, and such estimates are well known (see e.g.~Eq.~(5) and Section 1.1 in \cite{ADS13}).

There also seems to be a duality between the behavior of $\sup \X$ at $\infty$ (resp.~$0$) and the behavior of $\sup \He$ at $0$ (resp.~$\infty$): indeed, the exponent governing the exponential decay should by $\alpha/(\alpha-1)$ for $ \Pr{\sup \X \geq u}$ and $ \Pr{\sup \He \leq 1/u}$, and should by $\alpha$ for  $ \Pr{\sup \X \leq  1/u}$ and $ \Pr{\sup \He \geq  u}$. Can this be seen in a simple way directly in the continuous world?

\section {Appendix}

In this appendix, we prove several useful results concerning the asymptotic behavior of Laplace transforms of critical offspring distributions belonging to domains of attractions of stable laws and of their associated size-biased distributions. As before, assume that $\mu$ is a critical offspring distribution belonging to the domain of attraction of a stable law of index $\alpha \in (1,2]$. Let $\sigma^{2} \in (0,\infty]$ be the variance of $\mu$ and let $X$ is a random variable with law $\mu$. Recall from the Introduction that $L$ is a slowly varying function such that $\textrm{Var}(X \cdot \mathbb{1}_{X \leq n})= n^{2-\alpha} L(n)$. Note that $L(n) \rightarrow \sigma^{2}$ when $\sigma^{2}<\infty$, and that $L(n) \rightarrow  \infty$ when $\sigma^{2}=\infty$ and $\alpha=2$. Hence, if $X$ is a random variable with law $\mu$, we have
$$\Es{X^{2} \mathbbm{1}_{X \leq n}}  \quad \mathop{\sim}_{n \rightarrow \infty} \quad n^{2-\alpha} L(n) +1.$$
since $\Es{X \mathbbm{1}_{X \leq n}} \rightarrow 1$ as $n \rightarrow \infty$.
The term ``$+1$'' is not negligible only when $\sigma^{2}<\infty$ (in which case $\alpha=2$).

\paragraph{Offspring distributions.} Set $G(s)= \sum_{i \geq 0} \mu(i) s^{i}$ for $0 \leq s \leq 1$. 
Then by e.g.~\cite[Lemma 4.7]{BS15}
\begin{equation}
\label{eq:equiv0}G(s)-s  \quad\mathop{\sim}_{s \uparrow 1} \quad   \frac{\Gamma(3-\alpha)}{\alpha(\alpha-1)} \cdot (1-s)^{\alpha} L( (1-s)^{-1}).
\end{equation}
We stress that this holds in the both cases $\sigma^{2}<\infty$ and $\sigma^{2}=\infty$.

Also, if $W$ is a random variable with distribution $ \Pr {W=i}= \mu(i+1)$ for $i \geq -1$, since $ \Es {e^{-\lambda W}}= e^{\lambda}G(e^{-\lambda})$ for $\lambda>0$, we have
\begin{equation}
\label{eq:equivalents}\Es {e^{-\lambda W}} -1  \quad\mathop{\sim}_{\lambda \downarrow 0} \quad   \frac{\Gamma(3-\alpha)}{\alpha(\alpha-1)} \cdot  \lambda^{\alpha} L(1/\lambda).
\end{equation}
Again, this holds  in the both cases $\sigma^{2}<\infty$ and $\sigma^{2}=\infty$.

\paragraph{Size-biased offspring distributions.} Let $ {\mu}^{\ast}$ be the so-called size-biased probability distribution on $ \Z_{+}$ defined by $ {\mu}^{\ast}(i)=i\mu(i)$ for $i \geq 0$.  Note that $  {\mu}^{\ast}$ is indeed a probability distribution since $\mu$ is critical. Let $X^{\ast}$ be a random variable having law $\mu^{\ast}$. When $\mu$ has finite variance, we claim that 
\begin{equation}
\label{eq:equivfini} 1-\Es{s^{X^{\ast}}}  \quad\mathop{\sim}_{s \uparrow 1} \quad (1-s)(\sigma^{2}+1),
\end{equation}
and when $\mu$ has infinite variance, we claim that
\begin{equation}
\label{eq:equivalents2}1-\Es{s^{X^{\ast}}}  \,\, \mathop{\sim}_{s \uparrow 1} \,\,  \frac{\Gamma(3-\alpha)}{\alpha-1} \cdot (1-s)^{\alpha-1} L((1-s)^{-1}), \quad  1- \Es { e^{-\lambda {X}^{\ast}}} \,\,\mathop{\sim}_{\lambda \downarrow 0} \,\,  \frac{\Gamma(3-\alpha)}{\alpha-1} \cdot \lambda^{\alpha-1} L(1/\lambda).
\end{equation}

When $\mu$ has finite variance the claim \eqref{eq:equivfini} simply follows from the fact that $\Es{X^{\ast}}=\sigma^{2}+1$.

Now assume that $\mu$ has infinite variance. Then there exists a slowly varying function $L_{1}$ such that $\Pr{X \geq n}= \mu([n,\infty))=L_{1}(n)/n^{\alpha}$ (see \cite[Corollary XVII.5.2 and (5.16)]{Fel71}) when $\alpha<2$, we have $L_{1}(n)= \frac{2-\alpha}{\alpha} L(n)$, and $L_{1}(n)/L(n) \rightarrow 0$ as $n \rightarrow  \infty$  when $\alpha=2$. As a consequence, $\mu^{\ast}$ belongs to the domain of attraction of a stable law of index $\alpha-1$, because
\begin{equation}
\label{eq:star} {\mu}^{\ast}([n,\infty))  \quad\mathop{\sim}_{n \rightarrow \infty} \quad   \frac{\alpha}{\alpha-1} \cdot \frac{L_{1}(n)}{n^{\alpha-1}}
\end{equation}since we can write $   {\mu}^{\ast}([n,\infty)) = (n-1)  {\mu}([n,\infty)+ \sum_{j=n}^{\infty} \mu([j,\infty))$.

If $\alpha<2$,  \eqref{eq:star} and \cite[Corollary XVII.5.2 and (5.16)]{Fel71} give that 
$$  \Es{(X^\ast)^{2} \mathbbm{1}_{X^{\ast} \leq n}}  \quad \mathop{\sim}_{n \rightarrow \infty} \quad n^{3-\alpha} \cdot  \frac{2-\alpha}{3-\alpha} L(n),$$
and \eqref{eq:equivalents2}  result follows e.g. by \cite[Lemma 4.6]{BS15}.

Now assume that $\alpha=2$ and set $q^{\ast}_{i}= \Pr{X^{\ast} > i}$ for $i \geq 0$. Then
$$\sum_{i=0}^{n}q^{\ast}_{i}= \Es{X^{2} \mathbbm{1}_{X \leq n}}+ (n+1) \mu^{\ast}([n+1,\infty))  \quad \mathop{\sim}_{n \rightarrow \infty} \quad  L(n).$$
Indeed,  we know that $L_{1}(n)/L(n) \rightarrow 0$ as $n \rightarrow  \infty$.  Hence, by \cite[Thm. XIII.5.5]{Fel71}, we have $ \sum_{i=0}^{\infty}q^{\ast}_{i} s^{i} \sim L((1-s)^{-1})$ as $s \uparrow 1$.  Then
$$1-\Es{s^{X^{\ast}}}=(1-s) \sum_{i=0}^{\infty}q^{\ast}_{i} s^{i}  \quad \mathop{\sim}_{s \uparrow 1} \quad  (1-s)L((1-s)^{-1}).$$
Our claim \eqref{eq:equivalents2} then follows by taking $s=e^{-\lambda}$. 

Finally, from \eqref{eq:equivfini}  and \eqref{eq:equivalents2}  it is a simple matter to see that the estimates
\begin{equation}
\label{eq:equiv3}1-\Es{s^{X^{\ast}-1}} \, \mathop{\sim}_{s \uparrow 1} \,  \frac{\Gamma(3-\alpha)}{\alpha-1} \cdot (1-s)^{\alpha-1} L((1-s)^{-1}),  \quad   1- \Es { e^{-\lambda ({X}^{\ast}-1) } } \,\mathop{\sim}_{\lambda \downarrow 0} \,  \frac{\Gamma(3-\alpha)}{\alpha-1} \cdot \lambda^{\alpha-1} L(1/\lambda)
\end{equation}
hold in  both the cases $\sigma^{2}<\infty$ and $\sigma^{2}=\infty$ (when $\mu$ has infinite variance and $\alpha=2$ we use the fact that $L(n) \rightarrow  \infty$ as $n \rightarrow \infty$).

{\small

}
\end{document}